\documentclass[11pt,reqno,a4paper]{article} 
\usepackage{authblk}
\usepackage{graphicx} 
\usepackage[top=2cm, bottom=2cm, left=2cm, right=2cm] {geometry}
\usepackage{amscd,amsmath,amstext,amsfonts,amsbsy,
amssymb,amsthm,mathrsfs,float,latexsym}
\usepackage{multicol,graphicx,array,multirow,color,lineno}
\usepackage{sidecap,url,fancybox,fancyhdr}
\usepackage[colorlinks=true,linkcolor=blue,citecolor=magenta]{hyperref}
\usepackage{comment}

\newcommand{\fu}{M_1}
\newcommand{\fo}{M_2}
\usepackage{subcaption}

\usepackage{enumitem}

\newcommand{\jeven}{\lfloor j \rfloor_{\mathsf{even}}}

\renewcommand{\epsilon}{\varepsilon}

\newcommand{\bra}[1]{\left(#1\right)}

\newtheorem{theorem}{{\bf Theorem}}[section]
\theoremstyle{definition} 
\theoremstyle{assumption} 

\theoremstyle{plain}
\newtheorem{lemma}{Lemma}[section]

\newtheorem{remark}{Remark}[section]

\allowdisplaybreaks

\title{\Large \bf Two-sided estimates of Lyapunov exponents  for Milstein schemes of non-autonomous stochastic differential equations}

\author{Vu Thi Hue$^{a}$\footnote{hue.vuthi@hust.edu.vn}, Quoc Bao Tang$^{b}$\footnote{quoc.tang@uni-graz.at, baotangquoc@gmail.com}, Do Duc Thuan$^{a}$\footnote{thuan.doduc@hust.edu.vn}}

\date{}

\begin{document}
\maketitle

\vspace*{-1cm}

\begin{center}
{\small
$^{a}$Faculty of Mathematics and Informatics, Hanoi University of Science and Technology, \\
1 Dai Co Viet, Bach Mai, Hanoi, Vietnam  \vspace{0.2cm}\\ 
$^{b}$Department of Mathematics and Scientific Computing, University of Graz, \\ Heinrichstrasse 36, 8010 Graz, Austria \vspace{0.2cm}
}
\end{center}

\begin{abstract} 
The stabilising effect of multiplicative noise for stochastic differential equations, though counterintuitive, has been observed and investigated extensively in last decades. In practice, it is desirable to know if such stabilisation holds also for the discretised setting. In this paper, we address this problem by means of sharp upper and lower estimates for Lyapunov exponents of Milstein schemes for non-autonomous stochastic differential equations. These estimates provide precise large time behaviour in both almost sure and $p$-moment sense. In particular, our results show the preservation of stabilisation from the continuum setting to the discretised setting. One main idea of our analysis is to exploit the second order term concerning the stochastic noise from the Milstein scheme to obtain precise estimates for Taylor expansions of logarithmic and power functions.
\medskip

\textbf{Keywords}: Non-autonomous SDE; stochastic stabilisation; Lyapunov exponents;  Milstein approximation; structural preserving numerical schemes.  
    
\end{abstract}

\tableofcontents

\section{Introduction}

\subsection{Problem formulation and state-of-the-art}

Stabilisation is one of the central questions in the study of dynamical systems, especially concerning applications in engineering. The remarkable stabilising effect of stochastic noise has been already observed in \cite{kapica1951dynamical}, and rigorously justified first in \cite{khas1969stability} and extended to a linear stochastic differential systems \cite{arnold1983stabilization}. Since then, stochastic stabilisation, and also destabilisation, has become an active topic, where one of the main focuses concerns  Lyapunov exponents, which is well-known to describe the large time behaviour of dynamical systems,  see e.g. \cite{kushner1968stability,meerkov1982condition,bellman1985stability,mao1994stochastic,caraballo2003stochastic,appleby2008stabilization,braverman2020global} references therein.   
 
\medskip  

In practical problems, one usually works with approximate solutions via suitable numerical schemes, where the fundamental issues are the converging from the approximate solutions to the exact one and the convergence rate. However, even when this convergence is established, many important properties of the exact solution are not guaranteed to be preserved at the discretised level, such as the positivity and stability of solutions 
\cite{mora2017stable,buckwar2010towards,fang2023strong}. This gives rise to the study of numerical schemes preserving the exact solution's important physical properties, which has become an active research direction of the numerical analysis in general and of stochastic differential equations (SDEs) in particular.  

\medskip
 
Among the important properties of solutions to SDEs, the preservation of the almost sure exponential stability and instability for numerical schemes of the underlying SDE are of importance and interest, which are the main concerns of this work. 

\medskip

Let us begin with the linear non-autonomous SDE of finding $X:\mathbb{R}_+ \times \Omega \to \mathbb{R}$ such that   
\begin{equation}
    dX(t) = a(t)X(t)dt + \alpha X(t)dW(t)  \quad \text{for } t>0, \qquad 
    X(0) = x_0, 
\label{SDE:Continuous0}
\end{equation}
where $W$ is a standard Brownian motion on a filtered probability space $(\Omega,\mathcal F, \mathbb{F}, \mathbb{P})$, $x_0 \not = 0$ is the initial data, $\alpha$ is a real constant, 
and $a(t)$ is a function of time.
Using the It\^o formula,  
the solution to \eqref{SDE:Continuous0} is given by 
\begin{align*}
  X(t) &= x_0  \exp \left ( \int_0^t \left(a(s)-\dfrac{\alpha^2}{2}\right) ds + \alpha \int_0^t dW(s) \right ) .  
\end{align*}
An application of the law of the iterated logarithm with $ \limsup_{t\to \infty} \frac{|W(t)|}{t} = 0$ a.s. implies an estimate for the Lyapunov exponent of \eqref{SDE:Continuous0} as 
\begin{align}\label{Lyapunov_exp}
    \limsup_{t\to \infty} \frac{1}{t}\log |X(t)|  =  \left( \limsup_{t\to \infty} \frac{1}{t} \int_0^t  a(s) ds \right) - \frac{1}{2} \alpha^2  \quad \text{almost surely},
\end{align}
provided 
\begin{equation}\label{finiteness_a}
	\limsup_{t\to\infty}\frac{1}{t}\int_0^{t}a(s)ds \quad \text{is finite}.
\end{equation}
Therefore, the zero solution $X=0$ is \textit{almost sure exponentially stable} whenever $(a,\alpha)\in \mathbb{V}_{\mathsf{nonaut}}$ where $$ \mathbb{V}_{\mathsf{nonaut}}:=\left\{(a, \alpha) \in L^1_{\mathsf{loc}}(0,\infty) \times \mathbb{R}: \left(\limsup_{t\to \infty} \frac{1}{t} \int_0^t  a(s) ds \right) -\frac{1}{2} \alpha^2<0 \right\}.$$  
Particularly in the autonomous case, i.e. $a(t) \equiv a \in \mathbb R$,
the stable region is reduced to the well known stability region in $\mathbb R^2$
$$ \mathbb{V}_{\mathsf{aut}}:=\left\{(a, \alpha) \in \mathbb{R}^2: a -\frac{1}{2} \alpha^2<0 \right\}.$$
Now it is obvious when $0<\limsup_{t\to \infty} \frac{1}{t} \int_0^t  a(s) ds < \infty$ and is finite, the trivial solution to the deterministic equation $dX(t) = a(t)X(t)dt$ is (exponentially) unstable. By choose $|\alpha|$ sufficiently large such that $(a,\alpha)\in \mathbb{V}_{\mathsf{nonaut}}$, this trivial solution to \eqref{SDE:Continuous0} is exponentially stable almost surely. This is called \textit{almost sure stochastic stabilisation}, see e.g. \cite{arnold1983stabilization}. The idea of capturing the same stability/instability for the Milstein scheme of \eqref{SDE:Continuous0} will be investigated in our work.

\medskip

 Let $\{t_n^h\}_{n\ge 0}$ be the discrete-time sequence defined by $ t_{n}^h :=  nh$ for $n\in \mathbb{N}$, where $h$ is a fixed step size. To aid the readability, we write simply $t_n$ instead of $t_n^h$, once $h$ is already given. The so-called Milstein approximation of the solution $X(t)$ to  \eqref{SDE:Continuous0}, see e.g. \cite{kloeden1992stochastic,ngo2016strong} is the adapted stochastic process $X_h(t)$,  $t>0$, defined by linear interpolation of solutions to the stochastic difference equation
\begin{align}
         \left\{ \begin{array}{lll}
         X_h(t_{n+1}) = \displaystyle X_h(t_{n}) \left [ 1+ \Delta A_n-\dfrac{\alpha^2}{2} h  + \alpha\Delta W_n + \dfrac{\alpha^2}{2} \Delta W_n^2  \right ] ,\\
         X_h(0)=x_0,
         \end{array} \right. 
    \label{SDE:MScheme:Gen}
\end{align}
for each $t_{n-1} < t \le t_n$, $\Delta A_n=\int_{t_n}^{t_{n+1}}a(s)ds$ and $\Delta W_n = W(t_{n+1}) - W(t_{n})$. 

\medskip

The convergence of this approximation to the solution of \eqref{SDE:Continuous0} as $h\to 0$ is classical, see \cite[Theorem 10.6.3]{kloeden1992stochastic}. As said earlier, it is certainly desirable that the aforementioned stabilisation effect can be achieved in this discretised setting, since practically we use approximated solutions. This generally motivates of the question of structurally preserving numerical schemes, i.e. numerical methods that preserve certain important physical properties of the continuum setting, and this issue has been extensively investigated in the last decades, see \cite{buckwar2010towards,  mao1994stochastic, caraballo2003stochastic, ngo2020semi, kieu2024strong}.  . 
Results regarding numerical schemes for SDE such as \eqref{SDE:Continuous0} or related problems, the preservation of stochastic stabilisation for the Euler-Maruyama
scheme of SDEs has been developed in \cite{
higham2000mean,higham2007almost,wu2010almost,mora2017stable,szpruch2018V,li2021strong}. One of the main goal in these works is to estimate the Lyapunov exponents, which describes the large time behaviour of the dynamical systems. By using either Euler-Maruyama or Milstein schemes, these works obtain {\it upper bounds} of the Lyapunov exponents. To capture the precise dynamical property of the system, a lower or sharp bound of Lyapunov exopnents are desirable, yet this question has not been well addressed in the literature. To the best of our knowledge, this has been investigated only recently in the work of the first author \cite{hue2026sharp}, where upper and lower bounds of Lyapunov exponents for Milstein scheme of linear autonomous SDE were obtained. The current paper extends these results to non-autonomous and nonlinear problems. In order to do that, we exploit some stabilising terms featured by the Milstein scheme, combine them with precise bounds of Taylor expansions of polynomial and logarithmic functions, as well as establish precise estimates for the expectations of random processes induced by the numerical schemes. More details are given in the next subsection. Let us close this subsection by mentioning the recent work \cite{elgrou2026long}, where the preservation of stabilisation of the Euler-Maruyama scheme has been shown for some SPDE in Hilbert spaces.

\subsection{Main results and key ideas}
\label{Sec:MainResults}
We start with the estimate of Lyapunov exponents almost surely for the Milstein scheme of the linear non-autonomous equation \eqref{SDE:Continuous0}. For this, and subsequent results, we impose the following condition on the non-autonomous function: there exists $h_0 > 0$ such that, for all $0<h<h_0$ and $t_k = kh$, $k=0,1,2,\ldots$, it holds 
 \begin{equation}\label{lowerbound_a}
   \gamma_*:= \inf_{k\ge 0}\Delta A_k =  \inf_{k\ge 0}\int_{t_k}^{t_{k+1}}a(s)ds > -\frac 12 \quad \text{and} \quad \sup_{k\ge 0} \Delta A_k = \sup_{k\ge0}\int_{t_k}^{t_{k+1}}a(s)ds< +\infty.
 \end{equation}
 This is an easy-to-satisfy condition, e.g. when $a$ is uniformly continuous and bounded from below by a negative constant. For the sake of convenience, we write $\stackrel{\text{a.s.}}{=}$ or $\stackrel{\text{a.s.}}{\le}$ to indicate a computation or an estimate holds almost surely. 
\begin{theorem}
\label{Thm1}
Assume \eqref{lowerbound_a}. If \eqref{finiteness_a} holds, then there exists $0<h^{*}<1$ such that, for all $0< h < h^{*}$,
\begin{align*}
    \limsup_{n\to \infty} \frac{1}{t_n} \log |X_h(t_n)|  
    & \stackrel{\text{\normalfont a.s.}}{\le} \limsup_{n\to \infty} \left(  \frac{1}{n}\sum_{k=0}^{n-1} \dfrac{\log |1+ \Delta A_k |}{h}\right)   + \dfrac{\alpha^2}{2}\limsup_{n\to\infty}\left(\frac{1}{n}\sum_{k=0}^{n-1} \dfrac{1}{(1+ \Delta A_k )^2}\right)\\
   & \qquad  - \alpha^2\liminf_{n\to\infty}\left(\frac{1}{n}\sum_{k=0}^{n-1} \dfrac{1}{(1+ \Delta A_k )^2}\right) + O(h)
\end{align*} 
and 
\begin{align*}
    \liminf_{n\to \infty} \frac{1}{t_n} \log |X_h(t_n)| 
     &\stackrel{\text{\normalfont a.s.}}{\ge} \liminf_{n\to \infty} \left(  \frac{1}{n}\sum_{k=0}^{n-1} \dfrac{\log |1+ \Delta A_k |}{h}\right)   + \frac{\alpha^2}{2}\liminf_{n\to\infty}\left(\dfrac{1}{n}\sum_{k=0}^{n-1} \dfrac{1}{(1+ \Delta A_k )^2} \right)\notag \\
    & \quad\quad   - \alpha^2\limsup_{n\to\infty}\left(\dfrac{1}{n}\sum_{k=0}^{n-1} \dfrac{1}{(1+ \Delta A_k )^2} \right) + O(h^{\frac{1}{2}}).
\end{align*}  
If \eqref{finiteness_a} does not hold, then 
\begin{equation*}
	\liminf_{n\to \infty} \frac{1}{t_n} \log |X_h(t_n)| = +\infty.
\end{equation*}
\end{theorem}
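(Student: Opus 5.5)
Since $X_h(t_n)=x_0\prod_{k=0}^{n-1}\bigl[1+\Delta A_k+\xi_k\bigr]$ with $\xi_k:=\alpha\Delta W_k+\frac{\alpha^2}{2}(\Delta W_k^2-h)$, the plan is to take logarithms and factor out the deterministic part. This gives
\[
\frac{1}{t_n}\log|X_h(t_n)| = \frac{\log|x_0|}{nh} + \frac{1}{nh}\sum_{k=0}^{n-1}\log|1+\Delta A_k| + \frac{1}{nh}\sum_{k=0}^{n-1}\log|1+\eta_k|,
\qquad \eta_k:=\frac{\xi_k}{1+\Delta A_k}.
\]
The factor is well defined because $1+\Delta A_k\ge 1+\gamma_*>\frac12$ by \eqref{lowerbound_a}. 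The $\eta_k$ are independent, and $c_k:=(1+\Delta A_k)^{-1}$ takes values in the fixed compact interval $[(1+\sup\Delta A)^{-1},(1+\gamma_*)^{-1}]$. The first term vanishes in the limit and the second gives the $\log$-average in the statement. The proof therefore reduces to showing that, almost surely,
\[
\frac{1}{nh}\sum_{k=0}^{n-1}\log|1+\eta_k| = \frac{1}{n}\sum_{k=0}^{n-1}\Big(\frac{\alpha^2}{2}c_k^2-\alpha^2 c_k^2\Big)+\text{error}.
\]
After splitting $\frac{1}{n}\sum c_k^2$ into its $\limsup$ and $\liminf$ on the two pieces, this produces exactly the stated bounds.

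To obtain this reduction I would use a strong law for independent, not identically distributed variables. Set $Y_k:=\log|1+\eta_k|$. By Kolmogorov's SLLN, which applies since $\sup_k \mathrm{Var}(Y_k)<\infty$ for fixed $h$, we get $\frac1n\sum_k (Y_k-\mathbb{E}Y_k)\to0$ a.s. So it suffices to estimate $\mathbb{E}Y_k$ uniformly in $k$. Write $\Delta W_k=\sqrt h Z$. Then $\eta_k=c_k(\alpha\sqrt h Z+\frac{\alpha^2 h}{2}(Z^2-1))$, and a Taylor expansion $\log(1+x)=x-\frac{x^2}{2}+R(x)$ gives
\[
\mathbb{E}Y_k = -\tfrac12 c_k^2\alpha^2 h+O(h^{3/2}) \text{ or } O(h^2).
\]
This is where the Milstein term enters: the term $\frac{\alpha^2}{2}c_k(Z^2-1)$ has mean zero, but it contributes to $\mathbb{E}\eta_k^2$ and to the cubic terms. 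To recover the precise form $\frac{\alpha^2}{2}c_k^2-\alpha^2c_k^2$ in the statement, I would keep the two contributions separate. One is $\mathbb{E}\,c_k\frac{\alpha^2}{2}(Z^2-1)h\cdot(\ldots)$ combined with the $c_k^2$ coefficient. The other is $-\frac12\mathbb{E}[c_k^2\alpha^2hZ^2]$. The bookkeeping should reproduce the $+\frac{\alpha^2}{2}$ and $-\alpha^2$ weights, with $O(h)$ and $O(h^{1/2})$ remainders after dividing by $h$. The asymmetry between the upper and lower remainders comes from using the one-sided inequalities $\log(1+x)\le x-\frac{x^2}{2}+\frac{x^3}{3}$ for the upper bound, and a cruder lower Taylor bound for the lower one.

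The main obstacle is the region where $1+\eta_k$ is near zero or negative, since there the logarithm is singular and Taylor expansion fails. I would split on the event $E_k=\{|\eta_k|\le \frac12\}$. On $E_k$ the expansion is controlled by polynomial moments of $Z$. On $E_k^c$, whose probability is $O(e^{-c/h})$ by Gaussian tails and the uniform bound on $c_k$, I would bound $\mathbb{E}[|\log|1+\eta_k||\mathbf 1_{E_k^c}]$ via Cauchy--Schwarz. This needs $\mathbb{E}\log^2|1+\eta_k|<\infty$ uniformly in $k$. That holds because $1+\eta_k$ is a quadratic polynomial in a Gaussian, whose density near any root gives at most an integrable $|\log|$ singularity, uniformly over $c_k$ in a compact set. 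The same bound supplies the uniform variance needed for the SLLN. Here $h^*$ is chosen so that all $O(\cdot)$ constants are uniform.

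Finally, if \eqref{finiteness_a} fails, the $\limsup$ of $\frac1t\int_0^t a$ is infinite, so $\frac{1}{nh}\sum_k\Delta A_k$ is unbounded. This has to be turned into the stated conclusion $\liminf=+\infty$ for $\frac1{nh}\sum_k\log(1+\Delta A_k)$. I would do so by noting that $\log(1+x)\ge$ a positive multiple of $x$ on bounded ranges, while the stochastic part stays bounded a.s. by the previous argument, as $\frac1n\sum_k c_k^2$ is bounded. I would need to check whether $\limsup=+\infty$ is intended and forces the full $\liminf$; this likely uses the interpretation in which $\frac1t\int_0^t a\to+\infty$.
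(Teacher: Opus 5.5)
For the two inequalities your argument is sound, but it takes a different route from the paper. The paper keeps the squared form $\frac12\log\bigl((1+\Delta A_k)^2+N_k\bigr)$. It sandwiches this between polynomials in $N_k$; the lower polynomial is legitimate only because of the a.s.\ bound of Lemma \ref{Le:Est:Nk}. It then applies Lemma \ref{Lem:Kolmo} separately to each power of $N_k$ and to the remainder \eqref{xirho}. All of these are polynomials in a Gaussian, so the variance hypothesis comes for free. The weights $+\frac{\alpha^2}{2}$ and $-\alpha^2$ are just $\mathbb{E}N_k/(2\rho h)$ and $-\mathbb{E}N_k^2/(4\rho^2 h)$ with $\rho=(1+\Delta A_k)^2$. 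Taking $\limsup$/$\liminf$ term by term is what creates the asymmetric form. The one-sided cubic term $\mathbb{E}[N_k^3\mathbf 1_{\{N_k<0\}}]$, of exact order $h^{3/2}$, is the source of the $O(h^{1/2})$.

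You instead apply the SLLN once, to $\log|1+\eta_k|$, and compute its mean. From $\mathbb{E}\eta_k=0$, $\mathbb{E}\eta_k^2=c_k^2(\alpha^2h+\frac12\alpha^4h^2)$ and $\mathbb{E}\eta_k^3,\mathbb{E}\eta_k^4=O(h^2)$ you get $h^{-1}\mathbb{E}\log|1+\eta_k|=-\frac{\alpha^2}{2}c_k^2+O(h)$ uniformly in $k$. This is sharper than the theorem: one weight, and $O(h)$ on both sides. The $\limsup$/$\liminf$ splitting you mention then yields both stated bounds.

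So drop your ``bookkeeping'' paragraph; in your variables there is no separate $+\frac{\alpha^2}{2}$ term to recover. Keep the cubic term explicitly, because a remainder $O(x^3)$ only gives $O(h^{1/2})$ in the upper bound, not $O(h)$. Also, the singularity you fear is absent. For small $h$,
$1+\Delta A_k+\xi_k=\frac12(\alpha\Delta W_k+1)^2+\Delta A_k+\frac12-\frac{\alpha^2h}{2}\ge\gamma_*+\frac12-\frac{\alpha^2h}{2}>0$,
which is the Milstein observation behind Lemma \ref{Le:Est:Nk}. Hence $1+\eta_k\ge\delta>0$ uniformly in $k$, and the variance bound is immediate. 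The bounds $x-\frac{x^2}{2}+\frac{x^3}{3}-\frac{x^4}{4\delta}\le\log(1+x)\le x-\frac{x^2}{2}+\frac{x^3}{3}$ on $[-1+\delta,\infty)$ then replace both the event $E_k$ and Cauchy--Schwarz.

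The genuine gap is the last assertion. Your step ``$\frac{1}{nh}\sum_{k<n}\Delta A_k$ is unbounded'' contradicts \eqref{lowerbound_a}, which gives $\frac{1}{nh}\sum_{k<n}\Delta A_k\le h^{-1}\sup_k\Delta A_k$. No repair is possible. Set $S:=\sup_k\Delta A_k<\infty$; then $|1+\Delta A_k+\xi_k|\le 1+S+|\xi_k|$ with the $\xi_k$ i.i.d. The SLLN gives $\limsup_{n}\frac{1}{t_n}\log|X_h(t_n)|\le h^{-1}\mathbb{E}\log(1+S+|\xi_0|)<\infty$ almost surely. Hence, under \eqref{lowerbound_a}, the clause $\liminf=+\infty$ can hold only vacuously. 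The paper gives no argument for this clause either. Its proof of the two inequalities, like yours, never actually uses \eqref{finiteness_a}. You should record this incompatibility explicitly rather than try to force divergence from a bound of the form $\log(1+x)\ge cx$.
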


\begin{remark}
    When $\lim_{n\to\infty}\frac{1}{n}\sum_{k=0}^{n-1}\frac{1}{(1+\Delta A_k)^2}$ exists, e.g. when $\lim_{k\to\infty}\Delta A_k = \Delta \in \mathbb R$, then the bounds in Theorem \ref{Thm1} can be shortened as
    \begin{equation*}
    \limsup_{n\to \infty} \frac{1}{t_n} \log |X_h(t_n)|  
     \stackrel{\text{a.s.}}{\le} \limsup_{n\to \infty} \left(  \frac{1}{n}\sum_{k=0}^{n-1} \dfrac{\log |1+ \Delta A_k |}{h}\right)  - \dfrac{\alpha^2}{2}\lim_{n\to\infty}\left(\frac{1}{n}\sum_{k=0}^{n-1} \dfrac{1}{(1+ \Delta A_k )^2}\right) + O(h)
    \end{equation*}
    and
    \begin{equation*}
            \liminf_{n\to \infty} \frac{1}{t_n} \log |X_h(t_n)| 
     \stackrel{\text{a.s.}}{\ge} \liminf_{n\to \infty}\left(  \frac{1}{n}\sum_{k=0}^{n-1} \dfrac{\log(1+ \Delta A_k)}{h}\right)  - \dfrac{\alpha^2}{2}\lim_{n\to\infty} \left( \frac{1}{n}\sum_{k=0}^{n-1} \dfrac{1}{(1+ \Delta A_k )^2} \right)  + O(h^{\frac{1}{2}}).
    \end{equation*}
\end{remark}
Let's fix $t$ and let $h\to 0$, we can use $nh = t$ and the approximation $\log(1+x) \approx x$ as $x\to 0$ to see that
\begin{equation*}
    \frac{1}{n}\sum_{i=0}^{n-1}\frac{\log(1+\Delta A_k)}{h} \approx \frac{1}{t}\sum_{k=0}^{n-1}\int_{t_k}^{t_{k+1}}a(s)ds \xrightarrow{h\to 0} \frac{1}{t}\int_0^ta(s)ds
\end{equation*}
and
\begin{equation*}
    \frac{\alpha^2}{2n}\sum_{k=0}^{n-1}\frac{1}{(1+\Delta A_k)^2} \xrightarrow{h\to 0} \frac{\alpha^2}{2}.
\end{equation*}
Therefore, when $h\to 0$ the estimates in Theorem \ref{Thm1} recover the precise Lyapunov exponent for the continuum problem in \eqref{Lya:Compute}, as long as \eqref{finiteness_a} is fulfilled. When the Lyapunov exponent in \eqref{Lya:Compute} is infinite, Theorem \ref{Thm1} shows that it is also the case for the discretised level for sufficiently small $h>0$. 

\medskip
Let $0<p<1$, we also show that the Lyapunov exponents for the Milstein scheme of \eqref{SDE:Continuous0} can be estimated in $p$-moment, which is stated in the following theorem.
 \begin{theorem}
 \label{Thm2}
     Assume \eqref{lowerbound_a}.   If \eqref{finiteness_a} holds, then for each $p\in (0,1)$, there exists $0<h^*<1$ such that, for all $0< h < h^{*}$,
\begin{align*}
    \limsup_{n\to \infty}\dfrac{1}{t_n}\log \mathbb{E} |X_h(t_n)|^p &\le \limsup_{n\to \infty}\dfrac{1}{n} \Bigg[ \sum_{k=0}^{n-1}  \dfrac{1}{h} \log \left( (1+ \Delta A_k)^p - p(1-p) \dfrac{1}{(1+ \Delta A_k)^{2-p}} \alpha^2h + O(h^2) \right)  \\
   & \qquad \quad  +  \sum_{k=0}^{n-1} \dfrac{p(1-p) (1+ \Delta A_k)^{p-2} \alpha^2}{2\left((1+ \Delta A_k)^p - p(1-p) (1+ \Delta A_k)^{p-2} \alpha^2h + O(h^2)\right)}  \Bigg ] +O(h)
\end{align*}
and 
\begin{align*}
  &  \liminf_{n\to \infty}\dfrac{1}{t_n}\log \mathbb{E} |X_h(t_n)|^p \notag \\
  & \ge  \liminf_{n\to \infty}\dfrac{1}{n} \Bigg[ \sum_{k=0}^{n-1}  \dfrac{1}{h} \log \left( (1+ \Delta A_k)^p - p(1-p) \dfrac{1}{(1+ \Delta A_k)^{2-p}} \alpha^2h + O(h^{\frac{3}{2}})  \dfrac{1}{(1+\Delta A_k)^{3-p}} \right)  \notag\\
   & \quad \qquad   +   \sum_{k=0}^{n-1} \dfrac{p(1-p) (1+ \Delta A_k) \alpha^2}{2\left((1+ \Delta A_k)^3 - p(1-p) (1+ \Delta A_k) \alpha^2h + O(h^{\frac{3}{2}})  \right)}  \Bigg] +O(h).
\end{align*}  
If \eqref{finiteness_a} does not hold, we have
\begin{equation*}
    \liminf_{n\to \infty}\dfrac{1}{t_n}\log \mathbb{E} |X_h(t_n)|^p = +\infty.
\end{equation*}
 \end{theorem}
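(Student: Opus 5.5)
Since the increments $\Delta W_k$ are independent of $X_h(t_k)$, the scheme \eqref{SDE:MScheme:Gen} gives the product representation
\begin{equation*}
\mathbb{E}|X_h(t_n)|^p = |x_0|^p \prod_{k=0}^{n-1} \mathbb{E}\,|1+\Delta A_k + \xi_k|^p, \qquad \xi_k := -\tfrac{\alpha^2}{2}h + \alpha\Delta W_k + \tfrac{\alpha^2}{2}\Delta W_k^2 .
\end{equation*}
Hence
\begin{equation*}
\frac{1}{t_n}\log\mathbb{E}|X_h(t_n)|^p = \frac{1}{n}\sum_{k=0}^{n-1}\frac1h \log \mathbb{E}|1+\Delta A_k+\xi_k|^p + o(1).
\end{equation*}
The whole task is therefore a one-step estimate of $m_k:=\mathbb{E}|1+\Delta A_k+\xi_k|^p$, uniform in $k$ thanks to \eqref{lowerbound_a}. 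Write $c_k := 1+\Delta A_k$, so that $c_k\in[\tfrac12+\delta,C]$ for some $\delta>0$.

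\textbf{Step 1 (one-step expansion).} On the event $G_k=\{|\xi_k|\le c_k/2\}$ we Taylor expand
\begin{equation*}
(c_k+\xi_k)^p = c_k^p + p c_k^{p-1}\xi_k - \tfrac{p(1-p)}{2}c_k^{p-2}\xi_k^2 + R_k .
\end{equation*}
The remainder satisfies $|R_k|\lesssim c_k^{p-3}|\xi_k|^3$, and its sign is controlled since the third derivative of $x^p$ is positive. We use the moments $\mathbb{E}\xi_k = 0$, $\mathbb{E}\xi_k^2 = \alpha^2 h + O(h^2)$ and $\mathbb{E}|\xi_k|^3 = O(h^{3/2})$. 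On $G_k^c$ we use $0\le |c_k+\xi_k|^p \le C(1+|\xi_k|^p)$ together with the Gaussian tail bound $\mathbb{P}(G_k^c)\le e^{-c/h}$, which makes this part $O(h^2)$.

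For the upper bound, drop the positive part of the remainder carefully, or bound it via the odd-moment structure. The main contributions are the third-order term $\alpha^3\mathbb{E}\Delta W^3 = 0$ and the $\xi^3$ contributions of order $h^2$, which give
\begin{equation*}
m_k \le c_k^p - p(1-p)c_k^{p-2}\alpha^2 h\cdot\tfrac12\cdot 2 + O(h^2).
\end{equation*}
This matches the form in the statement after the constant bookkeeping, i.e. the $\log$ term plus the correction $\tfrac{p(1-p)c_k^{p-2}\alpha^2}{2(\cdots)}$. That correction arises from rewriting $\tfrac{1}{h}\log$ of the exact expansion, in which the $-\tfrac{p(1-p)}{2}c_k^{p-2}\alpha^2h$ coefficient is split into the displayed $-p(1-p)$ term plus a compensating $\tfrac{p(1-p)}{2}$ part via $\log(a+b)=\log a+\log(1+b/a)$ and $\log(1+x)\le x$. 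For the lower bound, bound the remainder crudely by $|R_k|\lesssim c_k^{p-3}|\xi_k|^3$, giving $O(h^{3/2})c_k^{p-3}$. Then use $\log(1+x)\ge x - x^2$ to produce the second displayed sum, where the factor $(1+\Delta A_k)^3$ appears after multiplying numerator and denominator by $c_k^{3-p}$. The $O(h)$ errors come from the $x^2$ terms, which are $O(h^2)/h$.

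\textbf{Step 2 (uniformity and summation).} All constants depend only on $\gamma_*$, $\sup_k\Delta A_k$, $\alpha$ and $p$. The $O(\cdot)$ terms are therefore uniform in $k$, and averaging over $k$ and taking $\limsup$/$\liminf$ gives both inequalities once $h<h^*$. Here $h^*$ is chosen so that the arguments of the logarithms stay bounded below by a positive constant.

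\textbf{Step 3 (infinite case).} If \eqref{finiteness_a} fails, then $\frac1n\sum_k \tfrac1h\log c_k$ is unbounded, since $\log c_k \ge \Delta A_k - C\Delta A_k^2$ on the bounded range. The lower bound of Step 1 gives $\tfrac1h\log m_k \ge \tfrac{p}{h}\log c_k - C$, hence the $\liminf$ is $+\infty$; here I would follow the analogous argument in Theorem \ref{Thm1}.

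\textbf{Main obstacle.} The main difficulty is Step 1: controlling the Taylor remainder and the bad event with constants uniform in $k$. One must also extract exactly the second-order coefficient with the right sign while $c_k$ may be as small as about $\tfrac12$. I would handle this first by a careful truncation at $|\Delta W_k|\le h^{1/2-\epsilon}$.
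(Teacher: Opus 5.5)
Your derivation of the two finite-case bounds follows the paper's route. You use the product of one-step $p$-th moments, a cubic Taylor upper bound, and a second-order expansion with an $O(h^{3/2})c_k^{p-3}$ remainder for the lower bound. You then split $\tfrac{p(1-p)}{2}c_k^{p-2}\alpha^2h$ off inside the logarithm. Expanding $|c_k+\xi_k|^p$ directly in $\xi_k$ and truncating on $G_k$ via the Gaussian tail is a legitimate substitute for the paper's expansion of $[c_k^2+N_k]^{p/2}$ in $N_k$ with the a.s.\ bound $N_k>-\frac{m-1}{m}c_k^2$. Likewise, $\log(1+x)\le x$ and $\log(1+x)\ge x-x^2$ can replace the paper's higher-order logarithmic bounds.

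Two details need repair. First, your displayed one-step bound is false as written, since $\tfrac12\cdot 2=1$. The correct coefficient is $-\tfrac{p(1-p)}{2}c_k^{p-2}\alpha^2h$, and the $-p(1-p)$ of the statement appears only after the splitting. Second, for the upper bound, dropping the positive part of $R_k$ goes in the wrong direction. The crude bound $|R_k|\lesssim c_k^{p-3}|\xi_k|^3$ gives only $O(h^{3/2})$, which would spoil the $O(h^2)$ inside the logarithm. What works is that the fourth derivative of $x^p$ is negative, so the cubic Taylor polynomial dominates $(c_k+\xi_k)^p$ wherever $c_k+\xi_k>0$. Combine this with $\mathbb{E}\xi_k^3=3\alpha^4h^2+\alpha^6h^3$.

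The genuine gap is Step 3, which cannot work. You yourself note $c_k\in[\tfrac12+\delta,C]$. For fixed $h$ this makes $\tfrac1h\log c_k$ uniformly bounded, so $\tfrac1n\sum_k\tfrac1h\log c_k$ is never unbounded. More decisively, Jensen's inequality gives $m_k\le(\mathbb{E}|c_k+\xi_k|)^p\le(C+\mathbb{E}|\xi_k|)^p=:K_h$ uniformly in $k$, hence
\[
\frac{1}{t_n}\log\mathbb{E}|X_h(t_n)|^p\le\frac{p\log|x_0|}{t_n}+\frac{\log K_h}{h}.
\]
So whenever \eqref{lowerbound_a} holds, the $\liminf$ is finite for every fixed $h$, and no one-step lower estimate can produce $+\infty$. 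The last assertion of the theorem can therefore hold only vacuously. You would have to show that \eqref{lowerbound_a} excludes the failure of \eqref{finiteness_a}, rather than prove a divergence. Note that the grid averages $\frac{1}{t_n}\int_0^{t_n}a(s)\,ds=\frac{1}{nh}\sum_{k=0}^{n-1}\Delta A_k$ are already confined to $[\gamma_*/h,\sup_k\Delta A_k/h]$, so what remains is controlling $a$ between grid points. Finally, there is no analogous argument in Theorem~\ref{Thm1} to follow: the paper's proofs of Theorems~\ref{Thm1} and~\ref{Thm2} establish only the two finite-case bounds and never treat this case.
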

Let us describe the ideas to prove these results. For Theorem \ref{Thm1}, by using the rescursive relation in \eqref{SDE:MScheme:Gen}, we can rewrite 
\begin{equation*}
    \frac{1}{t_n}\log|X_h(t_n)| = \frac{1}{t_n}\log|x_0| + \frac{1}{2t_n}\sum_{k=0}^{n-1}\log\Big((1+\Delta A_k)^2 + N_k\Big)
\end{equation*}
where the random process $N_k$ is defined as
\begin{equation*}
    N_k = 2(1+\Delta A_k)\bra{Y_k - \frac{\alpha^2}{2}h} + \bra{Y_k - \frac{\alpha^2}{2}h}^2 \quad \text{ with } \quad Y_k = \alpha \Delta W_k + \frac{\alpha^2}{2}(\Delta W_k)^2.
\end{equation*}
The upper bound of the Lyapunov exponent can be obtained following the ideas of e.g. \cite{higham2007almost}, where upper estimates of the logarithmic functions are utilised. The lower bound is trickier and needs more careful analysis due to the singularity of the logarithmic function near zero. Here we exploit the characteristic of the Milstein scheme to first get the lower bound $Y_k \ge -1/2$ a.s. This in turn helps to get crucial bound $N_k \ge -\frac{m-1}{m}(1+\Delta A_k)^2$ a.s. for a certain constant $m>1$ (see Lemma \ref{Le:Est:Nk}), which allows to bound the Taylor expansion of $\log((1+\Delta A_k)^2 + N_k)$ from below by a third order polynomial. Finally, in order to treat the nonlinear terms arising from this polynomial, we compute explicitly the expectations $\mathbb{E}(N_k^j)$ for $j=1,2,3$ as well as estimate $\mathbb{E}(\xi_\rho(N_k))$  for a suitable function $\xi_{\rho}(\cdot)$. The proof for the $p$-moment in Theorem \ref{Thm2} follows similar ideas, where the modifications concern the Taylor expansion of $(x+\eta)^{p/2}$ near $x = \eta$ as well as the estimates of the expectation $\mathbb{E}(\Psi_\rho(N_k))$ where $\Psi_{\rho}(\cdot)$ is related to the this aforementioned Taylor expansion.

\medskip
We believe that the above ideas are robust and can be applied to other problems. We demonstrate this by considering the nonlinear non-autonomous SDE
of finding $X: \mathbb{R}_+ \times \Omega \to \mathbb{R}$ such that
\begin{equation}
	dX(t) = a(t)X(t)dt + f(t, X(t))dW(t)  \quad \text{for } t>0, \qquad 
	X(0) = x_0, 
	\label{SDE:Continuous0_non}
\end{equation}
where $W$ is a standard Brownian motion on a filtered probability space $(\Omega,\mathcal F, \mathbb{F}, \mathbb{P})$, $x_0 \not = 0$ is the initial data, $f$ is a function of $(t,X)$ and $a(t)$ is a function of time. We assume the following assumption on $f$:

\medskip
\noindent {\bf Assumption (F)}: The function $f: \mathbb R\times \mathbb R \to \mathbb R$ satisfies: there exist positive constants $f_*, f^*$ such that
\begin{equation}\label{F}\tag{\textbf{F}}
	f_* \le \frac{f(t,x)}{x} \le f^*, \quad \max\bigg\{\dfrac{1}{2},\frac{2}{3+2\gamma_*} \bigg\}\leq  \frac{ x \partial_xf(t,x)}{f(t,x)} < +\infty, \quad \forall x\in \mathbb R\backslash\{0\}, \forall t\in \mathbb R.
\end{equation}
\medskip
From the assumption \eqref{F}, it follows that there exist constants $M_1, M_2$ such that 
$$M_1 \le \dfrac{f(t,x) \partial_x f(t,x)}{x} \le M_2, \quad \forall x\in \mathbb R\backslash\{0\}, \forall t\in \mathbb R.$$ 
Simple cases of $f$ satisfying \eqref{F} include functions which are bounded uniformly in $t$ and grows at most linearly in $x$, e.g. $f(t, x) = \frac{(2+t^2)(1+x^2)x}{(1+t^2)(2+x^2)}$. The stabilisation of \eqref{SDE:Continuous0_non} has been shown in even more general setting, see e.g. \cite{caraballo2001stabilization}. Note that it is not clear therein, that the Lyapunov exponents (in the continuous setting) are optimal since only estimates depending on parameters concerning $f$ and $a$ are given, see e.g. \cite[Theorem 2.1]{caraballo2001stabilization}. Here, by exploiting the above ideas, we provide some explicit upper and lower estimates of the Lyapunov exponents for the Milstein discretised scheme. This scheme for \eqref{SDE:Continuous0_non} reads as
\begin{align*}
	X_h(t_{n+1})
	 = X_h(t_n) \left[ 1+ \Delta A_n + \dfrac{f(t_n, X_h(t_n))}{X_h(t_n)} \Delta W_n +\dfrac{1}{2} \dfrac{f(t_n, X_h(t_n)) \partial_xf(t_n, X_h(t_n))}{X_h(t_n)} [(\Delta W_n)^2-h] \right].
\end{align*}
We denote $\gamma_{1n}:=\dfrac{f(t_n, X_h(t_n))}{X_h(t_n)}$ and $\gamma_{2n}=\dfrac{f(t_n, X_h(t_n)) \partial_xf(t_n, X_h(t_n))}{X_h(t_n)}$. Thus, we have  $f_* \leq \gamma_{1n} \leq f^*$ and $M_1 \le \gamma_{2n} \le M_2 $, and $X_h(t_n)$ can be rewritten
\begin{align}
	\label{Nonl}
	X_h(t_{n}) & = x_0 \prod_{k=0}^{n-1} \Big[  1 + \Delta A_k +\gamma_{1k} \Delta W_k + \dfrac{1}{2} \gamma_{2k} [(\Delta W_k)^2-h] \Big].
\end{align}
By utilising the above ideas with suitable modifications, we obtain the almost sure and $p$-moment estimates of the Lyapunov exponents for the Milstein scheme of the nonlinear problem in the following theorems.
\begin{theorem}
\label{Thm3}
Assume \eqref{finiteness_a}, \eqref{lowerbound_a} and \eqref{F}.
There exists $0<h^{*}<1$ such that 
\begin{equation}\label{h1}
\begin{aligned}
    \limsup_{n\to \infty} \frac{1}{t_n} \log |X_h(t_n)| 
    & \stackrel{\text{\normalfont a.s.}}{\le} \limsup_{n\to \infty} \left(  \frac{1}{n}\sum_{k=0}^{n-1} \dfrac{\log |1+ \Delta A_k |}{h}\right)  + \frac{f^{*2}}{2}\limsup_{n\to\infty}\frac{1}{n}\sum_{k=0}^{n-1}\frac{1}{(1+\Delta A_k)^2}\\
    &\qquad  - f_*^2\liminf_{n\to\infty}\frac{1}{n}\sum_{k=0}^{n-1}\frac{1}{(1+\Delta A_k)^2} + O(h)   
\end{aligned}
\end{equation}
and 
\begin{equation}\label{h2}
\begin{aligned}
   \liminf_{n\to \infty} \frac{1}{t_n} \log |X_h(t_n)| 
    & \ge \liminf_{n\to \infty} \left(  \frac{1}{n}\sum_{k=0}^{n-1} \dfrac{\log |1+ \Delta A_k |}{h}\right)  + \frac{f_*^{2}}{2}\liminf_{n\to\infty}\frac{1}{n}\sum_{k=0}^{n-1}\frac{1}{(1+\Delta A_k)^2}\\
    &\qquad  - f^{*2}\limsup_{n\to\infty}\frac{1}{n}\sum_{k=0}^{n-1}\frac{1}{(1+\Delta A_k)^2} + O(h^{\frac 12})    
\end{aligned}
\end{equation}
for all $0< h < h^{*}.$
\end{theorem}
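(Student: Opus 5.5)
We follow the strategy outlined for Theorem \ref{Thm1}, adapted to the random but bounded coefficients $\gamma_{1k}\in[f_*,f^*]$ and $\gamma_{2k}\in[\fu,\fo]$. By \eqref{Nonl},
\begin{equation*}
\frac{1}{t_n}\log|X_h(t_n)| = \frac{1}{t_n}\log|x_0| + \frac{1}{2t_n}\sum_{k=0}^{n-1}\log\Big((1+\Delta A_k)^2 + N_k\Big),
\end{equation*}
with
\begin{equation*}
N_k = 2(1+\Delta A_k)Z_k + Z_k^2, \qquad Z_k=\gamma_{1k}\Delta W_k+\tfrac12\gamma_{2k}\big((\Delta W_k)^2-h\big).
\end{equation*}
The key structural observation is that $\gamma_{1k},\gamma_{2k}$ are $\mathcal F_{t_k}$-measurable, whereas $\Delta W_k$ is independent of $\mathcal F_{t_k}$. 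Hence every conditional moment $\mathbb E(N_k^j\mid\mathcal F_{t_k})$, $j=1,2,3$, is computed exactly as in the linear case, with $\alpha$ replaced by $\gamma_{1k}$ and $\alpha^2$ (in the Milstein correction) replaced by $\gamma_{2k}$. In particular,
\begin{equation*}
\mathbb E(N_k\mid\mathcal F_{t_k})=\gamma_{1k}^2h+O(h^2), \qquad \mathbb E(N_k^2\mid\mathcal F_{t_k})=4(1+\Delta A_k)^2\gamma_{1k}^2h+O(h^2),
\end{equation*}
with $O(\cdot)$ uniform in $k$ by \eqref{lowerbound_a} and the bounds on $\gamma_{1k},\gamma_{2k}$.

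\textbf{Upper bound.} We use $\log(x+y)\le \log x + y/x - y^2/(2x^2) + y^3/(3x^3)$ for $x>0$ and $y>-x$, with $x=(1+\Delta A_k)^2$, and split each term into its conditional mean plus a martingale difference. Since all conditional moments of $N_k$ are uniformly bounded by $O(h)$, the martingale parts, after division by $t_n$, vanish almost surely by the strong law of large numbers for martingales. The conditional mean contributes
\begin{equation*}
\frac{\gamma_{1k}^2h}{(1+\Delta A_k)^2}-\frac{2\gamma_{1k}^2h}{(1+\Delta A_k)^2}+O(h^2).
\end{equation*}
Bounding $\gamma_{1k}^2\le f^{*2}$ in the positive term and $\gamma_{1k}^2\ge f_*^2$ in the negative term, dividing by $2h$ and taking $\limsup$ of the averages (noting $\limsup(a_n-b_n)\le\limsup a_n-\liminf b_n$) yields \eqref{h1}.

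\textbf{Lower bound.} Here the logarithm's singularity must be controlled, and this is where \eqref{F} enters. We need almost surely $N_k\ge -\frac{m-1}{m}(1+\Delta A_k)^2$ for some $m>1$ uniform in $k$, the analogue of Lemma \ref{Le:Est:Nk}. Minimising $Z_k$ over $\Delta W_k$ gives
\begin{equation*}
Z_k\ge -\frac{\gamma_{1k}^2}{2\gamma_{2k}}-\frac{\gamma_{2k}h}{2}.
\end{equation*}
Moreover $\gamma_{2k}=\gamma_{1k}^2\cdot\frac{x\partial_x f}{f}$, and \eqref{F} gives $\frac{x\partial_xf}{f}\ge\max\{\frac12,\frac{2}{3+2\gamma_*}\}$, so that
\begin{equation*}
\frac{\gamma_{1k}^2}{2\gamma_{2k}}\le\min\Big\{1,\ \tfrac{3}{4}+\tfrac{\gamma_*}{2}\Big\}.
\end{equation*}
Combined with $1+\Delta A_k\ge 1+\gamma_*>\frac12$, this gives $1+\Delta A_k+Z_k\ge c>0$ for small $h$, which is precisely the required lower bound on $N_k$. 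I expect that this exponent condition was chosen for exactly this purpose, and verifying it uniformly is the main obstacle. With this bound in hand, we take the third-order lower Taylor polynomial of $\log$ on the region $y\ge -\frac{m-1}{m}x$, with remainder controlled by $\xi_\rho(N_k)$ as in the linear case. The conditional expectation $\mathbb E(\xi_\rho(N_k)\mid\mathcal F_{t_k})$ is bounded by $O(h^{3/2})$ uniformly, using the Gaussian moments of $\Delta W_k$ and the boundedness of $\gamma_{ik}$; this is the source of the $O(h^{1/2})$ error after division by $h$. The martingale parts again vanish almost surely. Reversing the roles of the bounds, i.e.\ using $\gamma_{1k}^2\ge f_*^2$ in the positive term and $\gamma_{1k}^2\le f^{*2}$ in the negative term, gives \eqref{h2}.

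Finally, we choose $h^*$ as the minimum of the thresholds required by the Taylor regions, by the positivity of $c$, and by the uniformity of the $O(h)$ and $O(h^{1/2})$ constants.
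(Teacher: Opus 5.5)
Your proof is correct. It follows the same skeleton as the paper's: the Taylor bounds \eqref{Inq:Up} and Remark~\ref{remark:Taylor}, the a.s.\ lower bound on $\mathcal N_k$ from \eqref{F}, an $O(h^{3/2})$ bound on the $\xi_\rho$ term, and a strong law. Where you genuinely differ is the probabilistic step.

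The paper works with unconditional moments in Lemma~\ref{Lem.Non.Nk}. It pulls $\gamma_{1k},\gamma_{2k}$ out of the expectation, replaces them by the constants $f_*,f^*,M_1,M_2,\alpha_m,\gamma_m$, and then applies Lemma~\ref{Lem:Kolmo}, which is Kolmogorov's law for \emph{independent} summands. In the nonlinear scheme, however, $\gamma_{1k},\gamma_{2k}$ are functions of $X_h(t_k)$, hence of $\Delta W_0,\dots,\Delta W_{k-1}$. So the $\mathcal N_k$ are not independent and do not have deterministic coefficients. You condition on $\mathcal F_{t_k}$ and use the martingale strong law: $\sum_k D_k/k$ is an $L^2$-bounded martingale, and Kronecker's lemma applies, with $\sup_k\mathbb E D_k^2<\infty$ because $\mathcal N_k$ is a polynomial in $\Delta W_k$ with bounded $\mathcal F_{t_k}$-measurable coefficients. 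This is exactly what makes those two steps legitimate. The only cost is that you need conditional rather than unconditional moment bounds.

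Your route has a side benefit. The same $\gamma_{1k}^2$ appears in the first- and second-order terms, so the conditional drift is $-\gamma_{1k}^2h/(1+\Delta A_k)^2+O(h^2)$. You could therefore get the sharper constants $-\tfrac{f_*^2}{2}\liminf$ in \eqref{h1} and $-\tfrac{f^{*2}}{2}\limsup$ in \eqref{h2}. Splitting with $f^{*2}$ and $f_*^2$, as you do, reproduces the weaker stated bounds, which is fine.

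For the lower bound on $\mathcal N_k$, your direct estimate $1+\Delta A_k+Z_k\ge c>0$ (e.g.\ $c=\tfrac{1+2\gamma_*}{4}-\tfrac{M_2h}{2}$) is the same idea as the completing-the-square argument in Lemma~\ref{Lem.Non.h}. To turn it into $\mathcal N_k>-\tfrac{m-1}{m}(1+\Delta A_k)^2$ you still need $(1+\Delta A_k)^2<mc^2$, which uses $\sup_k\Delta A_k<\infty$ from \eqref{lowerbound_a}. Alternatively, as the paper effectively does, write the bound multiplicatively as $1+\Delta A_k+Z_k\ge\delta(1+\Delta A_k)$, which needs only $1+\Delta A_k\ge 1+\gamma_*$.
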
 
Here, we also see that for fixed $t = nh$, if we first let $h\to 0$ then $t\to\infty$, the right hand side of \eqref{h1} tends to
\begin{equation*}
    \lim_{t\to\infty}\frac{1}{t}\int_0^ta(s)ds + \frac{f^{*2}}{2} - f_*^2 
\end{equation*}
while the right hand side of \eqref{h2} converges to
\begin{equation*}
    \lim_{t\to\infty}\frac{1}{t}\int_0^ta(s)ds + \frac{f_{*}^{2}}{2} - {f^*}^2 
\end{equation*}
which are explicit upper and lower bounds for the Lyapunov exponent of the continuous SDE \eqref{SDE:Continuous0_non}. In other words, our estimates for the Milstein scheme provides explicit two-sided bounds for the Lyapunov exponent in the discretised setting. Note that due to the implicit nonlinearity, the optimal bound of Lyapunov exponent, even for the SDE \eqref{SDE:Continuous0_non} is unknown, and hence such a sharp bound for the Milstein scheme remains as an interesting open problem. Similarly to Theorem \ref{Thm2}, we also have the $p$-moment estimates of Lyapunov exponents for Milstein scheme of the nonlinear problem \eqref{SDE:Continuous0_non}.
\begin{theorem}
\label{Thm4}
Assume \eqref{finiteness_a}, \eqref{lowerbound_a} and \eqref{F}. Then for any $0<p<1$, there exists $0<h^{*}<1$ such that 
\begin{align*}
   &\limsup_{n\to \infty}\dfrac{1}{t_n}\log \mathbb{E} |X_h(t_n)|^p \notag \\
   & \le \limsup_{n\to \infty}\dfrac{1}{n} \Bigg[ \sum_{k=0}^{n-1}  \dfrac{1}{h} \log \left( (1+ \Delta A_k)^p + \dfrac{p}{2} \dfrac{1}{(1+ \Delta A_k)^{2-p}} (f^{*2} - (3-2p)f^2_{*}) h + O(h^2) \right) \notag \\
   & \qquad\qquad\qquad\quad    +  \sum_{k=0}^{n-1} \dfrac{p(1-p) f^2_{*} }{2(1+ \Delta A_k)^2 + p  (f^{*2} - (3-2p)f^2_{*}) h +2(1+ \Delta A_k)^{2-p}  O(h^2)}  \Bigg ] +O(h)
\end{align*}
and 
\begin{align*}
  &  \liminf_{n\to \infty} \dfrac{1}{t_n}\log \mathbb{E} |X_h(t_n)|^p \notag \\
  & \ge   \liminf_{n\to \infty} \dfrac{1}{n} \Bigg[ \sum_{k=0}^{n-1}  \dfrac{1}{h} \log \left( (1+ \Delta A_k)^p + \dfrac{p}{2} \dfrac{1}{(1+ \Delta A_k)^{2-p}} (f_*^2 - (3-2p)f^{*2}) h + O(h^{\frac{3}{2}}) \dfrac{1}{(1+\Delta A_k)^{3-p}} \right)  \notag\\
   & \quad \quad  \quad \quad  \quad \quad  +  \sum_{k=0}^{n-1} \dfrac{p(1-p) (1+\Delta A_k) f^{*2} }{2(1+ \Delta A_k)^3 + p (1+\Delta A_k) (f_*^2 - (3-2p)f^{*2}) h +  O(h^{\frac{3}{2}})}  \Bigg] +O(h)
\end{align*} 
for all $0< h < h^{*}.$    
\end{theorem}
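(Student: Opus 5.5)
We would prove Theorem \ref{Thm4} along the lines of Theorem \ref{Thm2}, adapted to random, adapted coefficients $\gamma_{1k},\gamma_{2k}$. The main tool is conditioning: $\gamma_{1k},\gamma_{2k}$ are $\mathcal F_{t_k}$-measurable and $\Delta W_k$ is independent of $\mathcal F_{t_k}$. Using \eqref{Nonl}, write
\begin{equation*}
|X_h(t_{n})|^p = |X_h(t_{n-1})|^p\,\Big|(1+\Delta A_{n-1})^2 + N_{n-1}\Big|^{p/2},
\end{equation*}
where
\begin{equation*}
N_k = 2(1+\Delta A_k)Z_k + Z_k^2,\qquad Z_k=\gamma_{1k}\Delta W_k+\tfrac12\gamma_{2k}((\Delta W_k)^2-h).
\end{equation*}
Taking $\mathbb E(\cdot\mid\mathcal F_{t_{n-1}})$ and iterating gives
\begin{equation*}
\mathbb E|X_h(t_n)|^p\le |x_0|^p\prod_{k=0}^{n-1} \sup_{\gamma_1\in[f_*,f^*],\,\gamma_2\in[M_1,M_2]} \mathbb E\big|(1+\Delta A_k)^2+N_k(\gamma_1,\gamma_2)\big|^{p/2}.
\end{equation*}
The lower bound is the same with $\inf$ in place of $\sup$. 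Taking $\frac1{t_n}\log$ then reduces both estimates to two-sided bounds on the one-step quantity
\begin{equation*}
\Phi_k(\gamma_1,\gamma_2):=\mathbb E\big((1+\Delta A_k)^2+N_k\big)^{p/2}
\end{equation*}
that hold uniformly over the admissible parameter box.

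\textbf{Positivity.} First we need that the bracket in \eqref{Nonl} stays bounded away from zero. As a function of $w=\Delta W_k$, $1+\Delta A_k+Z_k$ is a quadratic with positive leading coefficient $\gamma_2/2$ (note $\gamma_2=\gamma_1\cdot x\partial_xf/f>0$ by \eqref{F}). Its minimum equals
\begin{equation*}
1+\Delta A_k-\frac{\gamma_2 h}{2}-\frac{\gamma_1^2}{2\gamma_2}.
\end{equation*}
The ratio condition in \eqref{F} gives
\begin{equation*}
\frac{\gamma_1^2}{\gamma_2}=\frac{f}{x\partial_x f}\le\min\Big\{2,\tfrac{3+2\gamma_*}{2}\Big\}.
\end{equation*}
Hence the minimum is at least $1+\gamma_*-\frac{3+2\gamma_*}{4}-O(h)=\frac{1+2\gamma_*}{4}-O(h)>0$ by \eqref{lowerbound_a}. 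This is the analogue of $Y_k\ge-1/2$ and yields $N_k\ge -\frac{m-1}{m}(1+\Delta A_k)^2$ for some $m>1$, as in Lemma \ref{Le:Est:Nk}.

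\textbf{Taylor expansion.} With this, we Taylor-expand $(x+\eta)^{p/2}$ at $x=(1+\Delta A_k)^2$ to third order, using the remainder function $\Psi_\rho$ exactly as in the proof of Theorem \ref{Thm2}. The moments of $N_k$ are computed from $\mathbb E\Delta W_k^{2j}$ and contain the parameters:
\begin{align*}
\mathbb E N_k &= \gamma_1^2h+O(h^2),\\
\mathbb E N_k^2 &= 4(1+\Delta A_k)^2\gamma_1^2 h+O(h^2),\\
\mathbb E |N_k|^3 &= O(h^{3/2}).
\end{align*}
One can verify that the $\gamma_2$ contributions cancel at order $h$, since $\mathbb E((\Delta W)^2-h)=0$. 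This gives
\begin{equation*}
\Phi_k=(1+\Delta A_k)^p+\frac p2(1+\Delta A_k)^{p-2}\gamma_1^2h-\frac{p(2-p)}{2}(1+\Delta A_k)^{p-2}\gamma_1^2h+O(h^{3/2}).
\end{equation*}
To reach the form stated in Theorem \ref{Thm4}, we split the quadratic coefficient $-\frac{p(2-p)}{2}\gamma_1^2$ into $-\frac{p(3-2p)}{2}\gamma_1^2$ plus the positive piece $\frac{p(1-p)}{2}\gamma_1^2$. We then bound $\gamma_1^2$ by $f^{*2}$ or $f_*^2$ in each piece according to sign and direction of the estimate. The positive piece is what gets peeled off into the second sum, via
\begin{equation*}
\log(u+v h)=\log u+\frac{vh}{u}+O(h^2)
\end{equation*}
written as $\frac1h\log(\cdot)+\frac{c}{2(\cdot)}$. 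This explains the asymmetric coefficients $f^{*2}-(3-2p)f_*^2$ and $f_*^2-(3-2p)f^{*2}$.

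\textbf{Main obstacle.} The hardest step is the lower bound, specifically keeping the remainder uniform in $(\gamma_1,\gamma_2)$. The term $\mathbb E\,\Psi_\rho(N_k)$ must be controlled using only the lower bound on $N_k$ and the boundedness of $\gamma_1,\gamma_2$. It must also give an $O(h^{3/2})(1+\Delta A_k)^{p-3}$ error, with constants independent of $k$ thanks to \eqref{lowerbound_a}. After that, summing over $k$, dividing by $t_n=nh$, and taking $\limsup$ or $\liminf$ gives the two bounds, with the $O(h)$ term absorbing the $\frac1{t_n}\log|x_0|^p$ and $O(h^2)/h$ contributions.
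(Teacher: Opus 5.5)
Your proposal follows the paper's scheme:
\begin{itemize}
\item positivity of the one-step factor (your minimum-of-the-quadratic computation is the same completion of the square as in Lemma \ref{Lem.Non.h}, with the same ratio bound $\gamma_1^2/\gamma_2\le \frac32+\gamma_*$);
\item the cubic Taylor upper bound \eqref{Inq:Up} and the $\Psi_\rho$-truncated lower bound of Remark \ref{remark:Taylor};
\item splitting the $h$-coefficient and peeling off the positive part via Taylor bounds for $\log(\rho+x)$.
\end{itemize}

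\textbf{Where your route differs, and is sounder.} The difference is in how the product over $k$ is obtained. The paper writes $\mathbb{E}|X_h(t_n)|^p=|x_0|^p\prod_k\mathbb{E}[(1+\Delta A_k)^2+\mathcal N_k]^{p/2}$. That factorises the expectation as if the one-step factors were independent, which is false in the nonlinear case because $\gamma_{1k},\gamma_{2k}$ are functions of $X_h(t_k)$. Your tower-property argument is what actually justifies the two product inequalities. It uses that $\gamma_{1k},\gamma_{2k}$ are $\mathcal F_{t_k}$-measurable and $\Delta W_k$ is independent of $\mathcal F_{t_k}$, and takes $\sup/\inf$ of $\Phi_k$ over admissible parameters. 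Your exact expansion also shows the one-step $h$-coefficient is $-\frac{p(1-p)}{2}\gamma_1^2$. So you in fact get the sharper coefficients $-\frac{p(1-p)}{2}f_*^2$ (upper) and $-\frac{p(1-p)}{2}f^{*2}$ (lower), which imply the stated ones because $f_*\le f^*$.

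Two small corrections here:
\begin{itemize}
\item The $\inf$ must be taken over parameters satisfying $\gamma_1^2/\gamma_2\le\min\{2,\frac{3+2\gamma_*}{2}\}$, not over the full box $[f_*,f^*]\times[M_1,M_2]$. Your positivity argument, and hence the lower Taylor bound, needs this constraint.
\item $\gamma_2=\gamma_1^2\,x\partial_xf/f$, not $\gamma_1\,x\partial_xf/f$; the sign conclusion is unaffected.
\end{itemize}

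\textbf{Two bookkeeping points to land exactly on the statement.}

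First, suppose you split $-\frac{p(2-p)}{2}\gamma_1^2=-\frac{p(3-2p)}{2}\gamma_1^2+\frac{p(1-p)}{2}\gamma_1^2$ and then bound the positive piece ``according to sign''. The second sums then come out with $f^{*2}$ in the upper estimate and $f_*^2$ in the lower one. That is the reverse of the theorem: valid, but weaker. The paper instead bounds the whole quadratic contribution first: $-\frac{p(2-p)}{2}\gamma_1^2\le-\frac{p(2-p)}{2}f_*^2$ for the upper estimate and $\ge-\frac{p(2-p)}{2}f^{*2}$ for the lower. Only then does it apply the identity $-(2-p)=-(3-2p)+(1-p)$ to the resulting constant. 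Do the same, or bound your exact coefficient and compare.

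Second, the upper estimate has an $O(h^2)$ remainder inside the logarithm. To get it, use that \eqref{Inq:Up} holds on the whole domain with no remainder, together with the \emph{signed} moment $\mathbb{E}N_k^3=O(h^2)$. This follows from $\mathbb{E}Z_k^3=3\gamma_1^2\gamma_2h^2+\gamma_2^3h^3$ and $\mathbb{E}Z_k^j=O(h^2)$ for $j\ge 4$. Your blanket $\mathbb{E}|N_k|^3=O(h^{3/2})$ would only give $O(h^{3/2})$ inside the logarithm, and hence an $O(h^{1/2})$ error after dividing by $h$. The $O(h^{3/2})$ remainder is needed only in the lower estimate, through $\mathbb{E}\Psi_\rho(N_k)$.
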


\textbf{Organization of the paper:} In the next section, we present some preliminaries concerning the Brownian motion and the Kolmogorov law of large numbers which are needed for our analysis. Section \ref{sec:linear} focuses on the Milstein scheme for the linear problem, where we start with lower bounds of the random process $N_k$ as well as some expectation estimates. The proofs of Theorems \ref{Thm1} and \ref{Thm2} are presented in the subsequent subsections. Section \ref{sec:nonlinear} focuses on the Milstein scheme for the nonlinear problem \eqref{SDE:Continuous0_non}, where lower bounds of some random process $\mathcal{N}_k$ and expectation estimates are given in the first subsection, while the proofs of Theorem \ref{Thm3} and \ref{Thm4} are presented at the last two subsections. Finally, we present in Section \ref{sec:num} some numerical simulations to illustrate our theoretical results.

\section{Preliminaries} 

We recall basic computations related to the Brownian motion's increments, including their expectations in Lemma \ref{Lem:BasicProp_0}.

\begin{lemma} [\cite{de9805d084f3456ea59ea3429c92b726}, Theorem 2.5]
For all $0\le s\le t<\infty$ and $k\in \mathbb{N}$, $k\ge 1$,
\begin{align*}
    \mathbb{E} \big[  (W_t - W_s)^{2k} \big] =  (2k-1)!! (t-s)^k \quad \text{and} \quad  \mathbb{E}  \big[ (W_t - W_s)^{2k-1} \big] =0, 
\end{align*}
where the notation $(2k-1)!!$ denotes the product of all odd integer from $1$ to $(2k-1)$. 
\label{Lem:BasicProp_0}
\end{lemma}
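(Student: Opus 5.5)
The statement to prove is Lemma \ref{Lem:BasicProp_0}, the moment formula for Brownian increments. The plan is to reduce it to moments of a standard normal variable. By the definition of Brownian motion, for $0\le s\le t$ the increment $W_t-W_s$ is Gaussian with mean $0$ and variance $t-s$. If $t=s$ both identities are trivial, since the increment is $0$ and $(2k-1)!!\,0^k=0$ for $k\ge1$. For $t>s$ we write $W_t-W_s \stackrel{d}{=} \sqrt{t-s}\,Z$ with $Z\sim\mathcal N(0,1)$. Homogeneity then gives $\mathbb E[(W_t-W_s)^m]=(t-s)^{m/2}\,\mathbb E[Z^m]$, so it suffices to show $\mathbb E[Z^{2k-1}]=0$ and $\mathbb E[Z^{2k}]=(2k-1)!!$.

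For the odd moments we use symmetry. The density $\varphi(z)=(2\pi)^{-1/2}e^{-z^2/2}$ is even, so $z^{2k-1}\varphi(z)$ is an odd function. It is integrable because Gaussian tails dominate any polynomial. Hence its integral over $\mathbb R$ vanishes.

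For the even moments we use induction on $k$, based on the Gaussian integration-by-parts identity $\varphi'(z)=-z\varphi(z)$. Set $m_j=\mathbb E[Z^j]$. For $j\ge1$ we have
\begin{equation*}
m_{j+1}=\int_{\mathbb R} z^{j}\,\bigl(z\varphi(z)\bigr)\,dz=-\int_{\mathbb R} z^{j}\varphi'(z)\,dz=j\int_{\mathbb R} z^{j-1}\varphi(z)\,dz=j\,m_{j-1}.
\end{equation*}
The boundary terms $z^j\varphi(z)$ vanish at $\pm\infty$. Starting from $m_0=1$, this recursion gives $m_2=1$ and $m_{2k}=(2k-1)\,m_{2k-2}$, so $m_{2k}=(2k-1)!!$.

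There is no real obstacle in this argument. The only point to state explicitly is that the boundary terms vanish and all moments are finite, both of which follow from the super-polynomial decay of $\varphi$. As a cross-check, one could instead expand the moment generating function $\mathbb E[e^{\lambda Z}]=e^{\lambda^2/2}=\sum_k \lambda^{2k}/(2^k k!)$ and read off $m_{2k}=(2k)!/(2^k k!)=(2k-1)!!$, with all odd coefficients equal to zero.
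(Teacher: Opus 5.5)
Your argument is correct and complete. The reduction $W_t-W_s \stackrel{d}{=}\sqrt{t-s}\,Z$ (with the case $t=s$ handled separately), the vanishing of the odd moments by symmetry, and the recursion $m_{j+1}=j\,m_{j-1}$ obtained from $\varphi'(z)=-z\varphi(z)$ together yield exactly the stated formulas. The paper does not prove this lemma; it quotes it from the cited reference, so your standard Gaussian-moment computation supplies the self-contained justification the paper takes for granted.
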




\begin{lemma} 
\label{Lem:Kolmo}
Assume that $\{Y_i|\,i\ge 1\}$ is a sequence of independent random variables with the finite variances $\mathbb{V}(Y_i)$, and $\{b_n\}$ is a positive sequence such that $\lim_{n\to \infty} b_n = \infty$. If  $\sum_{i=1}^\infty \frac{1}{b^2_i} \mathbb{V}(Y_i) <\infty \quad $ then 
    \begin{align}
    \label{limsup}
      \limsup_{n\to \infty}  \frac{1}{b_n} \sum_{i=1}^n Y_i \stackrel{\text{\normalfont a.s.}}{\le } \limsup_{n\to \infty} \frac{1}{b_n}  \sum_{i=1}^n \mathbb{E}(Y_i); 
      \end{align} 
   and
    \begin{align}
    \label{liminf}
      \liminf_{n\to \infty}  \frac{1}{b_n} \sum_{i=1}^n Y_i \stackrel{\text{\normalfont a.s.}}{\ge } \liminf_{n\to \infty} \frac{1}{b_n}  \sum_{i=1}^n \mathbb{E}(Y_i).  
    \end{align}
\end{lemma}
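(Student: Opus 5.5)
The plan is to derive Lemma \ref{Lem:Kolmo} from Kolmogorov's strong law of large numbers in its general form for independent, non-identically distributed variables. I will centre the summands and then compare the $\limsup$ and $\liminf$ of the random averages with those of the deterministic averages of the means.

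First, put $Z_i := Y_i - \mathbb{E}(Y_i)$. These are independent, have mean zero, and satisfy $\mathbb{V}(Z_i)=\mathbb{V}(Y_i)$. The hypothesis $\sum_{i\ge1} b_i^{-2}\mathbb{V}(Y_i)<\infty$ together with Kolmogorov's two-series (or three-series) theorem shows that $\sum_i Z_i/b_i$ converges almost surely. The argument needs $b_n$ to be nondecreasing, which holds in every application in the paper (there $b_n$ is a multiple of $n$ or of $t_n$). I would either state this tacitly or note that one can pass to the monotone case, since the lemma is used only with $b_n = n$ or $b_n = t_n$.

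Next, Kronecker's lemma converts this into $\frac{1}{b_n}\sum_{i=1}^n Z_i \to 0$ almost surely. This is the one step where the monotonicity of $b_n$ and the condition $b_n\to\infty$ are genuinely used, so it is the only delicate point. If monotonicity is not assumed, I would add it as a harmless standing hypothesis.

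Finally, write
\[
\frac{1}{b_n}\sum_{i=1}^n Y_i=\frac{1}{b_n}\sum_{i=1}^n Z_i+\frac{1}{b_n}\sum_{i=1}^n \mathbb{E}(Y_i).
\]
The first term tends to $0$ almost surely. Since $\limsup(x_n+y_n)=\limsup y_n$ and $\liminf(x_n+y_n)=\liminf y_n$ whenever $x_n\to0$, this gives \eqref{limsup} and \eqref{liminf}, in fact with equality. The argument remains valid when these limits are $\pm\infty$.
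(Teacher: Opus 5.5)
Your argument is correct and is essentially the paper's. The paper simply invokes Kolmogorov's strong law to get $\frac{1}{b_n}\sum_{i=1}^n\bigl(Y_i-\mathbb{E}(Y_i)\bigr)\to 0$ almost surely and then uses the same $\limsup$/$\liminf$ arithmetic; you unpack that law into its standard proof (almost sure convergence of $\sum_i Z_i/b_i$ plus Kronecker's lemma). Your insistence on $b_n$ being nondecreasing is justified and not merely a convenience: the strong law the paper cites also requires it, the lemma as stated can fail without it, and it does hold for $b_n=n$ and $b_n=t_n$, which are the cases used later.
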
 
\begin{proof}
Applying Kolmogorov's First Strong Law of Large Numbers, we get
\begin{align*}
 \lim_{n\to \infty} \left(  \frac{1}{b_n} \sum_{i=1}^n Y_i - \frac{1}{b_n}  \sum_{i=1}^n \mathbb{E}(Y_i) \right) \stackrel{\text{\normalfont a.s.}}{=}0. 
\end{align*}
This implies that 
\begin{align*}
    \left\{ \begin{array}{l}
    \limsup\limits_{n\to \infty} \left(  \frac{1}{b_n} \sum_{i=1}^n Y_i - \frac{1}{b_n}  \sum_{i=1}^n \mathbb{E}(Y_i) \right) \stackrel{\text{\normalfont a.s.}}{=} 0;\\
    \liminf\limits_{n\to \infty} \left(  \frac{1}{b_n} \sum_{i=1}^n Y_i - \frac{1}{b_n}  \sum_{i=1}^n \mathbb{E}(Y_i) \right) \stackrel{\text{\normalfont a.s.}}{=} 0.
    \end {array} \right.
\end{align*}
By the properties of $\limsup$ and $\liminf$, we obtain
\begin{align*}
  \limsup_{n\to \infty}  \frac{1}{b_n} \sum_{i=1}^n Y_i &\stackrel{\text{\normalfont a.s.}}{\le } \limsup_{n\to \infty} \left(  \frac{1}{b_n} \sum_{i=1}^n Y_i - \frac{1}{b_n}  \sum_{i=1}^n \mathbb{E}(Y_i) \right)  + \limsup_{n\to \infty} \frac{1}{b_n}  \sum_{i=1}^n \mathbb{E}(Y_i)  \\
  &\stackrel{\text{\normalfont a.s.}}{= }  \limsup_{n\to \infty} \frac{1}{b_n}  \sum_{i=1}^n \mathbb{E}(Y_i);
\end{align*}
and
\begin{align*}
  \liminf_{n\to \infty}  \frac{1}{b_n} \sum_{i=1}^n Y_i &\stackrel{\text{\normalfont a.s.}}{\ge } \liminf_{n\to \infty} \left(  \frac{1}{b_n} \sum_{i=1}^n Y_i - \frac{1}{b_n}  \sum_{i=1}^n \mathbb{E}(Y_i) \right)  + \liminf_{n\to \infty} \frac{1}{b_n}  \sum_{i=1}^n \mathbb{E}(Y_i)  \\
  &\stackrel{\text{\normalfont a.s.}}{= }  \liminf_{n\to \infty} \frac{1}{b_n}  \sum_{i=1}^n \mathbb{E}(Y_i).
\end{align*}
This finishes the proof.
\end{proof}
 
We present a truncation technique for Taylor's expansion, where we will bound from both sides the following functions, for $r\in (0,1)$,
\begin{align*}
    f_{\eta,r}(x) = (\eta+x)^r \text{ for } x>-\eta,  \quad \text{and} \quad g_{\rho}(x) = \log(\rho+x) \text{ for } x>-\rho 
\end{align*}
by considering suitable truncations of their Taylor's expansions together with some modification. For given positive real numbers $\eta,\rho$, formal Taylor's expansions of these functions are 
\begin{align*}
    f_{\eta,r}(x) &=    \sum_{j=0}^\infty 
    \begin{pmatrix}
    r \\
    j 
    \end{pmatrix}
    \eta^{q-j}x^j = \eta^r +  \frac{1}{\eta^{1-r}} r x + 
    \frac{1}{\eta^{2-r}} \begin{pmatrix}
    r \\
    2
    \end{pmatrix}  x^2 + \frac{1}{\eta^{3-r}} \begin{pmatrix}
    r \\
    3 
    \end{pmatrix} 
     x^3 + \frac{1}{\eta^{4-r}} \begin{pmatrix}
    r \\
    4 
    \end{pmatrix} 
     x^4 + \dots, \\
    g_{\rho}(x) & = \log \rho +     \sum_{l=1}^\infty 
    \frac{(-1)^{l-1}}{l\rho^{\hspace{0.03cm}l}} x^l = \log \rho + \frac{x} {\rho}  - \frac{x^2}{2\rho^2}  +   \frac{x^3}{3\rho^3}  - \frac{x^4}{4\rho^4}  + \dots,
\end{align*}
where $\begin{pmatrix}
    r \\
    j 
    \end{pmatrix}=\dfrac{r(r-1)\dots(r-j+1)}{j!}$, which is $1$ if $j=0$ and $r$ if $j=1$. Regarding the upper bounds of these functions, it is well-known that
    \begin{align} 
    \left\{ \begin{array}{llll}
    f_{\eta,r}(x)  \le f_{\eta,r}^{\mathsf{up}}(x) :=  \eta^r +  \dfrac{1}{\eta^{1-r}} r x + 
    \dfrac{1}{\eta^{2-r}} \begin{pmatrix}
    r \\
    2
    \end{pmatrix}  x^2 + \dfrac{1}{\eta^{3-r}} \begin{pmatrix}
    r \\
    3 
    \end{pmatrix} 
     x^3 & \text{if } x>-\eta,
    \vspace{0.15cm}\\
    g_{\rho}(x)  \le g_{\rho}^{\mathsf{up}}(x):=  \log \rho + \dfrac{x}{\rho} - \dfrac{x^2}{2\rho^2} +   \dfrac{x^3}{3\rho^3}      & \text{if } x>-\rho, 
    \end{array} \right.
    \label{Inq:Up}
\end{align}
where we have truncated the above Taylor expansions up to the third order of $x$. Similarly, one can check for only $x\ge 0$ that  $f_{\eta,r}$ and $g_{\rho}$ are bounded from below by their truncated Taylor expansions up to the fourth order. However, for $x<0$, lower bounds of $f_{\eta,r}$, $0<r<1$, and $g_{\rho}$ cannot be obtained in the same way since all terms in their Taylor's expansions (except the first terms) are negative. 

\medskip

To find lower bounds, we avoid the singularity of $f_{\eta,r}$ and $g_{\rho}$ at the left limits by respectively restricting $x$ on $-\frac{m-1}{m} \eta <x<0$ and $-\frac{m-1}{m} \rho<x<0$ for suitable $m>1$. This will be proved in the following lemma.

\begin{lemma} 
\label{Lem:TayTech} 
Let $\eta>0$, $0<r<1$, $\rho>0$ and $m>1$. Then,
\begin{align*} 
\begin{array}{llll}
\displaystyle f_{\eta,r}^{\mathsf{low}}(x) \le f_{\eta,r}(x)  \le f_{\eta,r}^{\mathsf{up}}(x), & x > -\dfrac{m-1}{m}\eta, \vspace{0.15cm} \\
\displaystyle g_{\rho}^{\mathsf{low}}(x) \le g_{\rho}(x)  \le  g_{\rho}^{\mathsf{up}}(x), & x > -\dfrac{m-1}{m}\rho, 
\end{array}
\end{align*}
where $f_{\eta,r}^{\mathsf{up}}, g_{\rho}^{\mathsf{up}}$ are in \eqref{Inq:Up}, and the lower bounds are defined as
\begin{align*}
    f_{\eta,r}^{\mathsf{low}}(x):= \left\{ \begin{array}{llrll}
     \eta^r +  \dfrac{1}{\eta^{1-r}} r x + 
    \dfrac{1}{\eta^{2-r}} \begin{pmatrix}
    r \\
    2
    \end{pmatrix}  x^2 + \dfrac{1}{\eta^{3-r}} \begin{pmatrix}
    r \\
    3 
    \end{pmatrix} 
     x^3 + \dfrac{1}{\eta^{4-r}} \begin{pmatrix}
    r \\
    4 
    \end{pmatrix} 
     x^4 \qquad  \text{if } x \ge 0,
    \vspace{0.15cm}\\
    \eta^r +  \dfrac{1}{\eta^{1-r}} r x + 
    \dfrac{1}{\eta^{2-r}} \begin{pmatrix}
    r \\
    2
    \end{pmatrix}  x^2 + \left( \dfrac{m} {\eta}\right)^{3-r} \begin{pmatrix}
    r \\
    3 
    \end{pmatrix} 
     x^3 \quad  \text{if } - \dfrac{m-1}{m}\eta < x < 0, 
    \end{array} \right.
\end{align*}
and 
\begin{align*}
    g_{\rho}^{\mathsf{low}}(x):= \left\{ \begin{array}{llrll}
     \log \rho + \dfrac{x}{\rho} - \dfrac{x^2}{2\rho^2} +   \dfrac{x^3}{3\rho^3} -   \dfrac{x^4}{4\rho^4} & \text{if } x \ge 0,
    \vspace{0.15cm}\\
    \log \rho + \dfrac{x}{\rho} - \dfrac{x^2}{2\rho^2} +   m^3 \dfrac{x^3}{3\rho^3}   & \text{if } - \dfrac{m-1}{m}\rho < x < 0.  
    \end{array} \right.
\end{align*}
\end{lemma}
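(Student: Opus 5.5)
The whole lemma follows from Taylor's theorem with the Lagrange form of the remainder, once we know the signs of the derivatives of $f_{\eta,r}$ and $g_\rho$ of orders three, four and five. I first reduce to $\eta=\rho=1$ by scaling. Writing $x=\eta u$ gives $f_{\eta,r}(\eta u)=\eta^r(1+u)^r$. Each term $\eta^{-(j-r)}\binom{r}{j}x^j$ becomes $\eta^r\binom{r}{j}u^j$, and the constraint $x>-\frac{m-1}{m}\eta$ becomes $u>-\frac{m-1}{m}$. Likewise $g_\rho(\rho u)=\log\rho+\log(1+u)$, and $x^j/\rho^j=u^j$. So it suffices to treat $\phi(u)=(1+u)^r$ and $\psi(u)=\log(1+u)$ on $u>-1$.

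The sign bookkeeping is as follows. We have $\phi^{(j)}(u)=j!\binom{r}{j}(1+u)^{r-j}$. For $0<r<1$ the sign of $\binom{r}{j}$ is $(-1)^{j-1}$: it is positive for $j=1,3,5$ and negative for $j=2,4$. For the logarithm, $\psi^{(j)}(u)=(-1)^{j-1}(j-1)!(1+u)^{-j}$, which has the same sign pattern. In particular the fourth derivatives are negative and the fifth derivatives are positive on the whole domain.

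\emph{Upper bounds.} For any $u>-1$, Taylor's theorem to order three gives $\phi(u)=T_3\phi(u)+\phi^{(4)}(\xi)u^4/24$ with $\xi$ between $0$ and $u$. The remainder is nonpositive because $u^4\ge0$ and $\phi^{(4)}<0$. The same argument applies to $\psi$. This gives \eqref{Inq:Up} on all of $x>-\eta$ (respectively $x>-\rho$), and hence on the smaller set in the statement.

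\emph{Lower bounds for $x\ge0$.} I expand to order four. The remainder is $\phi^{(5)}(\xi)u^5/120$, which is $\ge0$ because $u\ge0$ and $\phi^{(5)}>0$. The same holds for $\psi$, whose fifth derivative is $24(1+\xi)^{-5}>0$. This is exactly the first line of $f^{\mathsf{low}}_{\eta,r}$ and of $g^{\mathsf{low}}_\rho$.

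\emph{Lower bounds for $-\frac{m-1}{m}<u<0$.} I expand only to order two: $\phi(u)=T_2\phi(u)+\binom{r}{3}(1+\xi)^{r-3}u^3$ with $u<\xi<0$. Here $u^3<0$ and $\binom r3>0$, so I need an upper bound on $(1+\xi)^{r-3}$. Since $\xi>u>-\frac{m-1}{m}$, we have $1+\xi>\frac1m$, and because $r-3<0$ this gives $(1+\xi)^{r-3}<m^{3-r}$. Multiplying by the negative quantity $\binom r3 u^3$ reverses the inequality, so the remainder is $\ge \binom r3 m^{3-r}u^3$. Undoing the scaling turns this into $(m/\eta)^{3-r}\binom r3x^3$. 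For the logarithm the remainder is $\frac{u^3}{3(1+\xi)^3}\ge \frac{m^3u^3}{3}$, by the same reasoning.

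I expect no real obstacle. The only delicate step is the last one: it is where the restriction to $(-\frac{m-1}{m}\eta,0)$ is needed to control $(1+\xi)^{r-3}$ away from the singularity, and where one must keep track of the sign reversal coming from $u^3<0$.
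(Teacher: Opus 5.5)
Your proof is correct. It rests on the same derivative-sign facts as the paper but packages them differently.

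The paper (Appendix, proof of Lemma~\ref{Lem:TayTech}) takes the difference $h=f_{\eta,r}-f_{\eta,r}^{\mathsf{low}}$, and the analogous difference for $g_\rho$. It differentiates until a derivative of definite sign appears: $h^{(4)}>0$ for $x>0$, and $h'''<0$ on $(-\frac{m-1}{m}\eta,0)$. It then climbs back down the chain, using that all lower derivatives vanish at $0$. You instead rescale to $\eta=\rho=1$ and apply Taylor's theorem with the Lagrange remainder once.

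The decisive estimate is identical in both arguments:
\begin{itemize}
\item On the negative side, $\eta+x>\eta/m$ gives $(\eta+x)^{r-3}<(m/\eta)^{3-r}$. The paper uses this pointwise to get $h'''<0$; you use it at the intermediate point $\xi$.
\item On the positive side, the paper's $h^{(4)}(x)=f_{\eta,r}^{(4)}(x)-f_{\eta,r}^{(4)}(0)>0$ is just the integrated form of your $\phi^{(5)}>0$.
\end{itemize}

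Your route has three advantages. You never need to compute four derivatives of piecewise-defined functions and check that they vanish at $0$. It also makes visible that the constants $m^{3-r}$ and $m^3$ are the worst case of the third-order remainder coefficient over the allowed interval. Finally, the same two lines prove the upper bounds \eqref{Inq:Up} on all of $x>-\eta$ (resp.\ $x>-\rho$), which the paper only asserts as well known. The paper's chain argument avoids invoking the mean-value form of the remainder, but it gains no extra generality.
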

The proof of Lemma \ref{Lem:TayTech} is elementary and is provided in Appendix \ref{Apd.Lem:TayTech}.
\begin{remark}\label{remark:Taylor}
It follows directly from Lemma \ref{Lem:TayTech} the following weaker lower bounds
\begin{equation*}
    \widetilde f_{\eta,r}^{\mathsf{low}}(x)\le f_{\eta, r}(x)  \quad \text{ and } \quad \widetilde g_{\rho}^{\mathsf{low}}(x) \le g_{\rho}(x)
\end{equation*}
where
\begin{equation*}
    \widetilde f_{\eta,r}^{\mathsf{low}}(x) = \begin{cases}
        \eta^r + \dfrac{1}{\eta^{1-r}}rx + \dfrac{1}{\eta^{2-r}}\begin{pmatrix}r\\2 \end{pmatrix}x^2 + \dfrac{1}{\eta^{4-r}}\begin{pmatrix}r\\4 \end{pmatrix}x^4 & \text{if } x\ge 0,\\
        f_{\eta,r}^{\mathsf{low}}(x) & \text{if } -\dfrac{m-1}{m}\eta < x < 0
    \end{cases}
\end{equation*}
and
\begin{equation*}
    \widetilde g_{\eta,r}^{\mathsf{low}}(x) = \begin{cases}
        \log \rho + \dfrac{x}{\rho} - \dfrac{x^2}{2\rho^2} -   \dfrac{x^4}{4\rho^4} & \text{if } x\ge 0,\\
        g_{\eta,r}^{\mathsf{low}}(x) & \text{if } -\dfrac{m-1}{m}\eta < x < 0.
    \end{cases}
\end{equation*}
In our subsequent analysis, we will use these weaker lower bounds since their expectation estimates have the same order with the lower bounds given in Lemma \ref{Lem:TayTech}.
\end{remark}



\section{Linear non-autonomous equations}\label{sec:linear}

\subsection{Lower bounds of $N_k$ and expectation estimates}
For $k\in \mathbb{N}$, $k \ge 1$, and a given step size $h>0$, we will calculate the expectations of 
\begin{gather}
	{Y}_k := \alpha\Delta W_k +\dfrac{\alpha^2}{2}(\Delta W_k)^2, \quad  \text{and} \quad
	N_k := 2 \left( 1 + \Delta A_k \right) \left( {Y}_k -\dfrac{\alpha^2}{2} h  \right) +  \left( {Y}_k -\dfrac{\alpha^2}{2} h  \right)^2, 
	\label{Nk}
\end{gather}
in the following lemma. 
\begin{lemma} 
	\label{Lem:ComputeYkj} Let  $\jeven$ be the largest even number that is less than or equal to $j$. Then,  
	\begin{align*}
		\mathbb{E} ({Y}_k^j) =   \sum_{i=0}^{\frac{1}{2}\jeven} \begin{pmatrix}
			j \\
			2i 
		\end{pmatrix}  \dfrac{\alpha^{2j-2i}}{2^{j-2i}} [(2j-2i-1)!!] h^{j-i}, 
	\end{align*}
	and 
	\begin{align*}
		\mathbb{E} (N_k^l)  
		=   \left\{
		\begin{array}{llll}
			\alpha^2 h + \dfrac{1}{2}\alpha^4h^2   & \text{if} \quad l=1, \\
			4(1+ \Delta A_k )^2 \left( \alpha^2 h + \dfrac{1}{2}\alpha^4h^2  \right) + 4 (1+ \Delta A_k) \beta_3 + O(h^2)  & \text{if} \quad l=2, \\
			8 (1+ \Delta A_k )^3 \beta_3 + 12 (1+ \Delta A_k )^2 \beta_4 
			+ 6 (1+ \Delta A_k ) \beta_5 + O(h^2)   & \text{if} \quad l= 3,
		\end{array}
		\right.
	\end{align*}
	where \begin{align}
		\beta_m:= \mathbb{E} \left({Y}_k -\dfrac{\alpha^2}{2} h \right)^m = O(h^2) \quad \text{for} \quad  m=3,4,5. \label{betam}
	\end{align}
\end{lemma}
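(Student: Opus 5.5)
The plan has two parts: compute $\mathbb{E}(Y_k^j)$ by a binomial expansion, and then obtain $\mathbb{E}(N_k^l)$ by expanding $N_k$ in powers of the centred variable $Z_k := Y_k - \frac{\alpha^2}{2}h$.

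\textbf{Moments of $Y_k$.} Write $Y_k = \alpha\Delta W_k + \frac{\alpha^2}{2}(\Delta W_k)^2$ and apply the binomial theorem:
\[
Y_k^j=\sum_{s=0}^{j}\binom{j}{s}\alpha^{s}(\Delta W_k)^{s}\Big(\tfrac{\alpha^2}{2}\Big)^{j-s}(\Delta W_k)^{2(j-s)} .
\]
The power of $\Delta W_k$ in each term is $2j-s$. By Lemma \ref{Lem:BasicProp_0}, odd moments vanish, so only even $s=2i$ survive, with $0\le i\le \frac12\jeven$. For these, $\mathbb{E}(\Delta W_k)^{2j-2i}=(2j-2i-1)!!\,h^{j-i}$. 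Collecting the coefficient $\binom{j}{2i}\alpha^{2i}\alpha^{2j-4i}2^{-(j-2i)}=\binom{j}{2i}\alpha^{2j-2i}2^{-(j-2i)}$ gives exactly the stated formula. A useful by-product is that $\mathbb{E}(Y_k^j)=O(h^{\lceil j/2\rceil})$, since the smallest power $h^{j-i}$ occurs at $i=\frac12\jeven$.

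\textbf{Moments of $N_k$.} Set $c=1+\Delta A_k$, which is deterministic, so that $N_k=2cZ_k+Z_k^2$.

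For $l=1$, $\mathbb{E}Z_k=\mathbb{E}Y_k-\frac{\alpha^2}{2}h=0$, because $\mathbb{E}Y_k=\frac{\alpha^2}{2}h$. Next, $\mathbb{E}Z_k^2=\mathbb{E}Y_k^2-\frac{\alpha^4}{4}h^2$, and the formula gives $\mathbb{E}Y_k^2=\alpha^2h+\frac34\alpha^4h^2$. Hence $\mathbb{E}N_k=\alpha^2h+\frac12\alpha^4h^2$.

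For $l=2$, expand $N_k^2=4c^2Z_k^2+4cZ_k^3+Z_k^4$. This gives $4c^2(\alpha^2h+\frac12\alpha^4h^2)+4c\beta_3+\beta_4$, and $\beta_4$ is absorbed into $O(h^2)$.

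For $l=3$, expand $N_k^3=8c^3Z_k^3+12c^2Z_k^4+6cZ_k^5+Z_k^6$. The last term contributes $\beta_6$, which I will show is $O(h^3)\subset O(h^2)$.

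\textbf{Main obstacle: the bound $\beta_m=O(h^2)$ for $m=3,4,5$.} This is where care is needed. The crude estimate is $\mathbb{E}|Z_k|^m=O(h^{m/2})$, which follows because $\Delta W_k\sim h^{1/2}$ and $Z_k$ is a polynomial in $\Delta W_k$ and $h$ whose terms all have order at least $h^{1/2}$. This suffices for $m\ge4$, and gives $\beta_6=O(h^3)$.

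For $m=3$ the crude estimate only yields $h^{3/2}$, so cancellation must be used. Write $Z_k=\alpha\Delta W_k+\frac{\alpha^2}{2}((\Delta W_k)^2-h)$ and expand the cube binomially. The $h^{3/2}$-order term is $\alpha^3\mathbb{E}(\Delta W_k)^3=0$ by Lemma \ref{Lem:BasicProp_0}. Every remaining term contains at least one factor of $((\Delta W_k)^2-h)$, which carries order $h$, together with total order in $\Delta W_k$ giving at least $h^2$.

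Computing explicitly, the term $3\alpha^2\Delta W_k^2\cdot\frac{\alpha^2}{2}(\Delta W_k^2-h)$ has expectation $\frac32\alpha^4(3h^2-h^2)=3\alpha^4h^2$, and all other terms are of odd parity (hence zero) or of higher order. Thus $\beta_3=O(h^2)$, and the statement follows.
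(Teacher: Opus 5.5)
Your proposal is correct and follows essentially the same route as the paper: you expand $Y_k^j$ binomially and drop the vanishing odd Gaussian moments, then expand $N_k^l$ in powers of $Z_k = Y_k - \frac{\alpha^2}{2}h$. The only cosmetic difference is in the bound on $\beta_m$. The paper gets $\beta_m = O(h^2)$ uniformly from its double sum, since every surviving term carries $h^{m-i}$ with $i \le \lfloor m/2 \rfloor$, so the cancellation for $m=3$ is built in. You instead use a crude $O(h^{m/2})$ bound for $m \ge 4$ and compute $\beta_3 = 3\alpha^4 h^2 + O(h^3)$ explicitly, and you also spell out the $l=1,2,3$ algebra that the paper leaves implicit.
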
 
We provide the complete proof of Lemma $\ref{Lem:ComputeYkj}$ in Appendix $\ref{Apd.Lem:ComputeYkj}$.

The following lemma is crucial in obtaining the lower bounds of Lyapunov exponents.
\begin{lemma} 
	\label{Le:Est:Nk}
	Assume \eqref{lowerbound_a}. There exists $m>1$ sufficiently large such that 
	\begin{align*}
		N_k  \stackrel{\text{a.s.}}{>} -\dfrac{m-1}{m} (1+\Delta A_k)^2 , \quad \forall  
		k \ge 0, 
	\end{align*}
	for any 
	\begin{align*}
		0<h < h_{\ast}:= \dfrac{2(1-1/ \sqrt{m}) (1+\gamma_{\ast})-1}{\alpha^2}. 
	\end{align*}  
\end{lemma}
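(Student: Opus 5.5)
The key observation is that $N_k$ is a difference of two squares. With $Z_k := Y_k - \frac{\alpha^2}{2}h$, the definition \eqref{Nk} gives
\begin{equation*}
N_k = (1+\Delta A_k + Z_k)^2 - (1+\Delta A_k)^2 .
\end{equation*}
So the claim $N_k > -\frac{m-1}{m}(1+\Delta A_k)^2$ is equivalent to
\begin{equation*}
(1+\Delta A_k+Z_k)^2 > \frac{1}{m}(1+\Delta A_k)^2 .
\end{equation*}
By \eqref{lowerbound_a} we have $1+\Delta A_k \ge 1+\gamma_* > \frac12 > 0$. It therefore suffices to show the one-sided bound $1+\Delta A_k + Z_k > \frac{1}{\sqrt m}(1+\Delta A_k)$.

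Next I use the structure of the Milstein term. Completing the square,
\begin{equation*}
Y_k = \alpha\Delta W_k + \frac{\alpha^2}{2}(\Delta W_k)^2 = \frac12(\alpha\Delta W_k + 1)^2 - \frac12 \ge -\frac12
\end{equation*}
surely, for every $\omega$. Hence $1+\Delta A_k + Z_k \ge 1+\Delta A_k - \frac12 - \frac{\alpha^2}{2}h$. The claim then follows once
\begin{equation*}
\Big(1-\frac{1}{\sqrt m}\Big)(1+\Delta A_k) > \frac12 + \frac{\alpha^2}{2}h ,
\end{equation*}
that is, once $h < \big(2(1-1/\sqrt m)(1+\Delta A_k) - 1\big)/\alpha^2$.

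The left-hand side of this last condition is increasing in $\Delta A_k$, since $1-1/\sqrt m>0$. Its worst case is therefore $\Delta A_k = \gamma_*$, or more precisely any $\Delta A_k \ge \gamma_*$ by the definition of the infimum. This worst case is exactly $h<h_*$, uniformly in $k$.

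The only genuine point is to make sure that $h_*>0$, and this is where the choice of $m$ enters. Since $\gamma_* > -\frac12$, we have $2(1+\gamma_*)>1$. We then choose $m>1$ large enough that $2(1-1/\sqrt m)(1+\gamma_*) > 1$, which is possible because $1-1/\sqrt m \to 1$ as $m\to\infty$. With this $m$ the interval $(0,h_*)$ is nonempty.

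The inequality obtained is strict for every $k$ and every $\omega$, so in particular it holds almost surely. If $\alpha=0$, then $Z_k=0$ and $N_k=0$, and the statement is trivial for any $m>1$; in that case $h_*$ is read as $+\infty$.
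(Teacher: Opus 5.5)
Your proof is correct and follows essentially the same route as the paper: both use the Milstein bound $Y_k = \frac12(\alpha\Delta W_k+1)^2-\frac12 \ge -\frac12$ to reduce the claim to $(1-1/\sqrt{m})(1+\Delta A_k) + Y_k - \frac{\alpha^2}{2}h > 0$, which holds for $h<h_\ast$ once $m$ is chosen so that $h_\ast>0$. The only difference is cosmetic. You factor $N_k + \frac{m-1}{m}(1+\Delta A_k)^2$ as the difference of squares $(1+\Delta A_k+Z_k)^2-\frac1m(1+\Delta A_k)^2$. The paper instead writes it as a nonnegative square plus $\frac{2}{\sqrt{m}}(1+\Delta A_k)\big((1-1/\sqrt{m})(1+\Delta A_k) + Y_k - \frac{\alpha^2}{2}h\big)$, which is positive under exactly the same condition.
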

\begin{proof} Under the assumption \eqref{lowerbound_a}, we have $1+\gamma_\ast >1/2$. Therefore, there exists a real number $m>1$ such that $1+\gamma_\ast > (1/2) (1-1/ \sqrt{m})^{-1}$, which consequently guarantees that $h_\ast>0$. Elementary computations give
\[
    Y_k = \frac 12(\alpha \Delta W_k + 1)^2 - \frac 12 \ge -\frac 12
\]
almost surely. Hence,  
    \begin{align*}
		& \dfrac{2\sqrt{m}-2}{m} (1+\Delta A_k)^2 + \frac{2}{\sqrt{m}} (1+\Delta A_k) \left( { Y}_k -\dfrac{\alpha^2}{2} h  \right) \\
		& \stackrel{\text{a.s.}}{\ge} \dfrac{2\sqrt{m}-2}{m} (1+\Delta A_k)^2 + \frac{2}{\sqrt{m}} (1+\Delta A_k) \left(-\frac{1}{2}-\dfrac{\alpha^2}{2} h \right)\\
        &\ge \frac{1}{\sqrt m}(1+\Delta A_k)\bra{2\bra{1 - \frac{1}{\sqrt m}}(1+\Delta A_k) - (1+\alpha^2h) }\\
        &\ge \frac{1}{\sqrt m}(1+\gamma_{\ast})\bra{2\bra{1 - \frac{1}{\sqrt m}}(1+\gamma_{\ast}) - 1 - \alpha^2h } > 0
	\end{align*}
	for all $0<h < h_\ast$. Using this estimate, for any integer $k\ge 0$ we now observe that  
	\begin{align*}
		N_k + \dfrac{m-1}{m} (1+\Delta A_k)^2 &=   \left[ 2 \left( 1 + \Delta A_k \right) \left( {Y}_k -\dfrac{\alpha^2}{2} h  \right) +  \left( {Y}_k -\dfrac{\alpha^2}{2} h  \right)^2 \right] + \dfrac{m-1}{m} (1+\Delta A_k)^2 \\ 
		& = \left(   \frac{\sqrt{m}-1}{\sqrt{m}}  \left( 1 + \Delta A_k \right) +  {Y}_k -\dfrac{\alpha^2}{2} h    \right)^2 + \dfrac{2\sqrt{m}-2}{m} (1+\Delta A_k)^2 \\
		& \quad + \frac{2}{\sqrt{m}}  \left( 1 + \Delta A_k \right) \left( {Y}_k -\dfrac{\alpha^2}{2} h  \right) \\
		& \stackrel{\text{a.s.}}{>} \dfrac{2\sqrt{m}-2}{m} (1+\Delta A_k)^2 + \frac{2}{\sqrt{m}}  \left( 1 + \Delta A_k \right) \left( {Y}_k -\dfrac{\alpha^2}{2} h  \right) \\
		& \stackrel{\text{a.s.}}{ > } 0,
	\end{align*}
	which shows the desired estimate for $N_k$. 
\end{proof}


\subsection{Almost sure stability - Proof of Theorem \ref{Thm1}}
To obtain sharp estimates of the Lyapunov exponent in the almost-sure sense, we need to bound the logarithmic function not only from above but also from below, for which we will employ the truncation technique given in Lemma \ref{Lem:TayTech}. The former will be constructed by adapting the method given in \cite{higham2007almost} and relying on the assumption about the difference $\Delta A_k$ for any $k$. However, the latter one, i.e., the estimate from below, requires us to precisely estimate the expectation of   
\begin{align}
	\xi_\rho(N_k(\omega)):=  
	\left\{ \begin{array}{llrll}
		 - \dfrac{N_k^4(\omega)}{4(1+ \Delta A_k )^8} & \text{if}& \omega \in \Omega_{k,1} &:=\{  N_k(\omega) \ge 0 \} ,
		\vspace{0.15cm}\\
		m^3\dfrac{N_k^3(\omega)}{3(1+ \Delta A_k )^6}  & \text{if}& \omega \in \Omega_{k,2} &:=\bigg\{ - \dfrac{m-1}{m} (1+ \Delta A_k )^2 < N_k(\omega) < 0 \bigg\} .  
	\end{array} \right.   
	\label{xirho}
\end{align}
from below.  Thanks to Lemma \ref{Le:Est:Nk}, with a sufficiently smooth scheme that $0<h < h_\ast$, we imply that for any integer $k\ge 0$ the sample space consists of $\Omega_{k,1}$ and $\Omega_{k,2}$, i.e.,
\begin{align*}
	\Omega = \Omega_{k,1} \cup \Omega_{k,2}, \quad \forall k\ge 0. 
\end{align*}
This allows us to use the probability density function of the normal distribution to estimate the expectation of $\xi_\rho(N_k)$ from below, where the integral with respect to the probability density function of $\xi_\rho(N_k)$ can be split into the sum of integrals on $\Omega_{k,1} $ and $ \Omega_{k,2}$. 

\begin{lemma}
	\label{Lem.E(xiNk)}
	For every $k\in \mathbb{N}$, it holds that
	\begin{align*}
		\mathbb{E} \left[\xi_\rho \left(N_k \right) \right]
	=  O(h^{\frac{3}{2}})     \frac{1}{(1+ \Delta A_k )^3}.
	\end{align*} 
\end{lemma}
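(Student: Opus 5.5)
By Lemma \ref{Le:Est:Nk}, for $0<h<h_\ast$ the sample space splits as $\Omega=\Omega_{k,1}\cup\Omega_{k,2}$. I would therefore write
\[
\mathbb{E}[\xi_\rho(N_k)] = -\frac{\mathbb{E}[N_k^4\mathbf 1_{\Omega_{k,1}}]}{4(1+\Delta A_k)^8} + \frac{m^3\,\mathbb{E}[N_k^3\mathbf 1_{\Omega_{k,2}}]}{3(1+\Delta A_k)^6}
\]
and bound each piece in absolute value. Note that $N_k^3<0$ on $\Omega_{k,2}$, so both pieces are nonpositive. The statement then amounts to showing that $|\mathbb{E}\xi_\rho(N_k)|\le C h^{3/2}(1+\Delta A_k)^{-3}$, with $C$ independent of $k$.

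The basic input is a moment bound on $Z_k:=Y_k-\frac{\alpha^2}{2}h$. Since $\Delta W_k\sim\sqrt h\,\mathcal N(0,1)$, Lemma \ref{Lem:BasicProp_0} gives $\mathbb{E}|Z_k|^q\le C_q h^{q/2}$ for every $q\ge1$ (with $h<1$), because the linear term $\alpha\Delta W_k$ dominates. Next I expand $N_k=2(1+\Delta A_k)Z_k+Z_k^2$ and use $(u+v)^q\le 2^{q-1}(|u|^q+|v|^q)$ to get
\[
\mathbb{E}|N_k|^q\le C\big((1+\Delta A_k)^q h^{q/2}+h^{q}\big).
\]
For the fourth-moment term this yields
\[
\frac{\mathbb{E}N_k^4}{(1+\Delta A_k)^8}\le C\Big(\frac{h^2}{(1+\Delta A_k)^4}+\frac{h^4}{(1+\Delta A_k)^8}\Big).
\]
By \eqref{lowerbound_a} we have $1+\Delta A_k\ge 1+\gamma_\ast>1/2$ and $\sup_k\Delta A_k<\infty$, so $1+\Delta A_k$ is bounded above and below uniformly in $k$. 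Hence every power $(1+\Delta A_k)^{-j}$ is comparable to $(1+\Delta A_k)^{-3}$ with uniform constants, and this term is $O(h^2)(1+\Delta A_k)^{-3}$, which is better than required.

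For the cubic term I bound $|\mathbb{E}[N_k^3\mathbf 1_{\Omega_{k,2}}]|\le \mathbb{E}|N_k|^3\le C\big((1+\Delta A_k)^3h^{3/2}+h^3\big)$. Dividing by $(1+\Delta A_k)^6$ gives $O(h^{3/2})(1+\Delta A_k)^{-3}$ plus a smaller term. This is exactly where the rate $h^{3/2}$ comes from: the absolute third moment of the Gaussian-driven $N_k$ scales like $h^{3/2}$. The signed third moment would be $O(h^2)$ by Lemma \ref{Lem:ComputeYkj}, but the indicator destroys that cancellation.

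The main obstacle is keeping the dependence on $(1+\Delta A_k)$ explicit, with constants that are uniform in $k$ and do not blow up through $m^3$. Since $m$ is fixed by Lemma \ref{Le:Est:Nk}, $m^3$ is only a constant, and the two-sided bounds on $1+\Delta A_k$ from \eqref{lowerbound_a} absorb the lower-order $h^3$ and $h^4$ terms. Alternatively, following the paper's hint about the normal density, one could compute $\mathbb{E}[N_k^3\mathbf 1_{\Omega_{k,2}}]$ as an explicit integral over the interval in $\Delta W_k$ where $N_k<0$. That interval is roughly $\Delta W_k\in(-\text{const},0)$, and on it $|N_k|\lesssim (1+\Delta A_k)|\alpha\Delta W_k|$, which gives the same $h^{3/2}$ bound. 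I would only pursue this if a sharper constant were needed.
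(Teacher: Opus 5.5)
Your proof is correct. It shares the paper's first step, splitting $\mathbb{E}[\xi_\rho(N_k)]$ over $\Omega_{k,1}$ and $\Omega_{k,2}$ (which exhaust $\Omega$ by Lemma~\ref{Le:Est:Nk}), but it obtains the estimate differently.

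The paper writes $N_k=\mathcal H_k(\Delta W_k/\sqrt h)$ and expresses both pieces as Gaussian-density integrals of $\mathcal H_k^3$ and $\mathcal H_k^4$ over the corresponding subsets of $\mathbb{R}$. It then simply asserts $O(h^{3/2})(1+\Delta A_k)^{-3}$ without estimating those integrals. You instead drop the indicators and bound $\mathbb{E}|N_k|^3$ and $\mathbb{E}N_k^4$ via $N_k=2(1+\Delta A_k)Z_k+Z_k^2$ and $\mathbb{E}|Z_k|^q\le C_qh^{q/2}$. This buys several things:
\begin{itemize}
\item it supplies the estimate the paper leaves implicit and makes the uniformity in $k$ explicit;
\item it never uses the explicit polynomial, which matters because the paper's displayed $\mathcal H_k$ carries a spurious square on the $-\alpha^2h(\cdot)$ term;
\item it explains why the rate is $h^{3/2}$ rather than the $h^2$ of the signed moments in Lemma~\ref{Lem:ComputeYkj};
\item it shows that only the lower bound in \eqref{lowerbound_a} matters, since $(1+\Delta A_k)^{-j}\le(1+\gamma_\ast)^{3-j}(1+\Delta A_k)^{-3}$ for $j\ge 3$.
\end{itemize}
The density route, if carried through, would additionally give the sign and exact leading coefficient of the $h^{3/2}$ term (governed by $\mathbb{E}[G^3\mathbf 1_{\{G<0\}}]$ with $G$ standard normal). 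Theorem~\ref{Thm1} does not need this.

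Two minor repairs:
\begin{itemize}
\item Lemma~\ref{Lem:BasicProp_0} gives only even moments, so obtain the odd ones by Lyapunov's inequality, e.g.\ $\mathbb{E}|Z_k|^3\le(\mathbb{E}Z_k^4)^{3/4}$.
\item In your optional alternative, $\{N_k<0\}=\{Z_k<0\}$ is the event $\{\alpha\Delta W_k\in(-1-\sqrt{1+\alpha^2h},\,-1+\sqrt{1+\alpha^2h})\}$. On it, $|N_k|\le 2(1+\Delta A_k)|Z_k|\lesssim(1+\Delta A_k)(|\alpha\Delta W_k|+h)$, so your bound omitted an additive $O(h)$. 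This is harmless for the rate.
\end{itemize}
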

\begin{proof} We first recall that ${Y}_k := \alpha\Delta W_k + (\alpha^2/2)(\Delta W_k)^2$. Using the definition of $N_k$, this stochastic process can be expanded in terms of $\Delta W_k$ as follows
	\begin{align}
		N_k &= 2 \left( 1 + \Delta A_k \right) \left( {Y}_k -\dfrac{\alpha^2}{2} h  \right) +  \left( {Y}_k -\dfrac{\alpha^2}{2} h  \right)^2\notag\\
		& = 2 \left( 1 + \Delta A_k \right) \left( \alpha\Delta W_k +\dfrac{\alpha^2}{2}(\Delta W_k)^2 -\dfrac{\alpha^2}{2} h  \right) +  \left( \alpha\Delta W_k +\dfrac{\alpha^2}{2}(\Delta W_k)^2 -\dfrac{\alpha^2}{2} h  \right)^2 \notag\\
		& = 2 \left( 1 + \Delta A_k \right) \left( \alpha\Delta W_k +\dfrac{\alpha^2}{2}(\Delta W_k)^2 \right) - \alpha^2h \left(\alpha\Delta W_k +\dfrac{\alpha^2}{2}(\Delta W_k)^2 \right) \notag  \\
		& \quad +  \left(\alpha\Delta W_k +\dfrac{\alpha^2}{2}(\Delta W_k)^2 \right)^2 +\left( \frac{\alpha^4}{4}h - \alpha^2  (1+  \Delta A_k ) \right) h . \notag 
	\end{align}
	By the property of the Brownian motion, $\Delta W_k \sim \mathcal N(0,h)$, or equivalently, $\Delta W_k/\sqrt{h}$ is a standard normal distribution. Hence, we can define the function
	\begin{align*}
		\mathcal H_k(y) &=  2 \left( 1 + \Delta A_k \right) \left( \alpha \sqrt{h} y +\dfrac{\alpha^2h}{2} y^2\right) - \alpha^2h \left( \alpha \sqrt{h} y +\dfrac{\alpha^2h}{2} y^2 \right)^2 \\
		& \quad  +  \left(\alpha \sqrt{h} y +\dfrac{\alpha^2h}{2} y^2\right)^2 +\left( \frac{\alpha^4}{4}h - \alpha^2  (1+  \Delta A_k ) \right) h, \quad y \in \mathbb{R},     
	\end{align*}
	and write shortly that  
	\begin{align*}
		N_k(\omega) = \mathcal H_k \left(\dfrac{\Delta W_k(\omega)}{\sqrt{h}} \right), \quad \forall  \omega \in \Omega,  k \ge 0.
	\end{align*}
	Thanks to Lemma \ref{Le:Est:Nk}, we see that $\mathcal H_k (\Delta W_k(\omega)/\sqrt{h})$ takes values at least $-((m-1)/m)(1+\Delta A_k)^2$ almost surely. Hence, the process $\xi_\rho(N_k)$ can be rewritten as 
	\begin{align*}
		\xi_\rho(N_k) = (\xi_\rho \circ \mathcal H_k ) \left(\dfrac{\Delta W_k}{\sqrt{h}} \right), \quad \forall k \ge 0.
	\end{align*}
	
	To estimate the expectation of the process $N_k$, in the following we will use the probability density function of $\Delta W_k/\sqrt{h}$. Indeed, we have 
	\begin{align*}
		& \mathbb{E} \left[\xi_\rho \left(N_k \right) \right] \\
		& = \frac{1}{2\pi} \int_{-\infty}^{+\infty} (\xi_\rho \circ \mathcal H_k )(y) e^{-\frac{y^2}{2}}dy \\
		& = \frac{1}{2\pi} \int_{\{ \mathcal H_k(y) \ge 0 \}} (\xi_\rho \circ \mathcal H_k )(y) e^{-\frac{y^2}{2}}dy  + \frac{1}{2\pi} \int_{\{ - \frac{m-1}{m} (1+ \Delta A_k )^2 < \mathcal H_k(y) < 0 \}} (\xi_\rho \circ \mathcal H_k )(y) e^{-\frac{y^2}{2}}dy\\
		&= \dfrac{m^3}{3(1+ \Delta A_k )^6 \sqrt{2\pi}} \int_{\{ - \frac{m-1}{m} (1+ \Delta A_k )^2 < \mathcal H_k(y) < 0 \}} \Big( \mathcal H_k(y)  \Big)^3  e^{-\frac{y^2}{2}}dy  \\
		& \quad - \dfrac{1}{4(1+ \Delta A_k )^8 \sqrt{2\pi}} \int_{\{ \mathcal H_k(y) \ge 0 \}} \Big( \mathcal H_k(y)  \Big)^4   e^{-\frac{y^2}{2}}dy  \notag\\ 
		& = \dfrac{m^3}{3(1+ \Delta A_k )^6 \sqrt{2\pi}} \int_{\{ - \frac{m-1}{m} (1+ \Delta A_k )^2 < \mathcal H_k(y) < 0 \}} \Bigg[ 2 \left( 1 + \Delta A_k \right) \left( \alpha \sqrt{h} y +\dfrac{\alpha^2h}{2} y^2\right)\notag\\
		& \quad - \alpha^2h \left( \alpha \sqrt{h} y +\dfrac{\alpha^2h}{2} y^2 \right)^2   +  \left(\alpha \sqrt{h} y +\dfrac{\alpha^2h}{2} y^2\right)^2 +\left( \frac{\alpha^4}{4}h - \alpha^2  (1+  \Delta A_k ) \right) h \Bigg]^3  e^{-\frac{y^2}{2}}dy \notag\\
		&\quad - \dfrac{1}{4(1+ \Delta A_k )^8 \sqrt{2\pi}} \int_{\{ \mathcal H_k(y) \ge 0 \}} \Bigg[ 2 \left( 1 + \Delta A_k \right) \left( \alpha \sqrt{h} y +\dfrac{\alpha^2h}{2} y^2\right)\notag\\
		&\quad  - \alpha^2h \left( \alpha \sqrt{h} y +\dfrac{\alpha^2h}{2} y^2 \right)^2  +  \left(\alpha \sqrt{h} y +\dfrac{\alpha^2h}{2} y^2\right)^2 +\left( \frac{\alpha^4}{4}h - \alpha^2  (1+  \Delta A_k ) \right) h \Bigg]^4  e^{-\frac{y^2}{2}}dy   \\
		& = O(h^{\frac{3}{2}})     \frac{1}{(1+ \Delta A_k )^3},
	\end{align*} 
    which is the desired estimate.
\end{proof}
We are now in a position to prove Theorem $\ref{Thm1}$.
\begin{proof}[Proof of Theorem $\ref{Thm1}$]
Inductively, it follows from the representation  \eqref{SDE:MScheme:Gen} that  
\begin{align*}
     X_h(t_{n+1}) = X_h(t_{n}) \left( 1 + \Delta A_n -\dfrac{\alpha^2}{2} h  +  Y_n  \right) = x_0 \prod_{k=0}^n \left(  1 + \Delta A_k -\dfrac{\alpha^2}{2} h +  {Y}_k   \right) . 
\end{align*}
This implies that
\begin{align}
     X_h(t_{n})  = x_0 \prod_{k=0}^{n-1} \left(  1 + \Delta A_k -\dfrac{\alpha^2}{2} h +  {Y}_k   \right) . 
     \label{Gen}
\end{align}
Therefore, by taking the logarithms of both sides of $\eqref{Gen}$, the discrete Lyapunov exponent is given by 
\begin{align}
     \frac{1}{t_n}\log |X_h(t_n)| =& \frac{1}{t_n}\log |x_0| +  \frac{1}{t_n}\sum_{k=0}^{n-1} \log  
 \Bigg|\left( 1 + \Delta A_k \right)  +  \left( {Y}_k -\dfrac{\alpha^2}{2} h  \right)  \Bigg| \notag \\
= &  \frac{1}{t_n}\log |x_0| +  \frac{1}{2t_n}\sum_{k=0}^{n-1} \log \left( (1+ \Delta A_k )^2 + N_k \right)  .
 \label{Lya:Compute}
 \end{align}
In the sequel, we will estimate this from both above and below, which will be based on the inequality \eqref{Inq:Up} and those of lower bound type given in Lemma \ref{Lem:TayTech}.

\medskip

\noindent \textit{\underline{Upper estimate}}: Using inequality \eqref{Inq:Up}, we get
\begin{align}
\frac{1}{2t_n}\log \left( (1+ \Delta A_k )^2 + N_k \right) \le \frac{1}{2t_n}\log (1+ \Delta A_k )^2 + \frac{1}{2t_n}\frac{N_k}{(1+ \Delta A_k )^2}\notag\\
\qquad\qquad -  \frac{1}{4t_n}\frac{N_k^2}{(1+ \Delta A_k )^4}   + \frac{1}{6t_n}\frac{N_k^3}{(1+ \Delta A_k )^6} . 
\label{a0}
\end{align}
Using $\eqref{limsup}$ and Lemma \ref{Lem:ComputeYkj}, we have 
 \begin{align}
\label{a1}
      \limsup_{n\to \infty} \frac{1}{2t_n}  \sum_{k=0}^{n-1} \frac{N_k}{(1+ \Delta A_k )^2}
    &  \stackrel{\text{a.s.}}{\le} \limsup_{n\to \infty} \left( \frac{\alpha^2}{2n}  \sum_{k=0}^{n-1} \frac{1}{(1+ \Delta A_k )^2}   + \frac{\alpha^4h}{4n}  \sum_{k=0}^{n-1} \frac{1}{(1+ \Delta A_k )^2} \right)\notag\\
   &    \le \limsup_{n\to \infty} \left( \frac{\alpha^2}{2n}  \sum_{k=0}^{n-1} \frac{1}{(1+ \Delta A_k )^2} \right)+ O(h)
\end{align}
 thanks to
\begin{equation*}
    \limsup_{n\to\infty}\frac{1}{n}\sum_{k=0}^{n-1}\frac{1}{(1+\Delta A_k)^2} < +\infty.
\end{equation*}
For the higher order terms concerning $N_k$ we recall the computation $\beta_m = \mathbb{E} \left({Y}_k -\dfrac{\alpha^2}{2} h \right)^m= O(h^2)$ (for $m\ge 3$)  from $\eqref{betam}$. This, together with $\eqref{liminf}$ and Lemma \ref{Lem:ComputeYkj}, yields 
\begin{align}
\label{a2}
    & \liminf_{n\to \infty}  \frac{1}{4t_n}  \sum_{k=0}^{n-1} \frac{N_k^2}{(1+ \Delta A_k )^4} \notag \\
       & \stackrel{\text{a.s.}}{\ge }  \liminf_{n\to \infty} \frac{1}{t_n}  \sum_{k=0}^{n-1} \frac{(1+ \Delta A_k )^2 ( \alpha^2 h +  \frac{1}{2}\alpha^4h^2  ) + (1+ \Delta A_k) \beta_3 + O(h^2) }{(1+ \Delta A_k )^4} \notag\\
       & {=}  \liminf_{n\to \infty}   \sum_{k=0}^{n-1} \left( \dfrac{\alpha^2}{n} \frac{1}{(1+ \Delta A_k )^2} + \dfrac{1}{2n} \dfrac{ \alpha^4 h}{(1+ \Delta A_k )^2} + \dfrac{1}{n} \dfrac{\beta_3}{h(1+ \Delta A_k )^3} + \dfrac{1}{n} \dfrac{O(h)}{(1+ \Delta A_k )^4} \right) \notag\\
      & {=}   \liminf_{n\to \infty} \left( \frac{\alpha^2}{n}  \sum_{k=0}^{n-1} \frac{1}{(1+ \Delta A_k )^2}\right) + O(h), 
\end{align}
 thanks to $\beta_3 = O(h^2)$ and 
\[
\limsup_{n\to\infty}\sum_{j=2}^4\frac{1}{n}\sum_{k=0}^{n-1}\frac{1}{(1+\Delta A_k)^j} < +\infty.
\]
Similarly, we can estimate
\begin{align}
\label{a3}
& \limsup_{n\to \infty} \frac{1}{6t_n}  \sum_{k=0}^{n-1} \frac{N_k^3}{(1+ \Delta A_k )^6} \notag\\
    & \stackrel{\text{a.s.}}{\le} \limsup_{n\to \infty}  \frac{1}{6t_n}  \sum_{k=0}^{n-1} \frac{  8 (1+ \Delta A_k )^3 \beta_3 + 12 (1+ \Delta A_k )^2 \beta_4 
     + 6 (1+ \Delta A_k ) \beta_5 + O(h^2) }{(1+ \Delta A_k )^6} \notag \\
     & \le Ch\limsup_{n\to\infty}\sum_{j=3}^6\frac{1}{n}\sum_{k=0}^{n-1}\frac{1}{(1+\Delta A_k)^j} \notag\\
   &=  O(h),
\end{align}
 where we used $\beta_m = O(h^2)$ for $m\in \{3,4,5\}$.
Therefore, combining $\eqref{a0}$, $\eqref{a1},$ $\eqref{a2}$ and $\eqref{a3}$, we obtain 
\begin{align*}
    \limsup_{n\to \infty} \frac{1}{t_n} \log |X_h(t_n)|  
    & \stackrel{\text{a.s.}}{\le} \limsup_{n\to \infty} \left(  \frac{1}{n}\sum_{k=0}^{n-1} \dfrac{\log |1+ \Delta A_k |}{h}\right)   + \dfrac{\alpha^2}{2}\limsup_{n\to\infty}\left(\frac{1}{n}\sum_{k=0}^{n-1} \dfrac{1}{(1+ \Delta A_k )^2}\right)\\
    &\quad  - \alpha^2\liminf_{n\to\infty}\left(\frac{1}{n}\sum_{k=0}^{n-1} \dfrac{1}{(1+ \Delta A_k )^2}\right) + O(h).
\end{align*}

\noindent \textit{\underline{Lower estimate}}: 
By Lemma $\ref{Le:Est:Nk}$ and Remark \ref{remark:Taylor}, we have
\begin{align}
\label{a0'}
\log \left( (1+ \Delta A_k )^2 + N_k \right) \ge \log (1+ \Delta A_k )^2 + \frac{N_k}{(1+ \Delta A_k )^2}  -  \frac{N_k^2}{2(1+ \Delta A_k )^4} + \xi_\rho(N_k) 
\end{align}
with $\xi_\rho(\cdot)$ is defined in \eqref{xirho}.
Using $\eqref{liminf}$ and Lemma \ref{Lem:ComputeYkj}, we have 
 \begin{align}
\label{a11}
      \liminf_{n\to \infty} \frac{1}{2t_n}  \sum_{k=0}^{n-1} \frac{N_k}{(1+ \Delta A_k )^2}
 & \stackrel{\text{a.s.}}{\ge} \liminf_{n\to \infty} \left( \frac{\alpha^2}{2n}  \sum_{k=0}^{n-1} \frac{1}{(1+ \Delta A_k )^2}   + \frac{\alpha^4h}{4n}  \sum_{k=0}^{n-1} \frac{1}{(1+ \Delta A_k )^2} \right)\notag \\
&  = \liminf_{n\to \infty} \left( \frac{\alpha^2}{2n}  \sum_{k=0}^{n-1} \frac{1}{(1+ \Delta A_k )^2}\right) + O(h).
\end{align}
For the higher order terms concerning $N_k$ we recall again the computation $\beta_m = \mathbb{E} \left({Y}_k -\dfrac{\alpha^2}{2} h \right)^m= O(h^2)$ (for $m\ge 3$)  from $\eqref{betam}$. This, together with $\eqref{limsup}$ and Lemma \ref{Lem:ComputeYkj}, yields 
\begin{align}
\label{a22}
    &  \limsup_{n\to \infty} \frac{1}{4t_n}  \sum_{k=0}^{n-1} \frac{N_k^2}{(1+ \Delta A_k )^4} \notag \\
       & \stackrel{\text{a.s.}}{\le }  \limsup_{n\to \infty}  \frac{1}{t_n}  \sum_{k=0}^{n-1} \frac{(1+ \Delta A_k )^2 ( \alpha^2 h +  \frac{1}{2}\alpha^4h^2  ) + (1+ \Delta A_k) \beta_3 + O(h^2) }{(1+ \Delta A_k )^4} \notag\\
       & \stackrel{\text{a.s.}}{=}  \limsup_{n\to \infty}    \sum_{k=0}^{n-1} \left( \dfrac{\alpha^2}{n} \frac{1}{(1+ \Delta A_k )^2} + \dfrac{1}{2n} \dfrac{ \alpha^4 h}{(1+ \Delta A_k )^2} + \dfrac{1}{n} \dfrac{\beta_3}{h(1+ \Delta A_k )^3} + \dfrac{1}{n} \dfrac{O(h)}{(1+ \Delta A_k )^4} \right) \notag\\
      & \stackrel{\text{a.s.}}{=}  \limsup_{n\to \infty} \left( \frac{\alpha^2}{n}  \sum_{k=0}^{n-1} \frac{1}{(1+ \Delta A_k )^2} \right) +  O(h). 
\end{align}
Using Lemma $\ref{Lem:Kolmo}$ and Lemma $\ref{Lem.E(xiNk)} $, we get
\begin{align}
\label{a33}
 \liminf_{n\to \infty} \frac{1}{2t_n}  \sum_{k=0}^{n-1} \xi_p(N_k) 
    & \stackrel{\text{a.s.}}{\ge} \liminf_{n\to \infty}  \frac{1}{2t_n}  \sum_{k=0}^{n-1} O(h^{\frac{3}{2}}) \dfrac{ 1 }{(1+ \Delta A_k )^3} \notag\\
     &  = Ch^{\frac{1}{2}} \left(  \liminf_{n\to \infty}   \frac{1}{2n}  \sum_{k=0}^{n-1} \frac{1}{(1+ \Delta A_k )^3} \right) = O(h^{\frac 12}). 
\end{align}
Thanks to $\eqref{a0'},$ $\eqref{a11},$ $\eqref{a22}$ and $\eqref{a33}$, we obtain
\begin{align*}
     \liminf_{n\to \infty} \frac{1}{t_n} \log |X_h(t_n)| &\stackrel{\text{a.s.}}{\ge}  \liminf_{n\to \infty} \left(  \frac{1}{n}\sum_{k=0}^{n-1} \dfrac{\log |1+ \Delta A_k |}{h}\right)   + \frac{\alpha^2}{2}\liminf_{n\to\infty}\left(\dfrac{1}{n}\sum_{k=0}^{n-1} \dfrac{1}{(1+ \Delta A_k )^2} \right) \notag \\
    & \quad\quad   - \alpha^2\limsup_{n\to\infty}\left(\dfrac{1}{n}\sum_{k=0}^{n-1} \dfrac{1}{(1+ \Delta A_k )^2} \right) + O(h^{\frac{1}{2}}). 
\end{align*}
This finishes the desired proof.
\end{proof}
\subsection{$p$-moment stability - Proof of Theorem \ref{Thm2}} 


\begin{lemma}
 \label{Lem.E(PsiNk)}  
For every $k\in \mathbb{N}$, it holds that
\begin{align*}
  &\mathbb{E} \left[\Psi_\rho \left(N_k \right) \right] =  O(h^{3/2}) \dfrac{1}{(1+\Delta A_k)^{3-p}}, 
\end{align*}
where 
\begin{align*}
    \Psi_\rho(N_k (\omega)):=  
    \left\{ \begin{array}{llrll} 
    \begin{pmatrix}
    p/2 \\
    4
    \end{pmatrix}  \dfrac{N_k^4}{(1+ \Delta A_k )^{8-p}} & \text{if}& \omega \in \Omega_{k,1},
    \vspace{0.15cm}\\
    \begin{pmatrix}
    p/2 \\
    3
    \end{pmatrix}  \dfrac{m ^{3-p/2} N_k^3}{(1+ \Delta A_k )^{6-p}}  & \text{if}& \omega \in \Omega_{k,2},
    \end{array} \right.
\end{align*}
where $\Omega_{k,1}$ and $\Omega_{k,2}$ are defined in \eqref{xirho}.
\end{lemma}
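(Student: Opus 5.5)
The plan is to mirror the proof of Lemma \ref{Lem.E(xiNk)}, replacing the logarithmic coefficients by the binomial ones. First, by Lemma \ref{Le:Est:Nk}, for $0<h<h_\ast$ we have $\Omega=\Omega_{k,1}\cup\Omega_{k,2}$ almost surely. Hence $\Psi_\rho(N_k)$ is well defined, and
\begin{align*}
\mathbb{E}\left[\Psi_\rho(N_k)\right] = \binom{p/2}{4}\frac{\mathbb{E}\big[N_k^4\mathbf 1_{\Omega_{k,1}}\big]}{(1+\Delta A_k)^{8-p}} + \binom{p/2}{3}\frac{m^{3-p/2}\,\mathbb{E}\big[N_k^3\mathbf 1_{\Omega_{k,2}}\big]}{(1+\Delta A_k)^{6-p}}.
\end{align*}
As in Lemma \ref{Lem.E(xiNk)}, I write $N_k=\mathcal H_k(\Delta W_k/\sqrt h)$ with the explicit polynomial $\mathcal H_k$. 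Each truncated moment then becomes an integral of $\mathcal H_k(y)^j$ against the Gaussian density over the sets $\{\mathcal H_k\ge 0\}$ and $\{-\frac{m-1}{m}(1+\Delta A_k)^2<\mathcal H_k<0\}$.

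The key step is to bound these truncated moments. Set $Z_k=Y_k-\frac{\alpha^2}{2}h$, so that $N_k=2(1+\Delta A_k)Z_k+Z_k^2$. Since $Z_k=\alpha\Delta W_k+O(h)$ in every $L^q$, we have $\|Z_k\|_{L^q}=O(h^{1/2})$.

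For the quartic term I bound the truncated moment by the full one:
\begin{align*}
\mathbb{E}\big[N_k^4\mathbf 1_{\Omega_{k,1}}\big]\le \mathbb{E} N_k^4\le C\big((1+\Delta A_k)^4 h^2+h^4\big).
\end{align*}
Dividing by $(1+\Delta A_k)^{8-p}$ and using $1+\Delta A_k\ge 1+\gamma_\ast>1/2$ gives a term of order $h^2(1+\Delta A_k)^{p-4}$. This is $O(h^{3/2})(1+\Delta A_k)^{p-3}$, since $(1+\Delta A_k)^{-1}$ is bounded.

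For the cubic term, on $\Omega_{k,2}$ we have $|N_k|<(1+\Delta A_k)^2$. I still bound it crudely:
\begin{align*}
\big|\mathbb{E}\big[N_k^3\mathbf 1_{\Omega_{k,2}}\big]\big|\le \mathbb{E}|N_k|^3\le C(1+\Delta A_k)^3\,\mathbb{E}|Z_k|^3 + C\,\mathbb{E}|Z_k|^6 = C(1+\Delta A_k)^3 h^{3/2}+O(h^3).
\end{align*}
This step is what produces the $h^{3/2}$ rather than $h^2$: the odd moment cannot use cancellation because of the indicator, which is exactly why the statement only claims $O(h^{3/2})$. Dividing by $(1+\Delta A_k)^{6-p}$ gives $O(h^{3/2})(1+\Delta A_k)^{p-3}$. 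The $O(h^3)$ remainder is absorbed using the lower bound on $1+\Delta A_k$.

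The main obstacle is making the constants uniform in $k$. This means keeping track of the powers of $(1+\Delta A_k)$ so that every remainder can be written as $(1+\Delta A_k)^{p-3}$ times $O(h^{3/2})$. I will use both halves of \eqref{lowerbound_a}: $1+\Delta A_k>1/2+\text{const}$ and $\sup_k\Delta A_k<\infty$. Together these make $(1+\Delta A_k)^{\pm1}$ bounded, which lets surplus powers be traded freely. The Gaussian moment bounds for $\mathbb{E}|Z_k|^q$ come from Lemma \ref{Lem:BasicProp_0} together with H\"older's inequality for the odd absolute moments. With $h<h^\ast\le h_\ast<1$, all higher powers of $h$ are dominated by $h^{3/2}$.
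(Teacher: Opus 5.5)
Your proposal is correct. It shares only the set-up with the paper. Both use Lemma \ref{Le:Est:Nk} to get $\Omega=\Omega_{k,1}\cup\Omega_{k,2}$ and split $\mathbb{E}[\Psi_\rho(N_k)]$ accordingly. The paper then writes out the two truncated Gaussian integrals of $\mathcal H_k^4$ and $\mathcal H_k^3$ and simply asserts that they are $O(h^{3/2})(1+\Delta A_k)^{p-3}$. You instead drop the indicators and bound by the full moments $\mathbb{E}N_k^4$ and $\mathbb{E}|N_k|^3$, using $N_k=2(1+\Delta A_k)Z_k+Z_k^2$ and $\|Z_k\|_{L^q}=O(h^{1/2})$.

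Your route supplies the estimate the paper leaves implicit, and this buys three things:
\begin{itemize}
\item The constant is visibly uniform in $k$.
\item The source of the exponent $3/2$ is identified. Your remark is right: for $h<h_\ast$ one has $2(1+\Delta A_k)+Z_k>\frac12$, so $N_k<0$ iff $Z_k<0$. Hence the truncated cubic moment is genuinely of order $h^{3/2}$ when $\alpha\neq 0$.
\item Working with $Z_k$ sidesteps the explicit polynomial $\mathcal H_k$. Its displayed form in the paper mistakenly squares the term $-\alpha^2h(\alpha\sqrt h\,y+\frac{\alpha^2h}{2}y^2)$.
\end{itemize}
The paper's region-wise representation could in principle give the leading constant and sign of the $h^{3/2}$ term, but that is not needed for the statement.

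Two small remarks. The integral representation you set up at the start plays no role in your estimates and can be omitted. Also, only the lower half of \eqref{lowerbound_a} is needed, namely $1+\Delta A_k\ge 1+\gamma_\ast>1/2$. Every trade of powers of $1+\Delta A_k$ you make goes toward more negative exponents, so the supremum bound is superfluous.
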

\begin{proof}
	Using the definition of $\mathcal H_k(y)$ in Lemma $\ref{Lem.E(xiNk)}$, we can rewrite 
	\begin{align*}
		\Psi_\rho(N_k) = (\Psi_\rho \circ \mathcal H_k ) \left(\dfrac{\Delta W_k}{\sqrt{h}} \right), \quad \forall k \ge 0.
	\end{align*}
	The expectation of $\Psi_\rho(N_k)$ can be estimated by 
	\begin{align*}
		&\mathbb{E} \left[\Psi_\rho \left(N_k \right) \right] \notag\\
		& =  \dfrac{p(2-p)(4-p)}{48} \times \dfrac{m^{3-p/2}}{(1+ \Delta A_k )^{6-p} \sqrt{2\pi}} \int_{\{ - \frac{m-1}{m} (1+ \Delta A_k )^2 < \mathcal H_k(y) < 0 \}} \Big( \mathcal H_k(y)\Big)^3  e^{-\frac{y^2}{2}}dy \notag\\
		& \quad - \dfrac{p(2-p)(4-p) (6-p)}{384} \times  \dfrac{1}{(1+ \Delta A_k )^{8-p} \sqrt{2\pi}} \int_{\{ \mathcal H_k(y) \ge 0 \}} \Big( \mathcal H_k(y)\Big)^4 e^{-\frac{y^2}{2}}dy\notag\\
		& = \dfrac{p(2-p)(4-p)}{48} \times \dfrac{m^{3-p/2}}{(1+ \Delta A_k )^{6-p} \sqrt{2\pi}} \int_{\{ - \frac{m-1}{m} (1+ \Delta A_k )^2 < \mathcal H_k(y) < 0 \}} \Bigg[ 2 \left( 1 + \Delta A_k \right) \left( \alpha \sqrt{h} y +\dfrac{\alpha^2h}{2} y^2\right)\notag\\
		& \quad  - \alpha^2h \left( \alpha \sqrt{h} y +\dfrac{\alpha^2h}{2} y^2 \right)^2  +  \left(\alpha \sqrt{h} y +\dfrac{\alpha^2h}{2} y^2\right)^2 +\left( \frac{\alpha^4}{4}h - \alpha^2  (1+  \Delta A_k ) \right) h \Bigg]^3  e^{-\frac{y^2}{2}}dy \notag\\
		& \quad - \dfrac{p(2-p)(4-p)(6-p)}{384} \times  \dfrac{1}{(1+ \Delta A_k )^{8-p} \sqrt{2\pi}} \int_{\{ \mathcal H_k(y) \ge 0 \}} \Bigg[ 2 \left( 1 + \Delta A_k \right) \left( \alpha \sqrt{h} y +\dfrac{\alpha^2h}{2} y^2\right)\notag\\
		& \quad - \alpha^2h \left( \alpha \sqrt{h} y +\dfrac{\alpha^2h}{2} y^2 \right)^2  +  \left(\alpha \sqrt{h} y +\dfrac{\alpha^2h}{2} y^2\right)^2 +\left( \frac{\alpha^4}{4}h - \alpha^2  (1+  \Delta A_k ) \right) h \Bigg]^4  e^{-\frac{y^2}{2}}dy \notag\\
		&= O(h^{3/2}) \dfrac{1}{(1+\Delta A_k)^{3-p}}.
	\end{align*}
	This finishes the proof.
\end{proof}
     
We are now in a position to prove Theorem $\ref{Thm2}$.
\begin{proof}[Proof of Theorem $\ref{Thm2}$]
It follows from \eqref{Nk} and \eqref{Gen} that
\begin{align}
	\label{Xp}
	\mathbb{E} |X_h(t_n)|^p 
	 = |x_0|^p \prod_{k=0}^{n-1} \mathbb{E} \, \Big|  1 + \Delta A_k -\dfrac{\alpha^2}{2} h +  {Y}_k   \Big|^p 
	 = |x_0|^p \prod_{k=0}^{n-1} \mathbb{E} \, \Big [  (1 + \Delta A_k)^2 + {N}_k   \Big]^{p/2}.
\end{align}
To calculate the expectations on the right-hand side, we will expand each product factor using the Taylor expansion given in Lemma \ref{Lem:TayTech} and truncating them at suitable orders. 

\medskip	
\textit{\underline{Upper estimate}}: Using inequality \eqref{Inq:Up}, we get
\begin{align*}
&\Big [(1 + \Delta A_k)^2 + {N}_k   \Big]^{p/2} \notag\\
&\le (1+ \Delta A_k)^p +\dfrac{p}{2}\dfrac{N_k}{(1+ \Delta A_k)^{2-p}}+\begin{pmatrix}
    p/2 \\
    2
    \end{pmatrix}\dfrac{N_k^2}{(1+ \Delta A_k)^{4-p}}  +\begin{pmatrix}
    p/2 \\
    3
    \end{pmatrix}\dfrac{N_k^3}{(1+ \Delta A_k)^{6-p}} .
    \end{align*}
Taking the expectation of two sides and applying Lemma \ref{Lem:ComputeYkj}, we have
\begin{align}
&\mathbb{E} \Big [(1 + \Delta A_k)^2 + {N}_k   \Big]^{p/2} \notag\\
& \le  (1+ \Delta A_k)^p + \dfrac{p}{2}\dfrac{1}{(1+ \Delta A_k)^{2-p}} \left( \alpha^2h+\dfrac{1}{2}\alpha^4h^2 \right) \notag\\
&\quad - \dfrac{1}{(1+ \Delta A_k)^{4-p}} \dfrac{p(2-p)}{8} \left(4(1+ \Delta A_k )^2 \left( \alpha^2 h + \dfrac{1}{2}\alpha^4h^2  \right) + 4 (1+ \Delta A_k) \beta_3 + O(h^2) \right) \notag\\
&\quad + \dfrac{1}{(1+ \Delta A_k)^{6-p}} \dfrac{p(2-p)(4-p)}{48} \left(8 (1+ \Delta A_k )^3 \beta_3 + 12 (1+ \Delta A_k )^2 \beta_4 
    + 6 (1+ \Delta A_k ) \beta_5 + O(h^2)\right) \notag\\
&  = (1+ \Delta A_k)^p - \dfrac{p(1-p)}{2} \dfrac{1}{(1+ \Delta A_k)^{2-p}} \alpha^2h + O(h^2).
\label{Np2}
\end{align}
Thanks to this and the expression of $ \mathbb{E} |X_h(t_n)|^p$ in \eqref{Xp} we get 
\begin{align}\mathbb{E} |X_h(t_n)|^p  \le |x_0|^p \prod_{k=0}^{n-1} \left((1+ \Delta A_k)^p - \dfrac{p(1-p)}{2} \dfrac{1}{(1+ \Delta A_k)^{2-p}} \alpha^2h + O(h^2)\right).
\label{expp}
\end{align}
Taking the logarithm of two sides of $\eqref{expp}$ and analyzing the limit superior, we obtain
\begin{align*}
   & \limsup_{n\to \infty}\dfrac{1}{t_n}\log \mathbb{E} |X_h(t_n)|^p \notag \\
   & \le  \limsup_{n\to \infty}\dfrac{1}{t_n} \left ( \log |x_0|^p + \sum_{k=0}^{n-1} \log \left((1+ \Delta A_k)^p - \dfrac{p(1-p)}{2} \dfrac{1}{(1+ \Delta A_k)^{2-p}} \alpha^2h + O(h^2) \right) \right) \notag\\
   & =  \limsup_{n\to \infty}\dfrac{1}{t_n} \left ( \sum_{k=0}^{n-1} \log \left((1+ \Delta A_k)^p - \dfrac{p(1-p)}{2} \dfrac{1}{(1+ \Delta A_k)^{2-p}} \alpha^2h + O(h^2) \right) \right)\notag\\
   & = \limsup_{n\to \infty}\dfrac{1}{t_n} \left ( \sum_{k=0}^{n-1} \log \left((1+ \Delta A_k)^p - p(1-p) \dfrac{1}{(1+ \Delta A_k)^{2-p}} \alpha^2h + O(h^2) \right. \right. \notag\\
   & \quad \quad \quad \quad \quad \quad \quad \quad \quad \quad\quad \left. \left. + \dfrac{p(1-p)}{2} \dfrac{1}{(1+ \Delta A_k)^{2-p}} \alpha^2h\right) \right).
   \end{align*} 
Applying inequality \eqref{Inq:Up} with $$\rho= (1+ \Delta A_k)^p - p(1-p) \dfrac{1}{(1+ \Delta A_k)^{2-p}} \alpha^2h + O(h^2)$$ and $$x= \dfrac{p(1-p)}{2} \dfrac{1}{(1+ \Delta A_k)^{2-p}} \alpha^2h,$$ we get
\begin{align*}
 &\log \left((1+ \Delta A_k)^p - p(1-p) \dfrac{1}{(1+ \Delta A_k)^{2-p}} \alpha^2h + O(h^2) + \dfrac{p(1-p)}{2} \dfrac{1}{(1+ \Delta A_k)^{2-p}} \alpha^2h\right) \notag\\
 &\leq \log \left( (1+ \Delta A_k)^p - p(1-p) \dfrac{1}{(1+ \Delta A_k)^{2-p}} \alpha^2h + O(h^2) \right) \notag\\
 & \quad + \dfrac{p(1-p) (1+ \Delta A_k)^{p-2} \alpha^2h}{2\left((1+ \Delta A_k)^p - p(1-p) (1+ \Delta A_k)^{p-2} \alpha^2h + O(h^2)\right)}\notag\\
  & \quad - \dfrac{p^2(1-p)^2 (1+ \Delta A_k)^{2p-4} \alpha^4h^2}{8\left( (1+ \Delta A_k)^p - p(1-p) (1+ \Delta A_k)^{p-2} \alpha^2h + O(h^2)\right )^2} \notag\\
  & \quad + \dfrac{p^3(1-p)^3 (1+ \Delta A_k)^{3p-6} \alpha^6h^3}{24\left( (1+ \Delta A_k)^p - p(1-p) (1+ \Delta A_k)^{p-2} \alpha^2h + O(h^2)\right )^3}.
\end{align*}
Thus, 
\begin{align*}
 & \limsup_{n\to \infty}\dfrac{1}{t_n}\log \mathbb{E} |X_h(t_n)|^p \notag \\
 &  \le \limsup_{n\to \infty}\dfrac{1}{t_n} \Bigg[ \sum_{k=0}^{n-1} \log \left( (1+ \Delta A_k)^p - p(1-p) \dfrac{1}{(1+ \Delta A_k)^{2-p}} \alpha^2h + O(h^2) \right) \notag\\
  & \qquad\qquad\qquad + \sum_{k=0}^{n-1} \dfrac{p(1-p) (1+ \Delta A_k)^{p-2} \alpha^2h}{2\left((1+ \Delta A_k)^p - p(1-p) (1+ \Delta A_k)^{p-2} \alpha^2h + O(h^2)\right)}\notag\\
  & \qquad\qquad\qquad  - \sum_{k=0}^{n-1} \dfrac{p^2(1-p)^2 (1+ \Delta A_k)^{2p-4} \alpha^4h^2}{8\left( (1+ \Delta A_k)^p - p(1-p) (1+ \Delta A_k)^{p-2} \alpha^2h + O(h^2)\right )^2} \notag\\
  & \qquad\qquad\qquad +  \sum_{k=0}^{n-1} \dfrac{p^3(1-p)^3 (1+ \Delta A_k)^{3p-6} \alpha^6h^3}{24\left( (1+ \Delta A_k)^p - p(1-p) (1+ \Delta A_k)^{p-2} \alpha^2h + O(h^2)\right )^3} \Bigg] \notag\\
   &  \le \limsup_{n\to \infty}\dfrac{1}{n} \Bigg[ \sum_{k=0}^{n-1} \dfrac{1}{h} \log \left( (1+ \Delta A_k)^p - p(1-p) \dfrac{1}{(1+ \Delta A_k)^{2-p}} \alpha^2h + O(h^2) \right) \notag\\
  & \qquad\qquad\qquad +  \sum_{k=0}^{n-1} \dfrac{p(1-p) (1+ \Delta A_k)^{p-2} \alpha^2 }{2\left((1+ \Delta A_k)^p - p(1-p) (1+ \Delta A_k)^{p-2} \alpha^2  + O(h^2)\right)} \Bigg] \notag\\
  & \qquad\qquad\qquad  - Ch\liminf_{n\to \infty}\dfrac{1}{n} \sum_{k=0}^{n-1} \dfrac{(1+ \Delta A_k)^{2p-4}}{8\left( (1+ \Delta A_k)^p - p(1-p) (1+ \Delta A_k)^{p-2} \alpha^2h + O(h^2)\right )^2} \notag\\
  & \qquad\qquad\qquad +  Ch^2\limsup_{n\to \infty}\dfrac{1}{n} \sum_{k=0}^{n-1} \dfrac{(1+ \Delta A_k)^{3p-6}}{24\left( (1+ \Delta A_k)^p - p(1-p) (1+ \Delta A_k)^{p-2} \alpha^2h + O(h^2)\right )^3}. 
\end{align*}
Thus, we obtain the desired upper bound
\begin{align*}
    \limsup_{n\to \infty}\dfrac{1}{t_n}\log \mathbb{E} |X_h(t_n)|^p  & \stackrel{\text{a.s.}} \le \limsup_{n\to \infty}\dfrac{1}{n} \Bigg[ \sum_{k=0}^{n-1}  \dfrac{1}{h} \log \left( (1+ \Delta A_k)^p - p(1-p) \dfrac{1}{(1+ \Delta A_k)^{2-p}} \alpha^2h + O(h^2) \right) \notag \\
    & \quad  +  \sum_{k=0}^{n-1} \dfrac{p(1-p) (1+ \Delta A_k)^{p-2} \alpha^2}{2\left((1+ \Delta A_k)^p - p(1-p) (1+ \Delta A_k)^{p-2} \alpha^2h + O(h^2)\right)}  \Bigg ] +O(h).
\end{align*}

\medskip
\noindent \textit{\underline{Lower estimate}}: 
Applying inequality in Remark \ref{remark:Taylor}, we have   
\begin{align}
\label{plower}
  \Big [(1 + \Delta A_k)^2 + {N}_k   \Big]^{p/2} \ge (1+ \Delta A_k)^p +\dfrac{p}{2}\dfrac{N_k}{(1+ \Delta A_k)^{2-p}}+\begin{pmatrix}
    p/2 \\
    2
   \end{pmatrix}\dfrac{N_k^2}{(1+ \Delta A_k)^{4-p}}  + \Psi_\rho(N_k),
\end{align}
where $\Psi_{\rho}(N_k)$ is defined in Lemma \ref{Lem.E(PsiNk)}.
Taking the expectation of two sides of \eqref{plower} and using Lemma \ref{Lem.E(PsiNk)}, we obtain
\begin{align}
&\mathbb{E} \Big [(1 + \Delta A_k)^2 + {N}_k   \Big]^{p/2} \notag\\
&\ge  (1+ \Delta A_k)^p + \dfrac{p}{2}\dfrac{1}{(1+ \Delta A_k)^{2-p}} \left( \alpha^2h+\dfrac{1}{2}\alpha^4h^2 \right)\notag\\
&\quad - \dfrac{1}{(1+ \Delta A_k)^{4-p}} \dfrac{p(2-p)}{8} \left(4(1+ \Delta A_k )^2 \left( \alpha^2 h + \dfrac{1}{2}\alpha^4h^2  \right) + 4 (1+ \Delta A_k) \beta_3 + O(h^2) \right) \notag\\
& \quad +O(h^{\frac{3}{2}}) \dfrac{1}{(1+\Delta A_k)^{3-p}} \notag\\ 
& = (1+ \Delta A_k)^p - \dfrac{p(1-p)}{2} \dfrac{1}{(1+ \Delta A_k)^{2-p}} \alpha^2h + O(h^{\frac{3}{2}}) \dfrac{1}{(1+\Delta A_k)^{3-p}}.
\label{Np3}
\end{align}
Thanks to the expression of $ \mathbb{E} |X_h(t_n)|^p$ and $\eqref{Np3},$ we get
\begin{align}\mathbb{E} |X_h(t_n)|^p  \ge |x_0|^p \prod_{k=0}^{n-1} \left((1+ \Delta A_k)^p - \dfrac{p(1-p)}{2} \dfrac{1}{(1+ \Delta A_k)^{2-p}} \alpha^2h + O(h^{\frac{3}{2}}) \dfrac{1}{(1+\Delta A_k)^{3-p}}\right).
\label{exppp}
\end{align}
Taking the logarithm of two sides of $\eqref{exppp}$, we have
\begin{align}
 &\log \mathbb{E} |X_h(t_n)|^p \notag\\
   & \ge  \log |x_0|^p + \sum_{k=0}^{n-1} \log \left((1+ \Delta A_k)^p - \dfrac{p(1-p)}{2} \dfrac{1}{(1+ \Delta A_k)^{2-p}} \alpha^2h + O(h^{\frac{3}{2}}) \dfrac{1}{(1+\Delta A_k)^{3-p}}   \right). 
   \label{upE}
\end{align}
We rewrite
\begin{align*}
&\log \left((1+ \Delta A_k)^p - \dfrac{p(1-p)}{2} \dfrac{1}{(1+ \Delta A_k)^{2-p}} \alpha^2h + O(h^{\frac{3}{2}}) \dfrac{1}{(1+\Delta A_k)^{3-p}} \right)\notag\\
    = &  \log \left((1+ \Delta A_k)^p - p(1-p) \dfrac{1}{(1+ \Delta A_k)^{2-p}} \alpha^2h + O(h^{\frac{3}{2}}) \dfrac{1}{(1+\Delta A_k)^{3-p}} + \dfrac{p(1-p)}{2} \dfrac{1}{(1+ \Delta A_k)^{2-p}} \alpha^2h\right). 
\end{align*}
Applying Lemma \ref{Lem:TayTech} with $$\rho= (1+ \Delta A_k)^p - p(1-p) \dfrac{1}{(1+ \Delta A_k)^{2-p}} \alpha^2h + O(h^{\frac{3}{2}}) \dfrac{1}{(1+\Delta A_k)^{3-p}} $$ and $$x= \dfrac{p(1-p)}{2} \dfrac{1}{(1+ \Delta A_k)^{2-p}} \alpha^2h \ge 0,$$ we get
\begin{align}
    & \log \left((1+ \Delta A_k)^p - p(1-p) \dfrac{1}{(1+ \Delta A_k)^{2-p}} \alpha^2h + O(h^{\frac{3}{2}})\dfrac{1}{(1+\Delta A_k)^{3-p}} + \dfrac{p(1-p)}{2} \dfrac{1}{(1+ \Delta A_k)^{2-p}} \alpha^2h\right) \notag\\
   & \ge  \log \left( (1+ \Delta A_k)^p - p(1-p) \dfrac{1}{(1+ \Delta A_k)^{2-p}} \alpha^2h + O(h^{\frac{3}{2}}) \dfrac{1}{(1+\Delta A_k)^{3-p}} \right)\notag \\
   & \qquad\quad + \dfrac{p(1-p) (1+ \Delta A_k)^{p-2} \alpha^2h}{2\left((1+ \Delta A_k)^p - p(1-p) (1+ \Delta A_k)^{p-2} \alpha^2h + O(h^{\frac{3}{2}}) (1+\Delta A_k)^{p-3} \right)} \notag \\
   & \qquad\quad - \dfrac{p^2(1-p)^2 (1+ \Delta A_k)^{2p-4} \alpha^4h^2}{8\left( (1+ \Delta A_k)^p - p(1-p) (1+ \Delta A_k)^{p-2} \alpha^2h + O(h^{\frac{3}{2}}) (1+\Delta A_k)^{p-3} \right )^2} \notag\\
   &\qquad\quad  + \dfrac{p^3(1-p)^3 (1+ \Delta A_k)^{3p-6} \alpha^6h^3}{24\left( (1+ \Delta A_k)^p - p(1-p) (1+ \Delta A_k)^{p-2} \alpha^2h + O(h^{\frac{3}{2}}) (1+\Delta A_k)^{p-3}\right )^3}
   \notag \\
    &\qquad\quad  - \dfrac{p^4(1-p)^4 (1+ \Delta A_k)^{4p-8} \alpha^8h^4}{64\left( (1+ \Delta A_k)^p - p(1-p) (1+ \Delta A_k)^{p-2} \alpha^2h + O(h^{\frac{3}{2}}) (1+\Delta A_k)^{p-3}\right )^4}\notag\\
      & =   \log \left( (1+ \Delta A_k)^p - p(1-p) \dfrac{1}{(1+ \Delta A_k)^{2-p}} \alpha^2h + O(h^{\frac{3}{2}}) \dfrac{1}{(1+\Delta A_k)^{3-p}} \right)\notag \\
   & \qquad\quad + \dfrac{p(1-p) (1+ \Delta A_k) \alpha^2h}{2\left((1+ \Delta A_k)^3 - p(1-p) (1+ \Delta A_k) \alpha^2h + O(h^{\frac{3}{2}})  \right)} \notag \\
   &\qquad \quad - \dfrac{p^2(1-p)^2 (1+ \Delta A_k)^2 \alpha^4h^2}{8\left( (1+ \Delta A_k)^3 - p(1-p) (1+ \Delta A_k) \alpha^2h + O(h^{\frac{3}{2}}) \right )^2} \notag\\
   &\qquad\quad  + \dfrac{p^3(1-p)^3 (1+ \Delta A_k)^3 \alpha^6h^3}{24\left( (1+ \Delta A_k)^3 - p(1-p) (1+ \Delta A_k) \alpha^2h + O(h^{\frac{3}{2}}) \right )^3}
   \notag \\
    &\qquad\quad  - \dfrac{p^4(1-p)^4 (1+ \Delta A_k)^4 \alpha^8h^4}{64\left( (1+ \Delta A_k)^3 - p(1-p) (1+ \Delta A_k) \alpha^2h + O(h^{\frac{3}{2}}) \right )^4}.
   \label{uplog}
\end{align}
Combining \eqref{upE} and \eqref{uplog}, we have 
\begin{align*}
& \log \mathbb{E} |X_h(t_n)|^p \notag \\
&  \ge \log |x_0|^p +\sum_{k=0}^{n-1} \Bigg[  \log \left( (1+ \Delta A_k)^p - p(1-p) \dfrac{1}{(1+ \Delta A_k)^{2-p}} \alpha^2h + O(h^{\frac{3}{2}}) \dfrac{1}{(1+\Delta A_k)^{3-p}} \right)\notag \\
   & \quad + \dfrac{p(1-p) (1+ \Delta A_k) \alpha^2h}{2\left((1+ \Delta A_k)^3 - p(1-p) (1+ \Delta A_k) \alpha^2h + O(h^{\frac{3}{2}})  \right)} \notag \\
   & \quad - \dfrac{p^2(1-p)^2 (1+ \Delta A_k)^2 \alpha^4h^2}{8\left( (1+ \Delta A_k)^3 - p(1-p) (1+ \Delta A_k) \alpha^2h + O(h^{\frac{3}{2}}) \right )^2} \notag\\
   &\quad  + \dfrac{p^3(1-p)^3 (1+ \Delta A_k)^3 \alpha^6h^3}{24\left( (1+ \Delta A_k)^3 - p(1-p) (1+ \Delta A_k) \alpha^2h + O(h^{\frac{3}{2}}) \right )^3}
   \notag \\
    &\quad  - \dfrac{p^4(1-p)^4 (1+ \Delta A_k)^4 \alpha^8h^4}{64\left( (1+ \Delta A_k)^3 - p(1-p) (1+ \Delta A_k) \alpha^2h + O(h^{\frac{3}{2}}) \right )^4} \Bigg].
\end{align*}
Now we can estimate using the sub-additivity of $\liminf$
\begin{align*}
  &  \liminf_{n\to \infty}\dfrac{1}{t_n}\log \mathbb{E} |X_h(t_n)|^p \notag \\
  & \ge  \liminf_{n\to \infty}\dfrac{1}{t_n} \Bigg[  \frac 1h\sum_{k=0}^{n-1}  \log \left( (1+ \Delta A_k)^p - p(1-p) \dfrac{1}{(1+ \Delta A_k)^{2-p}} \alpha^2h + O(h^{\frac{3}{2}})  \dfrac{1}{(1+\Delta A_k)^{3-p}} \right)\notag \\
   & \qquad\quad \qquad\quad +  \sum_{k=0}^{n-1}\dfrac{p(1-p)(1+ \Delta A_k)\alpha^2}{2\left((1+ \Delta A_k)^3 - p(1-p) (1+ \Delta A_k) \alpha^2h + O(h^{\frac{3}{2}})  \right)}\Bigg] \notag \\
   & \quad  - Ch\limsup_{n\to\infty}\frac{1}{n}\sum_{k=0}^{n-1}\dfrac{ (1+ \Delta A_k)^2}{8\left( (1+ \Delta A_k)^3 - p(1-p) (1+ \Delta A_k) \alpha^2h + O(h^{\frac{3}{2}}) \right )^2} \notag\\
   &\quad   + Ch^2\liminf_{n\to\infty}\frac{1}{n}\sum_{k=0}^{n-1}\dfrac{ (1+ \Delta A_k)^3}{24\left( (1+ \Delta A_k)^3 - p(1-p) (1+ \Delta A_k) \alpha^2h + O(h^{\frac{3}{2}}) \right )^3}
   \notag \\
    &\quad   -Ch^3\limsup_{n\to\infty0}\frac{1}{n}\sum_{k=0}^{n-1}\dfrac{ (1+ \Delta A_k)^4}{64\left( (1+ \Delta A_k)^3 - p(1-p) (1+ \Delta A_k) \alpha^2h + O(h^{\frac{3}{2}}) \right )^4}\Bigg]\notag\\
    & \ge  \liminf_{n\to \infty}\dfrac{1}{n} \Bigg[ \sum_{k=0}^{n-1}  \dfrac{1}{h} \log \left( (1+ \Delta A_k)^p - p(1-p) \dfrac{1}{(1+ \Delta A_k)^{2-p}} \alpha^2h + O(h^{\frac{3}{2}})  \dfrac{1}{(1+\Delta A_k)^{3-p}} \right)  \notag\\
   & \qquad\qquad\qquad +  \sum_{k=0}^{n-1} \dfrac{p(1-p) (1+ \Delta A_k) \alpha^2}{2\left((1+ \Delta A_k)^3 - p(1-p) (1+ \Delta A_k) \alpha^2h + O(h^{\frac{3}{2}})  \right)}  \Bigg] +O(h).
\end{align*}
This finishes the desired proof.
 \end{proof}

\section{Nonlinear non-autonomous equations}\label{sec:nonlinear}
Taking the logarithms of both sides of \eqref{Nonl}, we have
\begin{align*}
     \frac{1}{t_n}\log |X_h(t_n)| =& \frac{1}{t_n}\log |x_0| +  \frac{1}{t_n}\sum_{k=0}^{n-1} \log  
 \Bigg| 1 + \Delta A_k +\gamma_{1k} \Delta W_k + \dfrac{1}{2} \gamma_{2k} \left[(\Delta W_k)^2-h \right] \Bigg| \\
= &  \frac{1}{t_n}\log |x_0| +  \frac{1}{2t_n}\sum_{k=0}^{n-1} \log \Bigg( 1 + \Delta A_k +\gamma_{1k} \Delta W_k + \dfrac{1}{2} \gamma_{2k} \left[(\Delta W_k)^2-h \right]   \Bigg)^2  .
 \end{align*}
By denoting
\begin{align*}
\mathcal{Y}_k:= & \gamma_{1k} \Delta W_k + \dfrac{1}{2} \gamma_{2k} (\Delta W_k)^2,  \\
\mathcal{N}_k:= & 2(1 + \Delta A_k)\left(\mathcal{Y}_k -\dfrac{\gamma_{2k} h}{2} \right) + \left( \mathcal{Y}_k -\dfrac{\gamma_{2k} h}{2} \right)^2,
\end{align*}
we can rewrite
\begin{align*}
  \frac{1}{t_n}\log |X_h(t_n)| = \frac{1}{t_n}\log |x_0| +  \frac{1}{2t_n}\sum_{k=0}^{n-1} \log \left( (1+\Delta A_k)^2 + \mathcal{N}_k \right).  \end{align*}

\subsection{Lower bounds of $\mathcal{N}_k$ and expectation estimates}
\begin{lemma}
\label{Lem.Non.h}
 Assume \eqref{lowerbound_a} and \eqref{F}. Then, if $m>\frac{(4+4\gamma_*)^2}{(1+2\gamma_*)^2}$ then 
\begin{align*}
    \mathcal N_k  \stackrel{\text{a.s.}}{>} -\dfrac{m-1}{m} (1+\Delta A_k)^2 , \quad \forall  
     k \ge 0, 
\end{align*}
for any 
\begin{align*}
    0<h < h_{\ddagger}:= \dfrac{2(1-1/ \sqrt{m}) (1+\gamma_{\ast})-(\frac{3}{2}+\gamma_*)}{M_2}. 
\end{align*} 
\end{lemma}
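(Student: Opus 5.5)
The plan is to repeat the argument of Lemma \ref{Le:Est:Nk}. The only new ingredient is a deterministic lower bound for $\mathcal Y_k$, which now has random, state-dependent coefficients $\gamma_{1k},\gamma_{2k}$. We write $\gamma_{1k},\gamma_{2k}$ in terms of $\kappa_k:=\frac{X_h(t_k)\partial_x f(t_k,X_h(t_k))}{f(t_k,X_h(t_k))}$. Since
\[
\gamma_{2k}=\frac{f\,\partial_x f}{x}=\Big(\frac{f}{x}\Big)^2\frac{x\partial_x f}{f}=\gamma_{1k}^2\kappa_k ,
\]
assumption \eqref{F} gives $\gamma_{2k}>0$ and $\kappa_k\ge \max\{\tfrac12,\tfrac{2}{3+2\gamma_*}\}$. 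Completing the square in $\Delta W_k$ then yields
\[
\mathcal Y_k=\frac{\gamma_{2k}}{2}\Big(\Delta W_k+\frac{\gamma_{1k}}{\gamma_{2k}}\Big)^2-\frac{\gamma_{1k}^2}{2\gamma_{2k}}\ \ge\ -\frac{1}{2\kappa_k}\ \ge\ -\frac{3+2\gamma_*}{4}\quad\text{a.s.}
\]
This replaces the bound $Y_k\ge -\frac12$ used in the linear case. Together with $\gamma_{2k}\le M_2$, it gives $\mathcal Y_k-\frac{\gamma_{2k}h}{2}\ge -\frac12\big(\frac{3}{2}+\gamma_*+M_2h\big)$ a.s.

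Next we use the same algebraic identity as in the proof of Lemma \ref{Le:Est:Nk}, with $Z_k:=\mathcal Y_k-\frac{\gamma_{2k}h}{2}$:
\[
\mathcal N_k+\frac{m-1}{m}(1+\Delta A_k)^2=\Big(\frac{\sqrt m-1}{\sqrt m}(1+\Delta A_k)+Z_k\Big)^2+\frac{2\sqrt m-2}{m}(1+\Delta A_k)^2+\frac{2}{\sqrt m}(1+\Delta A_k)Z_k .
\]
Dropping the square, it suffices to show that the last two terms are positive. By the lower bound on $Z_k$, their sum is at least
\[
\frac{1}{\sqrt m}(1+\Delta A_k)\Big(2\big(1-\tfrac{1}{\sqrt m}\big)(1+\Delta A_k)-\big(\tfrac32+\gamma_*\big)-M_2h\Big).
\]
Since $1+\Delta A_k\ge 1+\gamma_*>\frac12>0$ by \eqref{lowerbound_a}, the bracket is increasing in $1+\Delta A_k$. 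Hence it is at least $2(1-\frac1{\sqrt m})(1+\gamma_*)-(\frac32+\gamma_*)-M_2h$, which is positive exactly when $h<h_\ddagger$.

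Finally, we check that $h_\ddagger>0$, which is where the hypothesis on $m$ enters. The condition $2(1-\frac1{\sqrt m})(1+\gamma_*)>\frac32+\gamma_*$ is equivalent to $\frac{1}{\sqrt m}<\frac{1+2\gamma_*}{4+4\gamma_*}$, that is, $m>\frac{(4+4\gamma_*)^2}{(1+2\gamma_*)^2}$. This uses $1+2\gamma_*>0$, which again follows from \eqref{lowerbound_a}.

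The main obstacle is the first step. One has to recognise that \eqref{F} controls precisely the ratio $\gamma_{1k}^2/\gamma_{2k}=1/\kappa_k$, and that the specific constant $\frac{2}{3+2\gamma_*}$ in \eqref{F} is tuned so that this ratio produces the $\frac32+\gamma_*$ appearing in $h_\ddagger$. The bound must hold uniformly in the random state $X_h(t_k)$, but this is automatic because \eqref{F} holds for all $x\neq0$ and all $t$. The remaining steps are a verbatim adaptation of the linear case.
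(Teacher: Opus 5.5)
Your proposal is correct and follows essentially the same route as the paper. Both arguments use the same three ingredients:
\begin{itemize}
\item the bound $\gamma_{1k}^2/\gamma_{2k}\le \frac32+\gamma_*$ coming from \eqref{F};
\item completing the square to get $\mathcal Y_k\ge -\gamma_{1k}^2/(2\gamma_{2k})$;
\item the same square-completion identity as in Lemma \ref{Le:Est:Nk}, together with monotonicity in $1+\Delta A_k\ge 1+\gamma_*$ and the check that $h_\ddagger>0$ exactly when $m>(4+4\gamma_*)^2/(1+2\gamma_*)^2$.
\end{itemize}
Your version is slightly tidier in two respects. You bound $Z_k$ uniformly up front. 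You also state explicitly that $\gamma_{2k}=\gamma_{1k}^2\kappa_k>0$, which the paper uses tacitly when it writes $\sqrt{\gamma_{2k}}$.
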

\begin{proof}
Thanks to the definitions of $\gamma_{1k}$, $\gamma_{2k}$ and assumption \eqref{F}, we have
\begin{align*}
  \dfrac{ \gamma_{1k}^2}{\gamma_{2k}} = \left( \dfrac{f(t_n, X_h(t_k))}{X_h(t_k)} \right)^2 \dfrac{  X_h(t_k) }{   f(t_k, X_h(t_k)) \partial_xf(t_k, X_h(t_k))} = \dfrac{f(t_n, X_h(t_k))}{X_h(t_k) \partial_xf(t_k, X_h(t_k))} \leq \frac{3}{2}+\gamma_*.
\end{align*}
Thus,
\begin{align*}
    \dfrac{2(1-1/ \sqrt{m}) (1+\gamma_{\ast})-(\frac{3}{2}+\gamma_*)}{M_2}<\dfrac{2(1-1/ \sqrt{m}) (1+\gamma_{\ast})-\dfrac{ \gamma_{1k}^2}{\gamma_{2k}}}{\gamma_{2k}}
\end{align*} 
for any $k \ge 0.$
 Under \eqref{lowerbound_a}, we have $1+\gamma_\ast >1/2$. Therefore, if $m>\frac{(4+4\gamma_*)^2}{(1+2\gamma_*)^2}$ then $$2(1-1/ \sqrt{m}) (1+\gamma_{\ast})-\bigg(\frac{3}{2}+\gamma_*\bigg)>0,$$ which consequently guarantees that  $h_{\ddagger} >0$. Additionally, we also notice that from the definition of $\mathcal Y_k$ we have
 \[
    \mathcal Y_k = \frac 12\left(\Delta W_k \sqrt{\gamma_{2k}} + \frac{\gamma_{1k}}{\sqrt{\gamma_{2k}}}\right)^2 - \frac{\gamma_{1k}^2}{2\gamma_{2k}}\ge - \frac{\gamma_{1k}^2}{2\gamma_{2k}}
 \]
 almost surely for all $k\ge 0$. Hence,  
\begin{align*}
& \dfrac{2\sqrt{m}-2}{m} (1+\Delta A_k)^2 + \frac{2}{\sqrt{m}} (1+\Delta A_k) \left( {\mathcal Y}_k -\dfrac{\gamma_{2k}}{2} h  \right) \\
& \stackrel{\text{a.s.}}{\ge} \dfrac{2\sqrt{m}-2}{m} (1+\Delta A_k)^2 + \frac{2}{\sqrt{m}} (1+\Delta A_k ) \left(- \dfrac{\gamma_{1k}^2}{2 \gamma_{2k}}-\dfrac{\gamma_{2k}}{2} h \right)  \\
& \stackrel{\text{a.s.}}{\ge} \dfrac{1}{\sqrt{m}} (1+\Delta A_k) \left( 2\bra{1 - \frac{1}{\sqrt m}}(1+\Delta A_k) - \left( \dfrac{\gamma_{1k}^2}{ \gamma_{2k}}+ \gamma_{2k} h \right)  \right)\\
& \stackrel{\text{a.s.}}{\ge} \dfrac{1}{\sqrt{m}} (1+\gamma_\ast) \left( 2\bra{1 - \frac{1}{\sqrt m}}(1+\gamma_\ast) - \left( \dfrac{\gamma_{1k}^2}{ \gamma_{2k}}+ \gamma_{2k} h \right)  \right)
> 0, 
\end{align*}
for any $h$ such that $0<h\le h_{\ddagger}$. Using this estimate, for any integer $k\ge 0$ we now observe that  
\begin{align*}
\mathcal N_k + \dfrac{m-1}{m} (1+\Delta A_k)^2 &=   \left[ 2 \left( 1 + \Delta A_k \right) \left( {\mathcal Y}_k -\dfrac{\gamma_{2k}}{2} h  \right) +  \left( {\mathcal Y}_k -\dfrac{\gamma_{2k}}{2} h  \right)^2 \right] + \dfrac{m-1}{m} (1+\Delta A_k)^2 \\ 
&= \left(   \frac{\sqrt{m}-1}{\sqrt{m}}  \left( 1 + \Delta A_k \right) +  {\mathcal Y}_k -\dfrac{\gamma_{2k}}{2} h    \right)^2 + \dfrac{2\sqrt{m}-2}{m} (1+\Delta A_k)^2 \\
& \quad + \frac{2}{\sqrt{m}}  \left( 1 + \Delta A_k \right) \left( {\mathcal Y}_k -\dfrac{\gamma_{2k}}{2} h  \right) \\
& \stackrel{\text{a.s.}}{>} \dfrac{2\sqrt{m}-2}{m} (1+\Delta A_k)^2 + \frac{2}{\sqrt{m}}  \left( 1 + \Delta A_k \right) \left( {\mathcal Y}_k -\dfrac{\gamma_{2k}}{2} h  \right) \\
& \stackrel{\text{a.s.}}{>} 0,
\end{align*}
which shows the desired estimate for $\mathcal N_k$.    
\end{proof}

We proceed similarly with Lemma $\ref{Lem:ComputeYkj}$, we get

\begin{lemma}
 \label{Lem.Non.Nk}  
 The following estimates hold
\begin{align}
   \mathbb{E} (\mathcal{N}_k^l)  
    \le   \left\{
    \begin{array}{llll}
      f^{*2} h + \dfrac{1}{2}\fo^2h^2   & \text{if} \quad l=1, \\
    4(1+ \Delta A_k )^2 \left( f^{*2} h + \dfrac{1}{2}\fo^2h^2  \right) + 4 (1+ \Delta A_k) \alpha_3 + O(h^2)  & \text{if} \quad l=2, \\
    8 (1+ \Delta A_k )^3 \alpha_3 + 12 (1+ \Delta A_k )^2 \alpha_4 
    + 6 (1+ \Delta A_k ) \alpha_5 + O(h^2)   & \text{if} \quad l= 3,
    \end{array}
    \right.
    \label{en1}
\end{align}
and
\begin{align}
   \mathbb{E} (\mathcal{N}_k^l)  
    \ge   \left\{
    \begin{array}{llll}
      f^2_{*} h + \dfrac{1}{2}\fu^2h^2   & \text{if} \quad l=1, \\
    4(1+ \Delta A_k )^2 \left( f^2_{*} h + \dfrac{1}{2}\fu^2h^2  \right) + 4 (1+ \Delta A_k) \gamma_3 + O(h^2)  & \text{if} \quad l=2, \\
    8 (1+ \Delta A_k )^3 \gamma_3 + 12 (1+ \Delta A_k )^2 \gamma_4 
    + 6 (1+ \Delta A_k ) \gamma_5 + O(h^2)   & \text{if} \quad l= 3,
    \end{array}
    \right.
     \label{en2}
\end{align}
where $$\alpha_m:= \sup_{k \ge 0}\mathbb{E} \left(\mathcal{Y}_k -\dfrac{\gamma_{2k}}{2} h \right)^m \quad \text{ and } \quad \gamma_m:= \inf_{k \ge 0} \mathbb{E} \left(\mathcal{Y}_k -\dfrac{\gamma_{2k}}{2} h \right)^m \quad \text{for} \quad  m=3,4,5.$$
\end{lemma}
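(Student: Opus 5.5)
The approach mirrors the proof of Lemma \ref{Lem:ComputeYkj}, with one extra step: the coefficients $\gamma_{1k},\gamma_{2k}$ are now random, and the final bounds come from $f_*\le\gamma_{1k}\le f^*$ and $M_1\le\gamma_{2k}\le M_2$. The key structural fact is that $\gamma_{1k},\gamma_{2k}$ depend only on $X_h(t_k)$, which is $\mathcal F_{t_k}$-measurable, while $\Delta W_k$ is independent of $\mathcal F_{t_k}$ with law $\mathcal N(0,h)$. We therefore first compute conditional expectations $\mathbb{E}[\,\cdot\,|\mathcal F_{t_k}]$ with $\gamma_{1k},\gamma_{2k}$ treated as constants, using Lemma \ref{Lem:BasicProp_0} exactly as in Lemma \ref{Lem:ComputeYkj}. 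Afterwards we take full expectations and apply the deterministic bounds on the coefficients.

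\emph{Case $l=1$.} Set $Z_k:=\mathcal Y_k-\frac{\gamma_{2k}}{2}h=\gamma_{1k}\Delta W_k+\frac{\gamma_{2k}}{2}((\Delta W_k)^2-h)$. Then $\mathbb{E}[Z_k|\mathcal F_{t_k}]=0$. Using $\mathbb{E}(\Delta W_k)^3=0$ and $\mathbb{E}((\Delta W_k)^2-h)^2=2h^2$,
\[
\mathbb{E}[Z_k^2|\mathcal F_{t_k}]=\gamma_{1k}^2h+\tfrac12\gamma_{2k}^2h^2 .
\]
Hence $\mathbb{E}[\mathcal N_k|\mathcal F_{t_k}]=\gamma_{1k}^2h+\frac12\gamma_{2k}^2h^2$. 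Bounding $\gamma_{1k}^2\in[f_*^2,f^{*2}]$ and $\gamma_{2k}^2$ by $M_2^2$ from above (respectively $M_1^2$ from below, interpreting $M_1^2$ as the minimum of $\gamma_{2k}^2$ when $M_1\ge0$) gives the $l=1$ lines of \eqref{en1}--\eqref{en2}.

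\emph{Cases $l=2,3$.} Expand binomially:
\[
\mathcal N_k^2=4(1+\Delta A_k)^2Z_k^2+4(1+\Delta A_k)Z_k^3+Z_k^4,
\]
\[
\mathcal N_k^3=8(1+\Delta A_k)^3Z_k^3+12(1+\Delta A_k)^2Z_k^4+6(1+\Delta A_k)Z_k^5+Z_k^6 .
\]
The coefficients $1+\Delta A_k$ are deterministic and positive by \eqref{lowerbound_a}. Taking expectations, the $Z_k^2$ term is handled as in the $l=1$ case. The terms $\mathbb{E}Z_k^m$ for $m=3,4,5$ are bounded above by $\alpha_m$ and below by $\gamma_m$ by definition, since each coefficient is positive. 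The leftover terms $\mathbb{E}Z_k^4$ (for $l=2$) and $\mathbb{E}Z_k^6$ (for $l=3$) are $O(h^2)$ uniformly in $k$. This follows because $|Z_k|\le f^*|\Delta W_k|+\frac12\max(|M_1|,|M_2|)\,|(\Delta W_k)^2-h|$, so $\mathbb{E}Z_k^{2j}\le Ch^{j}$ by Lemma \ref{Lem:BasicProp_0}. The same bound shows that $\alpha_m,\gamma_m$ are finite and $O(h^2)$ for $m=3,4,5$: the conditional third moment is $\mathbb{E}[Z_k^3|\mathcal F_{t_k}]=3\gamma_{1k}^2\gamma_{2k}h^2+\gamma_{2k}^3h^3$, while the fourth and fifth moments are $O(h^2)$ by the moment bound.

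The main obstacle is the measurability/independence step: $\gamma_{1k},\gamma_{2k}$ must be frozen under the conditional expectation, which is legitimate only because they are $\mathcal F_{t_k}$-measurable and bounded. A secondary point is sign care when bounding $\gamma_{2k}^2$ from below by $M_1^2$, which requires $M_1\ge0$. Under \eqref{F}, $f\partial_xf/x=(f/x)^2\cdot x\partial_xf/f\ge \frac12 f_*^2>0$, so $M_1$ may indeed be taken positive.
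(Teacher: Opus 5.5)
Your proposal is correct and follows the paper's proof. Both compute the Gaussian moments of $Z_k=\mathcal Y_k-\frac{\gamma_{2k}}{2}h$ via Lemma \ref{Lem:BasicProp_0}, expand $\mathcal N_k^l$ binomially, and bound $\gamma_{1k},\gamma_{2k}$ by $f_*,f^*,M_1,M_2$. Your explicit conditioning on $\mathcal F_{t_k}$ and your check that $\gamma_{2k}\ge \frac12 f_*^2>0$ make rigorous two points the paper passes over: it pulls the random $\gamma_{1k},\gamma_{2k}$ out of $\mathbb{E}$ as if they were deterministic, and it uses $M_1^2\le\gamma_{2k}^2\le M_2^2$ without addressing signs.
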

\begin{proof}
	Using Lemma \ref{Lem:BasicProp_0},  we compute directly   
	\begin{align*}
		\mathbb{E} ({\mathcal Y}_k^j) 
		&= \mathbb{E} \left[  \sum_{i=0}^j \begin{pmatrix}
			j \\
			i 
		\end{pmatrix} \big[ \gamma_{1k} \Delta W_k \big]^i \left( \dfrac{\gamma_{2k}}{2}(\Delta W_k)^2\right)^{j-i}   \right] = \mathbb{E} \left[  \sum_{i=0}^j \begin{pmatrix}
			j \\
			i 
		\end{pmatrix} \frac{ \gamma_{1k}^i \gamma_{2k}^{j-i}}{2^{j-i}} \Delta W_k^{2j-i}    \right]\\
		&= \mathbb{E} \left[  \sum_{i=0}^{\frac{1}{2}\jeven} \begin{pmatrix}
			j \\
			2i 
		\end{pmatrix}  \frac{\gamma_{1k}^{2i} \gamma_{2k}^{j-2i}}{2^{j-2i}} \Delta W_k^{2j-2i}    \right] =   \sum_{i=0}^{\frac{1}{2}\jeven} \begin{pmatrix}
			j \\
			2i 
		\end{pmatrix}  \frac{\gamma_{1k}^{2i} \gamma_{2k}^{j-2i}}{2^{j-2i}} [(2j-2i-1)!!] h^{j-i}    
	\end{align*}
	in which we note $j-i\ge j-\frac{1}{2}\jeven \ge 2$ for all $j\ge 3$.
	
	Moreover, we have
	\begin{align*}
		\mathbb{E} \left({\mathcal Y}_k -\dfrac{\gamma_{2k}}{2} h \right)^m & =  \mathbb{E} \left[  \sum_{j=0}^m \begin{pmatrix}
			m \\
			j 
		\end{pmatrix} {\mathcal Y}_k^j \left( -\dfrac{\gamma_{2k}}{2} h \right)^{m-j}   \right]\\
		& = \sum_{j=0}^m \begin{pmatrix}
			m \\
			j 
		\end{pmatrix}   \Bigg[ \sum_{i=0}^{\frac{1}{2}\jeven} \begin{pmatrix}
			j \\
			2i 
		\end{pmatrix} \dfrac{\gamma_{1k}^{2i} \gamma_{2k}^{j-2i}}{2^{j-2i}}  [(2j-2i-1)!!] h^{j-i}  \Bigg ] \left( -\dfrac{\gamma_{2k}}{2} h \right)^{m-j}\\ 
		& = \sum_{j=0}^m \begin{pmatrix}
			m \\
			j 
		\end{pmatrix}   \sum_{i=0}^{\frac{1}{2}\jeven} \begin{pmatrix}
			j \\
			2i 
		\end{pmatrix} (-1)^{m-j} \frac{\gamma_{1k}^{2i}  \gamma_{2k}^{m-2i}   }{2^{m-2i}} [(2j-2i-1)!!] h^{m-i}.
	\end{align*}
	Due to $f_* < \gamma_{1n} \leq f^*$ and $M_1 \le \gamma_{2n} \le M_2 $, we imply that $\mathbb{E} \left({\mathcal Y}_k -\dfrac{\gamma_{2k}}{2} h \right)^m $ is bounded for $m=3,4,5$. So, we can choose $\alpha_m$ and $\gamma_m$ such that 
    $$\alpha_m:= \sup_{k \ge 0}\mathbb{E} \left(\mathcal{Y}_k -\dfrac{\gamma_{2k}}{2} h \right)^m  = O(h^2) \quad \text{ and } \quad \gamma_m:= \inf_{k \ge 0} \mathbb{E} \left(\mathcal{Y}_k -\dfrac{\gamma_{2k}}{2} h \right)^m  = O(h^2) \quad \text{for} \quad  m=3,4,5.$$
	Thus,
	\begin{align*}
		\mathbb{E} \left({\mathcal Y}_k -\dfrac{\gamma_{2k}}{2} h \right)^m 
		=  \left\{
		\begin{array}{llll}
			0     & \text{if} \quad m=1, \\
			\gamma_{1k}^2 h  + \dfrac{1}{2} \gamma_{2k}^2 h^2  & \text{if} \quad m=2, \\
			O(h^2)   & \text{if} \quad m \ge 3.
		\end{array}
		\right.
	\end{align*}

	Thanks to the definition of $\mathcal N_k$, we get
	\begin{align}
		\label{k1}
		\mathbb{E} \left( \mathcal{N}_k \right) &=  \mathbb{E} \left( 2(1 + \Delta A_k)\left(\mathcal{Y}_k -\dfrac{\gamma_{2k} h}{2} \right) + \left( \mathcal{Y}_k -\dfrac{\gamma_{2k} h}{2} \right)^2 \right) \notag\\
		& = \gamma_{1k}^2 h  + \dfrac{1}{2} \gamma_{2k}^2 h^2 \leq f^{*2} h + \dfrac{1}{2}\fo^2h^2,
	\end{align}
	\begin{align}
		\label{k2}
		\mathbb{E} \left( \mathcal{N}_k \right)^2 &=  \mathbb{E} \left( 2(1 + \Delta A_k)\left(\mathcal{Y}_k -\dfrac{\gamma_{2k} h}{2} \right) + \left( \mathcal{Y}_k -\dfrac{\gamma_{2k} h}{2} \right)^2 \right)^2 \notag\\
		& = 4(1+\Delta A_k)^2 \left(\gamma_{1k}^2 h  + \dfrac{1}{2} \gamma_{2k}^2 h^2 \right) + 4(1+\Delta A_k)  \mathbb{E} \left({\mathcal Y}_k -\dfrac{\gamma_{2k}}{2} h \right)^3 + \mathbb{E} \left({\mathcal Y}_k -\dfrac{\gamma_{2k}}{2} h \right)^4 \notag\\
		& \leq 4(1+ \Delta A_k )^2 \left( f^{*2} h + \dfrac{1}{2}\fo^2h^2  \right) + 4 (1+ \Delta A_k) \alpha_3 + O(h^2)  ,
	\end{align}
	\begin{align}
		\label{k3}
		\mathbb{E} \left( \mathcal{N}_k \right)^3 &=  \mathbb{E} \left( 2(1 + \Delta A_k)\left(\mathcal{Y}_k -\dfrac{\gamma_{2k} h}{2} \right) + \left( \mathcal{Y}_k -\dfrac{\gamma_{2k} h}{2} \right)^2 \right)^3 \notag\\
		& = 8(1+\Delta A_k)^3 \mathbb{E} \left({\mathcal Y}_k -\dfrac{\gamma_{2k}}{2} h \right)^3 + 12(1+\Delta A_k)^2  \mathbb{E} \left({\mathcal Y}_k -\dfrac{\gamma_{2k}}{2} h \right)^4 \notag\\
		& \quad + 6 (1+\Delta A_k) \mathbb{E} \left({\mathcal Y}_k -\dfrac{\gamma_{2k}}{2} h \right)^5+\mathbb{E} \left({\mathcal Y}_k -\dfrac{\gamma_{2k}}{2} h \right)^6  \notag\\
		& \leq 8 (1+ \Delta A_k )^3 \alpha_3 + 12 (1+ \Delta A_k )^2 \alpha_4 
		+ 6 (1+ \Delta A_k ) \alpha_5 + O(h^2).
	\end{align}
	Combining $\eqref{k1},$ $\eqref{k2},$ $\eqref{k3},$ we prove $\eqref{en1}$. Similarly, we obtain $\eqref{en2}$. This finishes the proof.
\end{proof}

\subsection{Almost sure stability - Proof of Theorem \ref{Thm3}}
\begin{lemma}   
	\label{Lem.Non.xiNk}
	For every $k\in \mathbb{N}$, it holds that
	\begin{align*}
		\mathbb{E} \left[\xi_\rho \left( \mathcal{N}_k \right) \right] = O(h^{\frac{3}{2}})     \frac{1}{(1+ \Delta A_k )^3},  
	\end{align*}
	where $\xi_\rho$ is defined in \eqref{xirho}.
\end{lemma}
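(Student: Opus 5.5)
The plan mirrors the proof of Lemma \ref{Lem.E(xiNk)}, with $\alpha$ replaced by the random but bounded coefficients $\gamma_{1k},\gamma_{2k}$. First I would condition on $\mathcal F_{t_k}$. Since $\gamma_{1k},\gamma_{2k}$ are $\mathcal F_{t_k}$-measurable and $\Delta W_k$ is independent of $\mathcal F_{t_k}$, we can write $\mathcal N_k=\widetilde{\mathcal H}_k(\Delta W_k/\sqrt h)$. Here
\begin{align*}
\widetilde{\mathcal H}_k(y) = 2(1+\Delta A_k)\Big(\gamma_{1k}\sqrt h\,y+\tfrac{\gamma_{2k}h}{2}(y^2-1)\Big)+\Big(\gamma_{1k}\sqrt h\,y+\tfrac{\gamma_{2k}h}{2}(y^2-1)\Big)^2 .
\end{align*}
Lemma \ref{Lem.Non.h} gives $\widetilde{\mathcal H}_k(y) > -\frac{m-1}{m}(1+\Delta A_k)^2$ for all relevant $y$ when $h<h_\ddagger$. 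So $\Omega=\Omega_{k,1}\cup\Omega_{k,2}$, and $\xi_\rho(\mathcal N_k)$ is well defined and given by the two-piece formula \eqref{xirho}. The conditional expectation is then
\begin{align*}
\frac{m^3}{3(1+\Delta A_k)^6\sqrt{2\pi}}\int_{\{\widetilde{\mathcal H}_k<0\}}\widetilde{\mathcal H}_k(y)^3e^{-y^2/2}dy-\frac{1}{4(1+\Delta A_k)^8\sqrt{2\pi}}\int_{\{\widetilde{\mathcal H}_k\ge0\}}\widetilde{\mathcal H}_k(y)^4e^{-y^2/2}dy .
\end{align*}

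The next step bounds each integral by the full Gaussian integral of $|\widetilde{\mathcal H}_k|^3$ and $|\widetilde{\mathcal H}_k|^4$. Write $u=\gamma_{1k}\sqrt h\,y+\frac{\gamma_{2k}h}{2}(y^2-1)$. Then $|u|\le C\sqrt h(1+|y|+|y|^2)$, where $C$ depends only on $f^*$ and $\max(|M_1|,|M_2|)$, and we may take $h<1$. Using $\sup_k\Delta A_k<\infty$ from \eqref{lowerbound_a}, we get $|\widetilde{\mathcal H}_k|\le C'(1+\Delta A_k)\sqrt h\,(1+|y|)^4$. Gaussian moments of every order are finite by Lemma \ref{Lem:BasicProp_0}. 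Hence the cubic part is $O(h^{3/2})(1+\Delta A_k)^3/(1+\Delta A_k)^6=O(h^{3/2})(1+\Delta A_k)^{-3}$. The quartic part is $O(h^2)(1+\Delta A_k)^{-4}$, and since $1+\Delta A_k\ge 1+\gamma_*>1/2$ this is absorbed into $O(h^{3/2})(1+\Delta A_k)^{-3}$.

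All constants are uniform in $k$ and in $\omega$, because they depend only on $f_*,f^*,M_1,M_2,\gamma_*,\sup_k\Delta A_k$ and $m$. Taking the outer expectation of the conditional bound therefore gives the claim.

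The main obstacle is the randomness of $\gamma_{1k},\gamma_{2k}$, which depend on $X_h(t_k)$; unlike the linear case, $\mathcal H_k$ is not a deterministic function. I would handle this by the conditioning and independence argument above together with the uniform bounds from \eqref{F}. A second concern is that $M_1$ might be nonpositive. However, the lower bound on $\mathcal N_k$ already used $\gamma_{2k}>0$ implicitly in Lemma \ref{Lem.Non.h}, and the estimate above only needs $|\gamma_{2k}|$ bounded, so this causes no extra difficulty.
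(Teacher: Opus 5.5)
Your proposal is correct and follows the paper's route. Like the paper, you write $\mathcal N_k$ as a polynomial in $\Delta W_k/\sqrt h$, use Lemma~\ref{Lem.Non.h} to split the Gaussian integral over $\{\cdot\ge 0\}$ and $\{-\frac{m-1}{m}(1+\Delta A_k)^2<\cdot<0\}$, and read off the order $h^{3/2}(1+\Delta A_k)^{-3}$. Your version is in fact more careful than the paper's in three places:
\begin{itemize}
\item Conditioning on $\mathcal F_{t_k}$ handles the randomness of $\gamma_{1k},\gamma_{2k}$, which the paper silently treats as constants.
\item Your $\widetilde{\mathcal H}_k$ is the correct polynomial; the paper's $\mathcal G_k$ carries a spurious square on the term $-\gamma_{2k}h(\cdots)$.
\item Bounding by full absolute Gaussian moments supplies the order estimate that the paper merely asserts. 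One small correction there: absorbing $u^2$ uses $1+\Delta A_k>\frac12$, not $\sup_k\Delta A_k<\infty$.
\end{itemize}
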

\begin{proof}
	We note that
	\begin{align*}
		\mathcal{N}_k &= 2 \left( 1 + \Delta A_k \right) \left( {Y}_k -\dfrac{\gamma_{2k}}{2} h  \right) +  \left( {Y}_k -\dfrac{\gamma_{2k}}{2} h  \right)^2\notag\\
		& = 2 \left( 1 + \Delta A_k \right) \left( \gamma_{1k}\Delta W_k +\dfrac{\gamma_{2k}}{2}(\Delta W_k)^2 -\dfrac{\gamma_{2k}}{2} h  \right) +  \left( \gamma_{1k}\Delta W_k +\dfrac{\gamma_{2k}}{2}(\Delta W_k)^2 -\dfrac{\gamma_{2k}}{2} h  \right)^2 \notag\\
		& = 2 \left( 1 + \Delta A_k \right) \left( \gamma_{1k}\Delta W_k +\dfrac{\gamma_{2k}}{2}(\Delta W_k)^2 \right) - \gamma_{2k} h \left(\gamma_{1k}\Delta W_k +\dfrac{\gamma_{2k}}{2}(\Delta W_k)^2 \right) \notag\\
		&  \quad +  \left(\gamma_{1k}\Delta W_k +\dfrac{\gamma_{2k}}{2}(\Delta W_k)^2 \right)^2 +\left( \frac{\gamma_{2k}^2}{4}h - \gamma_{2k}  (1+  \Delta A_k ) \right) h  \notag\\
		& =  2 \left( 1 + \Delta A_k \right) \left( \gamma_{1k} \sqrt{h} \left(\dfrac{\Delta W_k}{\sqrt{h}} \right)+\dfrac{\gamma_{2k} h}{2}\left(\dfrac{\Delta W_k}{\sqrt{h}}\right)^2\right) - \gamma_{2k} h \left( \gamma_{1k} \sqrt{h} \left(\dfrac{\Delta W_k}{\sqrt{h}} \right)+\dfrac{\gamma_{2k} h}{2}\left(\dfrac{\Delta W_k}{\sqrt{h}}\right)^2 \right)^2 \notag\\
		&  \quad +  \left( \gamma_{1k} \sqrt{h} \left(\dfrac{\Delta W_k}{\sqrt{h}} \right)+\dfrac{\gamma_{2k} h}{2}\left(\dfrac{\Delta W_k}{\sqrt{h}}\right)^2\right)^2 +\left( \frac{\gamma_{2k}^2}{4}h - \gamma_{2k}  (1+  \Delta A_k ) \right) h.
	\end{align*}
	We can define the function
	\begin{align}
		\label{G_k}
		\mathcal G_k(y) &=  2 \left( 1 + \Delta A_k \right) \left( \gamma_{1k} \sqrt{h} y +\dfrac{\gamma_{2k} h}{2} y^2\right) - \gamma_{2k} h \left( \gamma_{1k} \sqrt{h} y +\dfrac{\gamma_{2k} h}{2} y^2 \right)^2 \notag\\
		&  \quad +  \left(\gamma_{1k} \sqrt{h} y +\dfrac{\gamma_{2k} h}{2} y^2\right)^2 +\left( \frac{\gamma_{2k}^2}{4}h - \gamma_{2k}  (1+ \Delta A_k ) \right) h.   
	\end{align}
	We can write briefly
	\begin{align*}
		\mathcal N_k(\omega) = \mathcal G_k \left(\dfrac{\Delta W_k(\omega)}{\sqrt{h}} \right), \quad \forall  \omega \in \Omega,  k \ge 0.
	\end{align*}
	Thanks to Lemma \ref{Lem.Non.h}, we see that $\mathcal G_k (\Delta W_k(\omega)/\sqrt{h})$ takes values at least $-((m-1)/m)(1+\Delta A_k)^2$ almost surely. Hence, the process $\xi_\rho(\mathcal N_k)$ can be rewritten as 
	\begin{align*}
		\xi_\rho(\mathcal N_k) = (\xi_\rho \circ \mathcal G_k ) \left(\dfrac{\Delta W_k}{\sqrt{h}} \right), \quad \forall k \ge 0.
	\end{align*}
	To estimate the expectation of the process $\mathcal N_k$, in the following we will use the probability density function of $\Delta W_k/\sqrt{h}$. Indeed, we have 
	\begin{align*}
		& \mathbb{E} \left[\xi_\rho \left( \mathcal{N}_k \right) \right]\\
         &  = \dfrac{m^3}{3(1+ \Delta A_k )^6 \sqrt{2\pi}} \int_{\{ - \frac{m-1}{m} (1+ \Delta A_k )^2 < \mathcal G_k(y) < 0 \}}  \Big( \mathcal{G}_k(y)  \Big)^3  e^{-\frac{y^2}{2}}dy \notag\\
		& \quad - \dfrac{1}{4(1+ \Delta A_k )^8 \sqrt{2\pi}} \int_{\{ \mathcal G_k(y) \ge 0 \}} \Big( \mathcal{G}_k(y)  \Big)^4   e^{-\frac{y^2}{2}}dy  \notag\\ 
		& = \dfrac{m^3}{3(1+ \Delta A_k )^6 \sqrt{2\pi}} \int_{\{ - \frac{m-1}{m} (1+ \Delta A_k )^2 < \mathcal G_k(y) < 0 \}} \Bigg[ 2 \left( 1 + \Delta A_k \right) \left( \gamma_{1k}  \sqrt{h} y +\dfrac{\gamma_{2k} h}{2} y^2\right)\notag\\
		& \quad - \gamma_{2k} h \left( \gamma_{1k} \sqrt{h} y +\dfrac{\gamma_{2k} h}{2} y^2 \right)^2   +  \left(\gamma_{1k} \sqrt{h} y +\dfrac{\gamma_{2k} h}{2} y^2\right)^2 +\left( \frac{\gamma_{2k}^2}{4}h - \gamma_{2k}  (1+  \Delta A_k ) \right) h \Bigg]^3  e^{-\frac{y^2}{2}}dy \notag\\
		&\quad - \dfrac{1}{4(1+ \Delta A_k )^8 \sqrt{2\pi}} \int_{\{ \mathcal G_k(y) \ge 0 \}}  \Bigg[ 2 \left( 1 + \Delta A_k \right) \left( \gamma_{1k} \sqrt{h} y +\dfrac{\gamma_{2k} h}{2} y^2\right)\notag\\
		&\quad  - \gamma_{2k} h \left( \gamma_{1k} \sqrt{h} y +\dfrac{\gamma_{2k} h}{2} y^2 \right)^2  +  \left(\gamma_{1k} \sqrt{h} y +\dfrac{\gamma_{2k} h}{2} y^2\right)^2 +\left( \frac{\gamma_{2k}^2}{4}h - \gamma_{2k}  (1+  \Delta A_k ) \right) h \Bigg]^4  e^{-\frac{y^2}{2}}dy  \notag\\
		& = O(h^{\frac{3}{2}})     \frac{1}{(1+ \Delta A_k )^3}.
	\end{align*}
	This finishes the proof.
\end{proof}

We now give a proof of Theorem \ref{Thm3}.
\begin{proof}[Proof of Theorem $\ref{Thm3}$]\hfill
	
\noindent \textit{\underline{Upper estimate}}:  Using inequality \eqref{Inq:Up}, we get
\begin{align}
\log \left( (1+ \Delta A_k )^2 + \mathcal{N}_k \right) \le \log (1+ \Delta A_k )^2 + \frac{\mathcal{N}_k}{(1+ \Delta A_k )^2}  -  \frac{\mathcal{N}_k^2}{2(1+ \Delta A_k )^4}   + \frac{\mathcal{N}_k^3}{3(1+ \Delta A_k )^6} . 
\label{b0}
\end{align}
Using Lemma \ref{Lem:Kolmo}, \eqref{en1}, and \eqref{en2}, we have
 \begin{align}
      \limsup_{n\to \infty} \frac{1}{2t_n}  \sum_{k=0}^{n-1} \frac{\mathcal{N}_k}{(1+ \Delta A_k )^2}
      &\stackrel{\text{a.s.}}{\leq} \limsup_{n\to \infty} \left( \frac{f^{*2}}{2n}  \sum_{k=0}^{n-1} \frac{1}{(1+ \Delta A_k )^2}   + \frac{\fo^2h}{4n}  \sum_{k=0}^{n-1} \frac{1}{(1+ \Delta A_k )^2} \right)\notag \\
      & \le \frac{f^{*2}}{2}\limsup_{n\to \infty} \left( \frac{1}{n}  \sum_{k=0}^{n-1} \frac{1}{(1+ \Delta A_k )^2}\right)   + h\limsup_{n\to \infty}\left(\frac{\fo^2}{4n}  \sum_{k=0}^{n-1} \frac{1}{(1+ \Delta A_k )^2} \right)\notag \\
      & \le \frac{f^{*2}}{2}\limsup_{n\to \infty} \left( \frac{1}{n}  \sum_{k=0}^{n-1} \frac{1}{(1+ \Delta A_k )^2}\right)   + O(h).
      \label{b1}
\end{align}
For the higher order term w.t.r $\mathcal N_k$, we use $\gamma_3 = O(h^2)$ and $\alpha_m = O(h^2)$ for $m\in \{3,4,5\}$ to estimate
\begin{align}
        \liminf_{n\to \infty}  \frac{1}{4t_n}  \sum_{k=0}^{n-1} \frac{\mathcal{N}_k^2}{(1+ \Delta A_k )^4} &\stackrel{\text{a.s.}}{\geq}  \liminf_{n\to \infty}  \frac{1}{t_n}  \sum_{k=0}^{n-1} \frac{(1+ \Delta A_k )^2 ( f_{*}^2 h +  \frac{1}{2}\fu^2 h^2  ) + (1+ \Delta A_k) \gamma_3 + O(h^2) }{(1+ \Delta A_k )^4} \notag\\  
      & \stackrel{\text{a.s.}}{\geq} f_*^2\liminf_{n\to \infty} \left( \frac{1}{n}  \sum_{k=0}^{n-1} \frac{1}{(1+ \Delta A_k )^2}\right) + O(h), 
      \label{b2}
\end{align}
and
\begin{align}
\label{b3}
& \limsup_{n\to \infty} \frac{1}{6t_n}  \sum_{k=0}^{n-1} \frac{\mathcal{N}_k^3}{(1+ \Delta A_k )^6} \notag\\
    & \stackrel{\text{a.s.}}{\le} \limsup_{n\to \infty}  \frac{1}{6t_n}  \sum_{k=0}^{n-1} \frac{  8 (1+ \Delta A_k )^3 \alpha_3 + 12 (1+ \Delta A_k )^2 \alpha_4 
     + 6 (1+ \Delta A_k ) \alpha_5 + O(h^2) }{(1+ \Delta A_k )^6} 
   = O(h).
\end{align}
Analyzing the limit superior, and combining $\eqref{b0}$, $\eqref{b1}, \eqref{b2}, \eqref{b3}$, we obtain the desired upper bound
\begin{align*}
    \limsup_{n\to \infty} \frac{1}{t_n} \log |X_h(t_n)| 
    & \stackrel{\text{\normalfont a.s.}}{\le} \limsup_{n\to \infty} \left(  \frac{1}{n}\sum_{k=0}^{n-1} \dfrac{\log |1+ \Delta A_k |}{h}\right)  + \frac{f^{*2}}{2}\limsup_{n\to\infty}\frac{1}{n}\sum_{k=0}^{n-1}\frac{1}{(1+\Delta A_k)^2}\\
  &  \quad  - f_*^2\liminf_{n\to\infty}\frac{1}{n}\sum_{k=0}^{n-1}\frac{1}{(1+\Delta A_k)^2} + O(h).   
\end{align*}
\noindent \textit{\underline{Lower estimate}}: 
We note that \begin{align*}
\mathcal{Y}_k = \gamma_{1k}\Delta W_k + \dfrac{\gamma_{2k}}{2} (\Delta W_k)^2  = \left(\sqrt{\dfrac{\gamma_{2k}}{2}} \Delta W_k + \dfrac{ \gamma_{1k}}{\sqrt{2} \sqrt{\gamma_{2k}}} \right)^2 - \dfrac{ \gamma_{1k}^2}{2 \gamma_{2k}} \ge - \dfrac{\gamma_{1k}^2}{2 \gamma_{2k}}. 
\end{align*}
We can choose $h$ sufficiently small such that $1+\Delta A_k$ is strictly positive. By Lemma $\ref{Lem.Non.h}$, we can choose 
$$0< h < \dfrac{2(1-1/ \sqrt{m}) (1+\gamma_{\ast})-(\frac{3}{2}+\gamma_*)}{M_2}  $$ 
and the inequality $ \mathcal{N}_k > -\dfrac{m-1}{m} (1+\Delta A_k)^2$ is sastisfied almost surely. 
Applying inequality in Remark \ref{remark:Taylor}, we have
\begin{align*}
\log \left( (1+ \Delta A_k )^2 + \mathcal{N}_k \right) \ge \log (1+ \Delta A_k )^2 + \frac{\mathcal{N}_k}{(1+ \Delta A_k )^2}  -  \frac{\mathcal{N}_k^2}{2(1+ \Delta A_k )^4} + \xi_\rho(\mathcal{N}_k).
\end{align*}
Using Lemma $\ref{Lem:Kolmo}$, $\eqref{en1},$ and $\eqref{en2}$, we have
 \begin{align}
      \liminf_{n\to \infty} \frac{1}{2t_n}  \sum_{k=0}^{n-1} \frac{\mathcal{N}_k}{(1+ \Delta A_k )^2}
      &\stackrel{\text{a.s.}}{\ge} \liminf_{n\to \infty} \left( \frac{f_{*}^2}{2n}  \sum_{k=0}^{n-1} \frac{1}{(1+ \Delta A_k )^2}   + \frac{\fu^2h}{4n}  \sum_{k=0}^{n-1} \frac{1}{(1+ \Delta A_k )^2} \right) \notag \\
      & =\frac{f_*^2}{2}\liminf_{n\to\infty}\frac{1}{n}\sum_{k=0}^{n-1}\frac{1}{(1+\Delta A_k)^2} + O(h).
      \label{b11}
\end{align}
For the term concerning $\mathcal N_k^2$, we use $\alpha_3 = O(h^2)$ to estimate
\begin{align}
        \limsup_{n\to \infty} \frac{1}{4t_n}  \sum_{k=0}^{n-1} \frac{\mathcal{N}_k^2}{(1+ \Delta A_k )^4} &\stackrel{\text{a.s.}}{\leq}  \limsup_{n\to \infty}  \frac{1}{t_n}  \sum_{k=0}^{n-1} \frac{(1+ \Delta A_k )^2 ( f^{*2} h +  \frac{1}{2}\fo^2 h^2  ) + (1+ \Delta A_k) \alpha_3 + O(h^2) }{(1+ \Delta A_k )^4} \notag\\  
      & \stackrel{\text{a.s.}}{=}   f^{*2}\limsup_{n\to \infty}  \frac{1}{n}  \sum_{k=0}^{n-1} \frac{1}{(1+ \Delta A_k )^2} + O(h). 
      \label{b22}
\end{align}
Using Lemma $\ref{Lem:Kolmo}$ and Lemma $\ref{Lem.Non.xiNk} $, we get
\begin{align}
\label{b33}
 \liminf_{n\to \infty} \frac{1}{2t_n}  \sum_{k=0}^{n-1} \xi_p(\mathcal{N}_k) 
    & \stackrel{\text{a.s.}}{\ge} \liminf_{n\to \infty}  \frac{1}{2t_n}  \sum_{k=0}^{n-1} O(h^{\frac{3}{2}}) \dfrac{ 1 }{(1+ \Delta A_k )^3}   = O(h^{\frac{1}{2}}). 
\end{align}
By combining $\eqref{b11}, \eqref{b22}, \eqref{b33}$, we obtain
\begin{align*}
    \liminf_{n\to \infty} \frac{1}{t_n} \log |X_h(t_n)| 
   &\ge  \liminf_{n\to \infty}  \frac{1}{2t_n} \sum_{k=0}^{n-1}\left[ \log (1+ \Delta A_k )^2 + \frac{ \mathcal{N}_k}{(1+ \Delta A_k )^2}  -  \frac{\mathcal{N}_k^2}{2(1+ \Delta A_k )^4} + \xi_\rho(\mathcal{N}_k) \right] \notag \\
    & \stackrel{\text{a.s.}}{\ge}  \liminf_{n\to \infty}   \frac{1}{n}\sum_{k=0}^{n-1} \dfrac{\log |1+ \Delta A_k |}{h}  + \frac{f^2_{*}}{2}\liminf_{n\to\infty}\dfrac{1}{n}  \sum_{k=0}^{n-1} \dfrac{1}{(1+ \Delta A_k )^2} \notag \\
    & \quad   - f^{*2}\limsup_{n\to\infty}\dfrac{1}{n}  \sum_{k=0}^{n-1} \dfrac{1}{(1+ \Delta A_k )^2} + O(h^{\frac{1}{2}}). 
\end{align*}
This finishes the desired proof.
\end{proof}

\subsection{$p$-moment stability - Proof of Theorem \ref{Thm4}}
\begin{lemma}
    \label{Lem.Non.PsiNk}
    For every $k\in \mathbb{N}$, it holds that
    \begin{align*}
\mathbb{E} \left[\Psi_\rho \left(\mathcal{N}_k \right) \right]  =  O(h^{3/2}) \dfrac{1}{(1+\Delta A_k)^{3-p}},  
    \end{align*}
    where $\Psi_\rho$ is defined in Lemma $\ref{Lem.E(PsiNk)}$.
\end{lemma}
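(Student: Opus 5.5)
We follow the proof of Lemma \ref{Lem.E(PsiNk)}, with the function $\mathcal H_k$ replaced by $\mathcal G_k$ from \eqref{G_k}. First, by Lemma \ref{Lem.Non.h}, for $0<h<h_\ddagger$ we have $\mathcal N_k > -\frac{m-1}{m}(1+\Delta A_k)^2$ almost surely. Hence the sets $\{\mathcal N_k\ge 0\}$ and $\{-\frac{m-1}{m}(1+\Delta A_k)^2<\mathcal N_k<0\}$ exhaust $\Omega$, and $\Psi_\rho(\mathcal N_k)$ is well defined almost surely. We write $\mathcal N_k=\mathcal G_k(\Delta W_k/\sqrt h)$. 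Conditionally on $X_h(t_k)$, the coefficients $\gamma_{1k},\gamma_{2k}$ are fixed numbers, and $\Delta W_k/\sqrt h$ is standard normal and independent of $\mathcal F_{t_k}$. So we compute the conditional expectation by integrating against the Gaussian density and then take the outer expectation. This gives
\begin{align*}
\mathbb E\big[\Psi_\rho(\mathcal N_k)\,|\,\mathcal F_{t_k}\big]
&= \begin{pmatrix} p/2\\ 3\end{pmatrix}\frac{m^{3-p/2}}{(1+\Delta A_k)^{6-p}\sqrt{2\pi}}\int_{\{-\frac{m-1}{m}(1+\Delta A_k)^2<\mathcal G_k<0\}}\mathcal G_k(y)^3e^{-y^2/2}dy\\
&\quad+\begin{pmatrix} p/2\\ 4\end{pmatrix}\frac{1}{(1+\Delta A_k)^{8-p}\sqrt{2\pi}}\int_{\{\mathcal G_k\ge0\}}\mathcal G_k(y)^4e^{-y^2/2}dy .
\end{align*}

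Next we bound each integral in absolute value by the corresponding full Gaussian moment, $\int|\mathcal G_k|^3e^{-y^2/2}$ or $\int\mathcal G_k^4e^{-y^2/2}$. We expand $\mathcal G_k(y)$ as a polynomial in $\sqrt h\,y$. The leading term is $2(1+\Delta A_k)\gamma_{1k}\sqrt h\,y$, and all other terms carry at least a factor $h$ times a polynomial in $y$. Their coefficients depend only on $1+\Delta A_k$, $\gamma_{1k}$ and $\gamma_{2k}$. By \eqref{F} these satisfy $f_*\le\gamma_{1k}\le f^*$ and $M_1\le\gamma_{2k}\le M_2$ uniformly in $k$ and $\omega$, and by \eqref{lowerbound_a} we have $1/2<1+\Delta A_k\le C$. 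It follows that $|\mathcal G_k(y)|\le C(1+\Delta A_k)\sqrt h\,(1+|y|)^4$ for $h<1$, with $C$ depending only on $f^*,M_2,\sup_k\Delta A_k$. Here we use that $1+\Delta A_k$ is bounded below to absorb the terms that do not carry the factor $(1+\Delta A_k)$.

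Integrating this bound against the Gaussian density gives the third-moment piece as $O(h^{3/2})(1+\Delta A_k)^3/(1+\Delta A_k)^{6-p}=O(h^{3/2})(1+\Delta A_k)^{p-3}$. The fourth-moment piece is $O(h^2)(1+\Delta A_k)^{4}/(1+\Delta A_k)^{8-p}$, which is also of the form $O(h^{3/2})(1+\Delta A_k)^{p-3}$ because $1+\Delta A_k$ is bounded above and below. The bound is deterministic and uniform in $\omega$, so taking the outer expectation preserves it. This yields the claim.

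The main obstacle is making the $O(h^{3/2})$ uniform despite the random, state-dependent coefficients $\gamma_{1k},\gamma_{2k}$. In the linear case these were constants. Here we must argue through conditioning on $\mathcal F_{t_k}$ and use the two-sided bounds from \eqref{F}. These make every constant in the polynomial estimate of $\mathcal G_k$ independent of $k$ and of $X_h(t_k)$.
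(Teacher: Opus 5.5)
Your proposal is correct and follows the same route as the paper: write $\mathcal N_k=\mathcal G_k(\Delta W_k/\sqrt h)$, use Lemma \ref{Lem.Non.h} to split the Gaussian integral over the two regions defining $\Psi_\rho$, and bound each piece by Gaussian moments of the polynomial $\mathcal G_k$. Your conditioning on $\mathcal F_{t_k}$ (needed because $\gamma_{1k},\gamma_{2k}$ depend on $X_h(t_k)$) and your explicit bound $|\mathcal G_k(y)|\le C(1+\Delta A_k)\sqrt h\,(1+|y|)^4$ make rigorous what the paper leaves implicit: the paper integrates as if $\gamma_{1k},\gamma_{2k}$ were constants and asserts the $O(h^{3/2})$ order without giving this intermediate estimate.
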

\begin{proof}
	Using the definition of $\mathcal G_k(y)$ in equation $\eqref{G_k}$ in Lemma $\ref{Lem.Non.xiNk}$, we can rewrite 
	\begin{align*}
		\Psi_\rho(\mathcal N_k) = (\Psi_\rho \circ \mathcal G_k ) \left(\dfrac{\Delta W_k}{\sqrt{h}} \right), \quad \forall k \ge 0.
	\end{align*}
	The expectation of $\Psi_\rho(\mathcal{N}_k)$ is given by 
	\begin{align*}
	&	\mathbb{E} \left[\Psi_\rho\left(\mathcal{N}_k \right) \right] \notag\\
		& =  \dfrac{p(p-2)(p-4)}{48} \times \dfrac{m^{3-p/2}}{(1+ \Delta A_k )^{6-p} \sqrt{2\pi}} \int_{\{ - \frac{m-1}{m} (1+ \Delta A_k )^2 < \mathcal G_k(y) < 0 \}} \Big( \mathcal{G}_k(y)\Big)^3  e^{-\frac{y^2}{2}}dy \notag\\
		& \quad +\dfrac{p(p-2)(p-4) (p-6)}{384} \times  \dfrac{1}{(1+ \Delta A_k )^{8-p} \sqrt{2\pi}} \int_{\{ \mathcal G_k(y) \ge 0 \}} \Big( \mathcal{G}_k(y)\Big)^4 e^{-\frac{y^2}{2}}dy\notag\\
		& = \dfrac{p(p-2)(p-4)}{48} \times \dfrac{m^{3-p/2}}{(1+ \Delta A_k )^{6-p} \sqrt{2\pi}} \notag\\
		& \quad \times \int_{\{ - \frac{m-1}{m} (1+ \Delta A_k )^2 < \mathcal G_k(y) < 0 \}}  \Bigg[ 2 \left( 1 + \Delta A_k \right) \left( \gamma_{1k} \sqrt{h} y +\dfrac{\gamma_{2k}h}{2} y^2\right)\notag\\
		& \quad    - \gamma_{2k} h \left( \gamma_{1k} \sqrt{h} y +\dfrac{\gamma_{2k} h}{2} y^2 \right)^2  +  \left(\gamma_{1k} \sqrt{h} y +\dfrac{\gamma_{2k} h}{2} y^2\right)^2   +\left( \frac{\gamma_{2k}^2}{4}h - \gamma_{2k}  (1+  \Delta A_k ) \right) h \Bigg]^3  e^{-\frac{y^2}{2}}dy \notag\\
		& \quad +\dfrac{p(p-2)(p-4)(p-6)}{384} \times  \dfrac{1}{(1+ \Delta A_k )^{8-p} \sqrt{2\pi}} \notag\\
		& \quad \times \int_{\{ \mathcal G_k(y) \ge 0 \}} \Bigg[ 2 \left( 1 + \Delta A_k \right) \left( \gamma_{1k} \sqrt{h} y +\dfrac{\gamma_{2k}h}{2} y^2\right)\notag\\
		& \quad   - \gamma_{2k} h \left( \gamma_{1k} \sqrt{h} y +\dfrac{\gamma_{2k}h}{2} y^2 \right)^2  +  \left(\gamma_{1k} \sqrt{h} y +\dfrac{\gamma_{2k}h}{2} y^2\right)^2  +\left( \frac{\gamma_{2k}^2}{4}h - \gamma_{2k}  (1+  \Delta A_k ) \right) h \Bigg]^4  e^{-\frac{y^2}{2}}dy \notag\\
		& = O(h^{3/2}) \dfrac{1}{(1+\Delta A_k)^{3-p}}.
	\end{align*} 
	This finishes the proof.
\end{proof}

We are now in a position to prove Theorem \ref{Thm4}.
\begin{proof}
It follows from \eqref{Nonl} that
\begin{align*}
    \mathbb{E} |X_h(t_n)|^p 
     = |x_0|^p \prod_{k=0}^{n-1} \mathbb{E} \, \Big|  1 + \Delta A_k -\dfrac{\gamma_{2k}}{2} h +  \mathcal{Y}_k   \Big|^p = |x_0|^p \prod_{k=0}^{n-1} \mathbb{E} \, \Big [  (1 + \Delta A_k)^2 + \mathcal{N}_k   \Big]^{p/2}.
    \end{align*}
\noindent \textit{\underline{Upper estimate}}:  Using inequality \eqref{Inq:Up}, we get
\begin{align}
&\Big [(1 + \Delta A_k)^2 + \mathcal{N}_k   \Big]^{p/2} \notag\\
&\le (1+ \Delta A_k)^p +\dfrac{p}{2}\dfrac{\mathcal{N}_k}{(1+ \Delta A_k)^{2-p}}+\begin{pmatrix}
    p/2 \\
    2
    \end{pmatrix}\dfrac{\mathcal{N}_k^2}{(1+ \Delta A_k)^{4-p}}  +\begin{pmatrix}
    p/2 \\
    3
    \end{pmatrix}\dfrac{\mathcal{N}_k^3}{(1+ \Delta A_k)^{6-p}}.
    \label{p1}
    \end{align}
Taking the expectation of two sides of $\eqref{p1}$, and combining $\eqref{en1}, \eqref{en2},$ we have
\begin{align*}
&\mathbb{E} \Big [(1 + \Delta A_k)^2 + {\mathcal N}_k   \Big]^{p/2}\\
&\le  (1+ \Delta A_k)^p + \dfrac{p}{2}\dfrac{1}{(1+ \Delta A_k)^{2-p}} \left( f^{*2}h+\dfrac{1}{2} \fo^2 h^2 \right)\\
&\quad - \dfrac{1}{(1+ \Delta A_k)^{4-p}} \dfrac{p(2-p)}{8} \left(4(1+ \Delta A_k )^2 \left( f^2_{*} h + \dfrac{1}{2} \fu^2 h^2  \right) + 4 (1+ \Delta A_k) \alpha_3 + O(h^2) \right)\\
&\quad + \dfrac{1}{(1+ \Delta A_k)^{6-p}} \dfrac{p(2-p)(4-p)}{48} \left(8 (1+ \Delta A_k )^3 \alpha_3 + 12 (1+ \Delta A_k )^2 \alpha_4 
    + 6 (1+ \Delta A_k ) \alpha_5 + O(h^2)\right)\\
& = (1+ \Delta A_k)^p + \dfrac{p}{2} \dfrac{1}{(1+ \Delta A_k)^{2-p}} \left(f^{*2}-(2-p)f_*^2 \right) h + O(h^2).
\end{align*}
Thus, $$\mathbb{E} |X_h(t_n)|^p \le |x_0|^p \prod_{k=0}^{n-1} \left((1+ \Delta A_k)^p + \dfrac{p}{2} \dfrac{1}{(1+ \Delta A_k)^{2-p}} \left(f^{*2} - (2-p)f_*^2 \right) h  + O(h^2)\right). $$
Taking logarithms on both sides and then taking the limit superior, we obtain
\begin{align*}
   & \limsup_{n\to \infty}\dfrac{1}{t_n}\log \mathbb{E} |X_h(t_n)|^p \notag \\
   & \le  \limsup_{n\to \infty}\dfrac{1}{t_n} \left ( \log |x_0|^p + \sum_{k=0}^{n-1} \log \left((1+ \Delta A_k)^p + \dfrac{p}{2} \dfrac{1}{(1+ \Delta A_k)^{2-p}} \left(f^{*2}-(2-p)f_*^2 \right) h  + O(h^2) \right) \right) \notag\\
   & =  \limsup_{n\to \infty}\dfrac{1}{t_n} \left ( \sum_{k=0}^{n-1} \log \left((1+ \Delta A_k)^p + \dfrac{p}{2} \dfrac{1}{(1+ \Delta A_k)^{2-p}} \left(f^{*2}-(2-p)f_*^2 \right) h  + O(h^2) \right) \right)\notag\\
   & = \limsup_{n\to \infty}\dfrac{1}{t_n} \left ( \sum_{k=0}^{n-1} \log \left((1+ \Delta A_k)^p + \dfrac{p}{2} \dfrac{1}{(1+ \Delta A_k)^{2-p}} (f^{*2} - (3-2p)f^2_{*}) h + O(h^2) \right. \right. \notag\\
   & \quad \quad \quad \quad \quad \quad \quad \quad \quad \quad\quad \left. \left. + \dfrac{p(1-p)}{2} \dfrac{1}{(1+ \Delta A_k)^{2-p}} f^2_{*} h\right) \right).
   \end{align*} 
Applying inequality \eqref{Inq:Up} with $$\rho= (1+ \Delta A_k)^p + \dfrac{p}{2} \dfrac{1}{(1+ \Delta A_k)^{2-p}} (f^{*2} - (3-2p)f^2_{*}) h + O(h^2)$$ and $$x= \dfrac{p(1-p)}{2} \dfrac{1}{(1+ \Delta A_k)^{2-p}} f^2_{*} h,$$ we get
\begin{align*}
 &\log \left((1+ \Delta A_k)^p + \dfrac{p}{2} \dfrac{1}{(1+ \Delta A_k)^{2-p}} (f^{*2} - (3-2p)f^2_{*}) h + O(h^2) + \dfrac{p(p-2)}{2} \dfrac{1}{(1+ \Delta A_k)^{2-p}} f^2_{*} h\right) \notag\\
 &\leq \log \left( (1+ \Delta A_k)^p + \dfrac{p}{2} \dfrac{1}{(1+ \Delta A_k)^{2-p}} (f^{*2} - (3-2p)f^2_{*}) h + O(h^2) \right) \notag\\
 & \quad + \dfrac{p(1-p)  f^2_{*} h}{2(1+ \Delta A_k)^2 + p  (f^{*2} - (3-2p)f^2_{*}) h +2(1+ \Delta A_k)^{2-p}  O(h^2)}\notag\\
 & \quad - \dfrac{p^2(1-p)^2 f^4_{*} h^2}{2\left(2(1+ \Delta A_k)^2 + p  (f^{*2} - (3-2p)f^2_{*}) h +2(1+ \Delta A_k)^{2-p}  O(h^2)\right)^2}\notag\\
 & \quad + \dfrac{p^3(1-p)^3 f^6_{*} h^3}{3\left(2(1+ \Delta A_k)^2 + p  (f^{*2} - (3-2p)f^2_{*}) h +2(1+ \Delta A_k)^{2-p}  O(h^2)\right)^3}.
\end{align*}
Thus, 
\begin{align*}
 & \limsup_{n\to \infty}\dfrac{1}{t_n}\log \mathbb{E} |X_h(t_n)|^p  \\
 & \le \limsup_{n\to \infty}\dfrac{1}{t_n} \Bigg[ \sum_{k=0}^{n-1} \log  \left( (1+ \Delta A_k)^p + \dfrac{p}{2} \dfrac{1}{(1+ \Delta A_k)^{2-p}} (f^{*2} - (3-2p)f^2_{*}) h + O(h^2) \right)  \notag\\
  & \qquad \qquad \quad + \sum_{k=0}^{n-1} \dfrac{p(1-p) f^2_{*} h}{2(1+ \Delta A_k)^2 + p  (f^{*2} - (3-2p)f^2_{*}) h +2(1+ \Delta A_k)^{2-p}  O(h^2)}\notag\\
  & \qquad \qquad \quad  - \sum_{k=0}^{n-1} \dfrac{p^2(1-p)^2 f^4_{*} h^2}{2\left(2(1+ \Delta A_k)^2 + p  (f^{*2} - (3-2p)f^2_{*}) h +2(1+ \Delta A_k)^{2-p}  O(h^2)\right)^2}\notag\\
  & \qquad \qquad \quad +  \sum_{k=0}^{n-1} \dfrac{p^3(1-p)^3 f^6_{*} h^3}{3\left(2(1+ \Delta A_k)^2 + p  (f^{*2} - (3-2p)f^2_{*}) h +2(1+ \Delta A_k)^{2-p}  O(h^2)\right)^3} \Bigg] \notag\\
   & \le \limsup_{n\to \infty}\dfrac{1}{n} \Bigg[ \sum_{k=0}^{n-1} \dfrac{1}{h}\log \left( (1+ \Delta A_k)^p + \dfrac{p}{2} \dfrac{1}{(1+ \Delta A_k)^{2-p}} (f^{*2} - (3-2p)f^2_{*}) h + O(h^2) \right)  \notag\\
  & \qquad \qquad \quad +  \sum_{k=0}^{n-1} \dfrac{p(1-p) f^2_{*} }{2(1+ \Delta A_k)^2 + p  (f^{*2} - (3-2p)f^2_{*}) h +2(1+ \Delta A_k)^{2-p}  O(h^2)} \Bigg] \notag\\
  & \qquad \qquad \quad - Ch\liminf_{n\to \infty}\dfrac{1}{n} \sum_{k=0}^{n-1}  \dfrac{1}{2\left(2(1+ \Delta A_k)^2 + p  (f^{*2} - (3-2p)f^2_{*}) h +2(1+ \Delta A_k)^{2-p}  O(h^2)\right)^2}\notag\\
  & \qquad \qquad \quad  +Ch^2 \limsup_{n\to \infty}\dfrac{1}{n} \sum_{k=0}^{n-1} \dfrac{1}{3\left(2(1+ \Delta A_k)^2 + p  (f^{*2} - (3-2p)f^2_{*}) h +2(1+ \Delta A_k)^{2-p}  O(h^2)\right)^3}. 
\end{align*}
Therefore, we have
\begin{align*}
    &\limsup_{n\to \infty}\dfrac{1}{t_n}\log \mathbb{E} |X_h(t_n)|^p\\
    &\le \limsup_{n\to \infty}\dfrac{1}{n} \Bigg[ \sum_{k=0}^{n-1}  \dfrac{1}{h} \log \left( (1+ \Delta A_k)^p + \dfrac{p}{2} \dfrac{1}{(1+ \Delta A_k)^{2-p}} (f^{*2} - (3-2p)f^2_{*}) h + O(h^2) \right) \notag \\
    &\qquad \qquad +  \sum_{k=0}^{n-1} \dfrac{p(1-p) f^2_{*} }{2(1+ \Delta A_k)^2 + p  (f^{*2} - (3-2p)f^2_{*}) h +2(1+ \Delta A_k)^{2-p}  O(h^2)}  \Bigg ] +O(h).
\end{align*}

\medskip
\noindent \textit{\underline{Lower estimate}}: 
Applying inequality in Remark \ref{remark:Taylor}, we have    
\begin{align}
\label{plower2}
  \Big [(1 + \Delta A_k)^2 + \mathcal{N}_k   \Big]^{p/2} \ge (1+ \Delta A_k)^p +\dfrac{p}{2}\dfrac{\mathcal{N}_k}{(1+ \Delta A_k)^{2-p}}+\begin{pmatrix}
    p/2 \\
    2
    \end{pmatrix}\dfrac{\mathcal{N}_k^2}{(1+ \Delta A_k)^{4-p}}  + \Psi_\rho(\mathcal{N}_k).
\end{align}
Taking the expectation of two sides of \eqref{plower2}, and combining $\eqref{en1}, \eqref{en2},$ we have
\begin{align}
&\mathbb{E} \Big [(1 + \Delta A_k)^2 + \mathcal{N}_k   \Big]^{p/2} \notag\\
&\ge  (1+ \Delta A_k)^p + \dfrac{p}{2}\dfrac{1}{(1+ \Delta A_k)^{2-p}} \left( f_*^2h+\dfrac{1}{2}\fu^2 h^2 \right)\notag\\
&\quad - \dfrac{1}{(1+ \Delta A_k)^{4-p}} \dfrac{p(2-p)}{8} \left(4(1+ \Delta A_k )^2 \left( f^{*2} h + \dfrac{1}{2}\fo^2 h^2  \right) + 4 (1+ \Delta A_k) \gamma_3 + O(h^2) \right) \notag\\
& \quad + O(h^{\frac{3}{2}}) \dfrac{1}{(1+\Delta A_k)^{3-p}} \notag\\ 
& = (1+ \Delta A_k)^p + \dfrac{p}{2} \dfrac{1}{(1+ \Delta A_k)^{2-p}} (f_*^2 - (3-2p)f^{*2}) h +  \dfrac{p(1-p)}{2} \dfrac{1}{(1+ \Delta A_k)^{2-p}}f^{*2}  h \notag \\
& \quad + O(h^{\frac{3}{2}})  \dfrac{1}{(1+\Delta A_k)^{3-p}}.
\label{p2}
\end{align}
Taking the logarithm of two sides of $\eqref{p2}$, we get
\begin{align}
 \log \mathbb{E} |X_h(t_n)|^p  & = \log \left[ |x_0|^p \prod_{k=0}^{n-1} \mathbb{E} \, \Big [  (1 + \Delta A_k)^2 + \mathcal{N}_k   \Big]^{p/2} \right] \notag\\
   & \ge  \log |x_0|^p + \sum_{k=0}^{n-1} \log \left((1+ \Delta A_k)^p + \dfrac{p}{2} \dfrac{1}{(1+ \Delta A_k)^{2-p}} (f_*^2 - (3-2p)f^{*2}) h \right. \notag\\
   & \quad + \left. \dfrac{p(1-p)}{2} \dfrac{1}{(1+ \Delta A_k)^{2-p}}f^{*2} h + O(h^{\frac{3}{2}})  \dfrac{1}{(1+\Delta A_k)^{3-p}} \right). 
   \label{upE2}
\end{align}
Applying inequality in Lemma \ref{Lem:TayTech} with $$\rho= (1+ \Delta A_k)^p + \dfrac{p}{2} \dfrac{1}{(1+ \Delta A_k)^{2-p}} (f_*^2 - (3-2p)f^{*2}) h + O(h^{\frac{3}{2}})  \dfrac{1}{(1+\Delta A_k)^{3-p}} $$ and $$x= \dfrac{p(1-p)}{2} \dfrac{1}{(1+ \Delta A_k)^{2-p}}f^{*2} h \ge 0,$$ we get
\begin{align}
    & \log \left((1+ \Delta A_k)^p + \dfrac{p}{2} \dfrac{1}{(1+ \Delta A_k)^{2-p}} (f_*^2 - (3-2p)f^{*2}) h + O(h^{\frac{3}{2}})  \dfrac{1}{(1+\Delta A_k)^{3-p}} \right. \notag\\
    & \quad \left. +  \dfrac{p(1-p)}{2} \dfrac{1}{(1+ \Delta A_k)^{2-p}}f^{*2} h  \right) \notag\\
   & \ge  \log \left( (1+ \Delta A_k)^p + \dfrac{p}{2} \dfrac{1}{(1+ \Delta A_k)^{2-p}} (f_*^2 - (3-2p)f^{*2}) h + O(h^{\frac{3}{2}}) \dfrac{1}{(1+\Delta A_k)^{3-p}} \right) \notag \\
   & \quad + \dfrac{p(1-p) (1+\Delta A_k) f^{*2} h}{2(1+ \Delta A_k)^3 + p (1+ \Delta A_k)  (f_*^2 - (3-2p)f^{*2}) h + O(h^{\frac{3}{2}})}\notag\\ 
   & \quad - \dfrac{p^2(1-p)^2  (1+ \Delta A_k)^2 f^{*4} h^2}{2\left(2(1+ \Delta A_k)^3 + p (1+ \Delta A_k) (f_*^2 - (3-2p)f^{*2}) h + O(h^{\frac{3}{2}})\right)^2}\notag\\ 
   &\quad  + \dfrac{p^3(1-p)^3 (1+ \Delta A_k)^3 f^{*6} h^3}{3\left(2(1+ \Delta A_k)^3 + p (1+ \Delta A_k) (f_*^2 - (3-2p)f^{*2}) h +  O(h^{\frac{3}{2}})\right)^3}\notag\\ 
    &\quad  - \dfrac{p^4(1-p)^4 (1+ \Delta A_k)^4 f^{*8} h^4}{4\left(2(1+ \Delta A_k)^3 + p (1+ \Delta A_k) (f_*^2 - (3-2p)f^{*2}) h +   O(h^{\frac{3}{2}})\right)^4}.
   \label{uplog2}
\end{align}
Thanks to \eqref{upE2} and \eqref{uplog2}, we have 
\begin{align*}
&\log \mathbb{E} |X_h(t_n)|^p \notag\\ &  \ge  \log |x_0|^p + \sum_{k=0}^{n-1} \Bigg[  \log \left( (1+ \Delta A_k)^p + \dfrac{p}{2} \dfrac{1}{(1+ \Delta A_k)^{2-p}} (f_*^2 - (3-2p)f^{*2}) h + O(h^{\frac{3}{2}}) \dfrac{1}{(1+ \Delta A_k)^{3-p}} \right)\notag \\
  & \qquad\qquad\qquad\quad\quad + \dfrac{p(1-p) (1+ \Delta A_k) f^{*2} h}{2(1+ \Delta A_k)^3 + p (1+ \Delta A_k) (f_*^2 - (3-2p)f^{*2}) h +  O(h^{\frac{3}{2}})}\notag\\ 
   & \qquad\qquad\qquad\quad\quad - \dfrac{p^2(1-p)^2 (1+ \Delta A_k)^2 f^{*4} h^2}{2\left(2(1+ \Delta A_k)^3 + p (1+ \Delta A_k) (f_*^2 - (3-2p)f^{*2}) h +  O(h^{\frac{3}{2}})\right)^2} \notag\\ 
   &\qquad\qquad\qquad\quad\quad  + \dfrac{p^3(1-p)^3 (1+ \Delta A_k)^3 f^{*6} h^3}{3\left(2(1+ \Delta A_k)^3 + p (1+ \Delta A_k) (f_*^2 - (3-2p)f^{*2}) h +  O(h^{\frac{3}{2}})\right)^3}\notag\\ 
    &\qquad\qquad\qquad\quad\quad  - \dfrac{p^4(1-p)^4 (1+ \Delta A_k)^4 f^{*6} h^3}{4\left(2(1+ \Delta A_k)^3 + p (1+ \Delta A_k) (f_*^2 - (3-2p)f^{*2}) h +  O(h^{\frac{3}{2}})\right)^4} \Bigg].
\end{align*}
Thus,
\begin{align*}
  &  \liminf_{n\to \infty}\dfrac{1}{t_n}\log \mathbb{E} |X_h(t_n)|^p \notag \\
   & \ge  \liminf_{n\to \infty}\dfrac{1}{t_n}  \sum_{k=0}^{n-1} \Bigg[  \log \left( (1+ \Delta A_k)^p + \dfrac{p}{2} \dfrac{1}{(1+ \Delta A_k)^{2-p}} (f_*^2 - (3-2p)f^{*2}) h + O(h^{\frac{3}{2}}) \dfrac{1}{(1+ \Delta A_k)^{3-p}}\right)\notag \\
   & \quad\quad\quad\quad\quad\quad\quad   + \dfrac{p(1-p) (1+ \Delta A_k) f^{*2} h}{2(1+ \Delta A_k)^3 + p (1+ \Delta A_k) (f_*^2 - (3-2p)f^{*2}) h +  O(h^{\frac{3}{2}})}\notag\\ 
   & \quad\quad\quad\quad\quad\quad\quad - \dfrac{p^2(1-p)^2 (1+ \Delta A_k)^2 f^{*4} h^2}{2\left(2(1+ \Delta A_k)^3 + p (1+ \Delta A_k) (f_*^2 - (3-2p)f^{*2}) h +   O(h^{\frac{3}{2}})\right)^2}\notag\\ 
   &\quad\quad\quad\quad\quad\quad\quad  + \dfrac{p^3(1-p)^3 (1+ \Delta A_k)^3 f^{*6} h^3}{3\left(2(1+ \Delta A_k)^3 + p (1+ \Delta A_k) (f_*^2 - (3-2p)f^{*2}) h +  O(h^{\frac{3}{2}})\right)^3}\notag\\ 
    &\quad\quad\quad\quad\quad\quad\quad  - \dfrac{p^4(1-p)^4 (1+ \Delta A_k)^4 f^{*6} h^4}{4\left(3(1+ \Delta A_k)^3 + p (1+ \Delta A_k) (f_*^2 - (3-2p)f^{*2}) h +  O(h^{\frac{3}{2}})\right)^4} \Bigg] 
    \notag \\
    & \ge  \liminf_{n\to \infty} \dfrac{1}{n}\Bigg[  \sum_{k=0}^{n-1} \frac 1h\log \left( (1+ \Delta A_k)^p + \dfrac{p}{2} \dfrac{1}{(1+ \Delta A_k)^{2-p}} (f_*^2 - (3-2p)f^{*2}) h + O(h^{\frac{3}{2}}) \dfrac{1}{(1+ \Delta A_k)^{3-p}}\right)\notag \\
   & \quad\quad\quad\quad\quad\quad\quad    + \sum_{k=0}^{n-1}\dfrac{p(1-p)(1+ \Delta A_k)f^{*2}}{2(1+ \Delta A_k)^3 + p (1+ \Delta A_k) (f_*^2 - (3-2p)f^{*2}) h +  O(h^{\frac{3}{2}})}\Bigg]\notag\\ 
   & \quad - Ch\limsup_{n\to\infty}\frac{1}{n}\sum_{k=0}^{n-1}\dfrac{(1+ \Delta A_k)^2}{2\left(2(1+ \Delta A_k)^3 + p (1+ \Delta A_k) (f_*^2 - (3-2p)f^{*2}) h +   O(h^{\frac{3}{2}})\right)^2} \notag\\ 
   &\quad   + Ch^2\liminf_{n\to\infty}\frac{1}{n}\sum_{k=0}^{n-1}\dfrac{(1+ \Delta A_k)^3}{3\left(2(1+ \Delta A_k)^3 + p (1+ \Delta A_k) (f_*^2 - (3-2p)f^{*2}) h +  O(h^{\frac{3}{2}})\right)^3}\notag\\ 
    &\quad   -Ch^3\limsup_{n\to\infty}\frac{1}{n}\sum_{k=0}^{n-1} \dfrac{(1+ \Delta A_k)^4}{4\left(3(1+ \Delta A_k)^3 + p (1+ \Delta A_k) (f_*^2 - (3-2p)f^{*2}) h +  O(h^{\frac{3}{2}})\right)^4} \\
    & =  \liminf_{n\to \infty} \frac{1}{n}\sum_{k=0}^{n-1}\dfrac{1}{n} \Bigg[ \sum_{k=0}^{n-1}  \dfrac{1}{h} \log \left( (1+ \Delta A_k)^p + \dfrac{p}{2} \dfrac{1}{(1+ \Delta A_k)^{2-p}} (f_*^2 - (3-2p)f^{*2}) h + O(h^{\frac{3}{2}}) \dfrac{1}{(1+\Delta A_k)^{3-p}} \right)  \notag\\
   & \quad \quad  \quad \quad  \quad \quad  +  \sum_{k=0}^{n-1} \dfrac{p(1-p) (1+\Delta A_k) f^{*2} }{2(1+ \Delta A_k)^3 + p (1+\Delta A_k) (f_*^2 - (3-2p)f^{*2}) h +  O(h^{\frac{3}{2}})}  \Bigg] +O(h).
\end{align*}

This finishes the desired proof.
\end{proof}

\section{Numerical simulations}\label{sec:num}
In this section, we present numerical simulations illustrating the results obtained in the previous sections. 

\medskip
We start with the linear non-autonomous equation
\begin{equation*}
    dX(t) = a(t)X(t)dt + \alpha X(t) dW(t)
\end{equation*}
where the function $a(t)$ is bounded and the Lyapunov exponent $\limsup_{t\to\infty}\frac 1t\int_0^ta(s)ds$ is positive. More precisely, we take three examples 
\[
    a(t) \in \left\{\frac{4t}{t + 4}, \frac{5t}{t + 5}, \frac{6t}{t+6}\right\}
\]
where the Lyapunov exponents are $4, 5$ and $6$ respectively. That means that the corresponding deterministic equations are (exponentially) unstable. We solve the SDE for different values of $\alpha\in \{2, 4\}$ with the final time horizon $T = 3$ and the step size $h = 10^{-3}$. We calculate 50 samples and plot the mean of $\log|X_h(t)|$ as the bold line, while the variance arising from different samples is represented by the region around it with the same color but lighter. When $\alpha = 2$, which is small, we see in the Figure \ref{fig:1a} that the solution is still unstable since the stochastic Lyapunov exponent \eqref{Lyapunov_exp} is still positive. In Figure \ref{fig:1b} we see that with enough noise intensities, the solution decays exponentially to zero, which is represented by $\log|X_h(t)|$ which behaves like linear functions with negative slopes.


\begin{figure}[H]
\begin{center}
\begin{subfigure}{.45\linewidth}
\centering
\includegraphics[width=8.4cm, height=6.5cm]{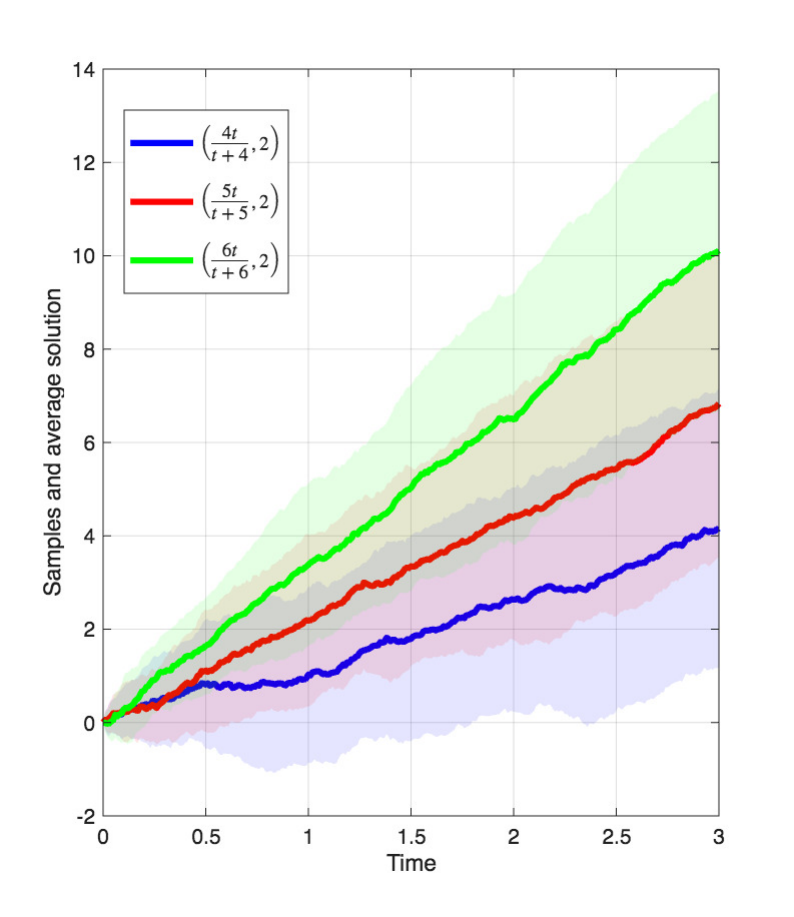}

\caption{Small noise intensities are insufficient to stabilise the unstable deterministic problem.}
\label{fig:1a}
\end{subfigure}%
\hspace*{0.2in}
\begin{subfigure}{.45\linewidth}
\centering
\includegraphics[width=8.4cm, height=6.5cm]{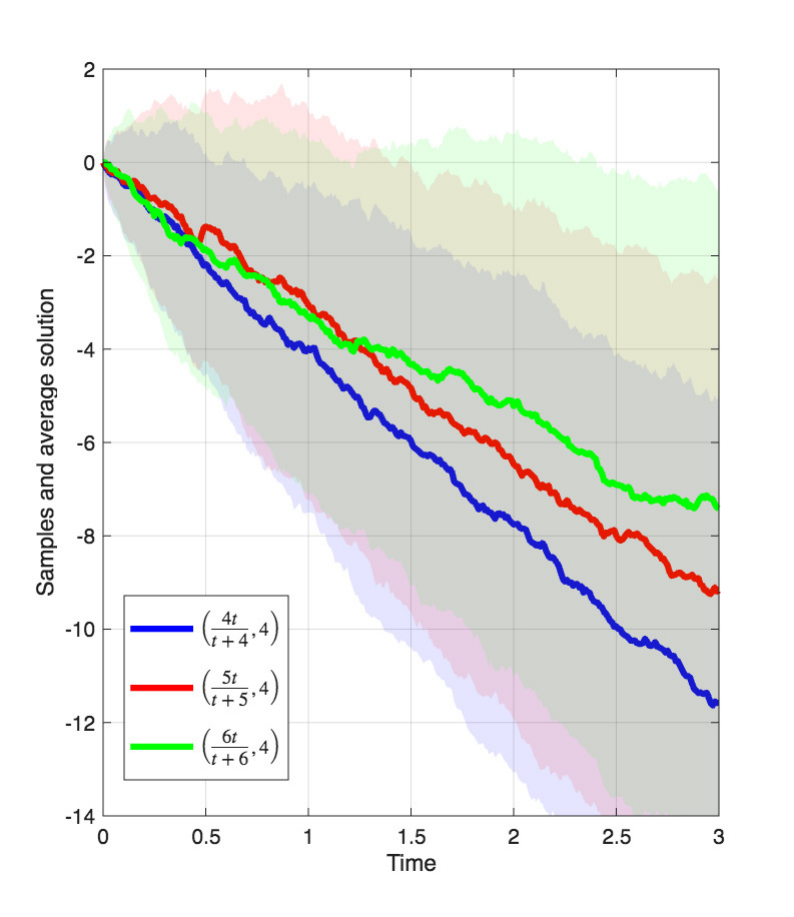}
\caption{Sufficiently large noise intensities stabilise the unstable deterministic problem.}
\label{fig:1b}
\end{subfigure}%
\caption{Effect of different noise intensities on unstable deterministic linear non-autonomous problems.}
\label{fig:1}
\end{center}
\end{figure}

In the second example, we consider the case where the non-autonomous coefficient $a(t)$ is unbounded and still satisfies the finiteness of the Lyapunov exponent \eqref{finiteness_a}. More precisely, 
\begin{equation}\label{discontinuousa}
    a(t)= \begin{cases}
n+1, & t\in \left[n,n+\frac{1}{n}\right],\\
0, & t\in \left(n+\frac{1}{n},\,n+1\right],
\end{cases}
\end{equation}
which gives the Lyapunov exponent of the deterministic equation
\begin{equation*}
    \limsup_{t\to\infty}\int_0^ta(s)ds = 1.
\end{equation*}
This means that the deterministic equation is exponentially stable. It follows that for any $|\alpha| >  \sqrt{2}$, the SDE $dX(t) = a(t)X(t)dt + \alpha X(t)dW(t)$ is almost surely exponentially stable. This can be seen in Figure \ref{fig:2}, with the time horizon $T = 15$ and time step size $h = 10^{-3}$, where the averages of logarithmic of solutions with different $\alpha$ are presented. When $\alpha=0$, i.e. the deterministic equation, the solution is exponentially unstable (black line). For $\alpha = 1 < \sqrt{2}$, the solution is still unstable, though the growth rate is lower than the deterministic one. For $\alpha = 2$ and $\alpha = 3$, the logarithmic of solution behaves like a linear function with 
negative slope, meaning that the solution is exponentially stable, and the large $|\alpha|$ is the faster the decay to zero holds.
\begin{figure}[H]
    \centering
    \includegraphics[width=0.5\linewidth]{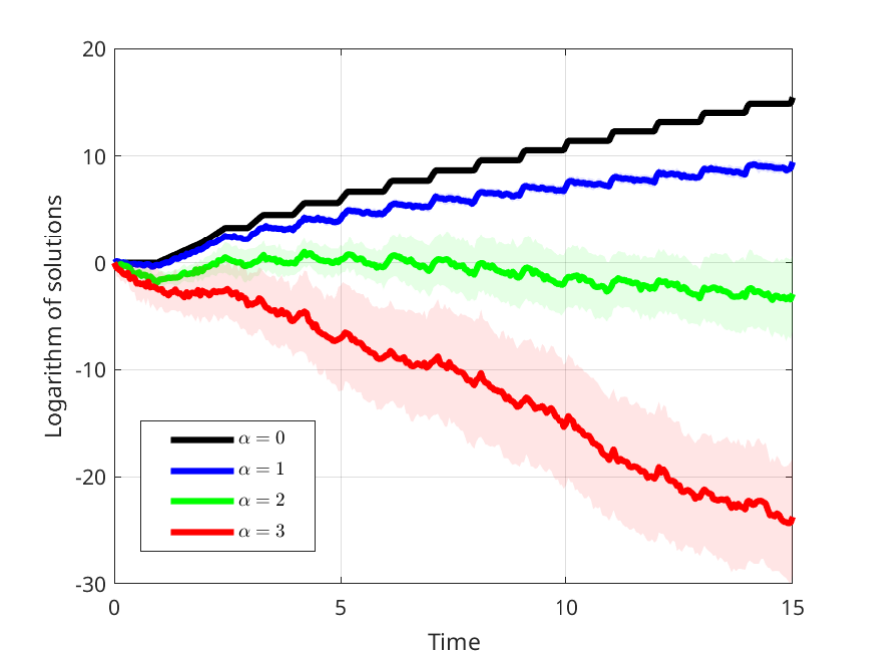}
    \caption{Stabilisation by noise for the Milstein scheme of the SDE with the discontinuous and unbounded coefficient $a(t)$ in \eqref{discontinuousa}.}
    \label{fig:2}
\end{figure}


Finally, we consider the case of nonlinear equation \eqref{SDE:Continuous0_non} with
\begin{equation*}
    a(t) = 3 + \sin(t)^2 \quad\text{ and } \quad f(t,X) = \alpha\times\frac{(2+t^2)(1+x^2)x}{(1+t^2)(2+x^2)}.
\end{equation*}
It is clear that for $\alpha = 0$, the deterministic equation is unstable, which is presented by the black line in Figure \ref{fig:3}.  When $\alpha$ increases, the stabilising effect of the nonlinear term also increases which is represented for $\alpha = 3, 6$ and $8$, respectively, in Figure \ref{fig:3}.
\begin{figure}[H]
    \centering
    \includegraphics[width=0.5\linewidth]{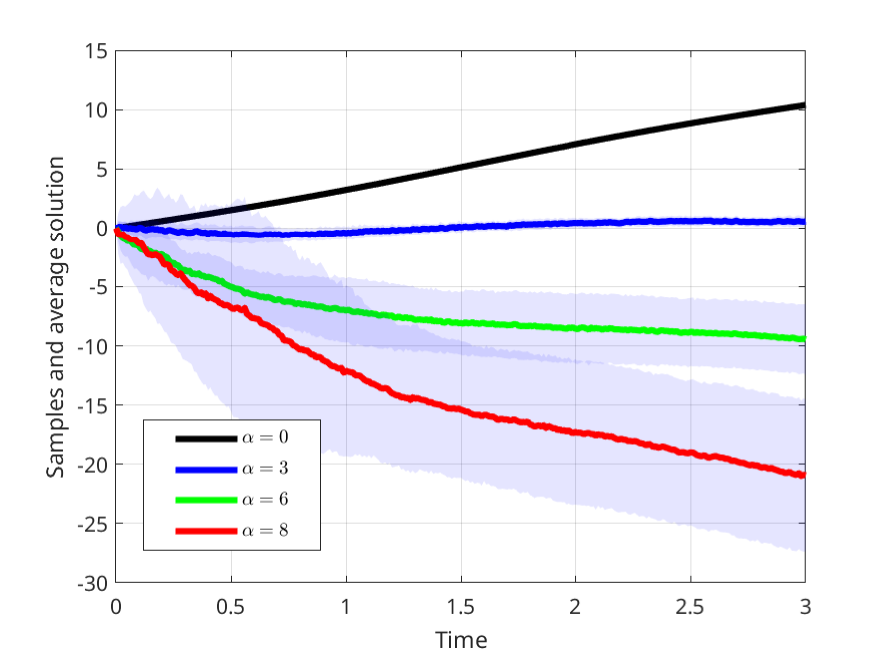}
    \caption{The stabilising effect of the noise for the nonlinear non-autonomous SDE.}
    \label{fig:3}
\end{figure}

\section{Appendix}
\addtocontents{toc}{\setcounter{tocdepth}{1}}

\subsection{Proof of Lemma \ref{Lem:TayTech}.} \label{Apd.Lem:TayTech}
\begin{proof}
We consider 
\begin{align*}
    h(x) &= f_{\eta,r}(x) - f_{\eta,r}^{\mathsf{low}}(x) \notag\\
    & = \left\{ \begin{array}{llrll}
    (\eta+x)^r - \eta^r -  \dfrac{1}{\eta^{1-r}} r x - 
    \dfrac{1}{\eta^{2-r}} \begin{pmatrix}
    r \\
    2
    \end{pmatrix}  x^2 - \dfrac{1}{\eta^{3-r}} \begin{pmatrix}
    r \\
    3 
    \end{pmatrix} 
     x^3 - \dfrac{1}{\eta^{4-r}} \begin{pmatrix}
    r \\
    4 
    \end{pmatrix} 
     x^4 \qquad  \text{if } x \ge 0,
    \vspace{0.15cm}\\
    (\eta+x)^r - \eta^r -  \dfrac{1}{\eta^{1-r}} r x - 
    \dfrac{1}{\eta^{2-r}} \begin{pmatrix}
    r \\
    2
    \end{pmatrix}  x^2 - \left( \dfrac{m} {\eta}\right)^{3-r} \begin{pmatrix}
    r \\
    3 
    \end{pmatrix} 
     x^3 \quad  \text{if } - \dfrac{m-1}{m}\eta < x < 0. 
    \end{array} \right.
\end{align*}
The case $x=0$ is trivial. 

We compute the derivatives of $h(x)$ and analyze their signs.
\begin{align*}
    h'(x)= \left\{ \begin{array}{llrll}
    r(\eta+x)^{r-1} -   \dfrac{1}{\eta^{1-r}} r  - 
    \dfrac{r(r-1)}{\eta^{2-r}} x - \dfrac{r(r-1)(r-2)}{2\eta^{3-r}}  
     x^2 - \dfrac{r(r-1)(r-2)(r-3)}{6\eta^{4-r}} x^3 \qquad  \text{if } x > 0,
    \vspace{0.15cm}\\
    r(\eta+x)^{r-1} -  \dfrac{1}{\eta^{1-r}} r  - 
    \dfrac{r(r-1)}{\eta^{2-r}}  x - \left( \dfrac{m} {\eta}\right)^{3-r} \dfrac{r(r-1)(r-2)}{2} x^2
    \quad  \text{if } - \dfrac{m-1}{m}\eta < x < 0,
    \end{array} \right.
\end{align*}
and
\begin{align*}
    h''(x)= \left\{ \begin{array}{llrll}
    r(r-1)(\eta+x)^{r-2} -
    \dfrac{r(r-1)}{\eta^{2-r}}  - \dfrac{r(r-1)(r-2)}{\eta^{3-r}}  
     x - \dfrac{r(r-1)(r-2)(r-3)}{2\eta^{4-r}} x^2 \qquad  \text{if } x > 0,
    \vspace{0.15cm}\\
    r(r-1)(\eta+x)^{r-2}  - 
    \dfrac{r(r-1)}{\eta^{2-r}} - \left( \dfrac{m} {\eta}\right)^{3-r} r(r-1)(r-2) x
    \quad  \text{if } - \dfrac{m-1}{m}\eta < x < 0,
    \end{array} \right.
\end{align*}
and
\begin{align*}
    h'''(x)= \left\{ \begin{array}{llrll}
    r(r-1)(r-2)(\eta+x)^{r-3}   - \dfrac{r(r-1)(r-2)}{\eta^{3-r}}  
      - \dfrac{r(r-1)(r-2)(r-3)}{\eta^{4-r}} x \qquad  \text{if } x > 0,
    \vspace{0.15cm}\\
    r(r-1)(r-2)(\eta+x)^{r-3}   - \left( \dfrac{m} {\eta}\right)^{3-r} r(r-1)(r-2) 
    \quad  \text{if } - \dfrac{m-1}{m}\eta < x < 0.
    \end{array} \right.
\end{align*}
For all $x>0,$ we have $(\eta+x)^{4-r} > \eta^{4-r}.$ Thus,
\begin{align}
    h^{(4)}(x) &= r(r-1)(r-2) (r-3)(\eta+x)^{r-4} - \dfrac{r(r-1)(r-2)(r-3)}{\eta^{4-r}} \notag\\
    & = r(r-1)(r-2) (r-3) \left[\dfrac{1}{(\eta+x)^{4-r}} - \dfrac{1}{\eta^{4-r} } \right] \notag\\
    & = -r(1-r)(2-r)(3-r) \left[\dfrac{1}{(\eta+x)^{4-r}} - \dfrac{1}{\eta^{4-r} } \right] >0.
    \end{align}
  This implies that the third order derivative $h'''(x)$ is an increasing function. Hence, 
  \begin{align*}
    h'''(x) > h'''(0) = r(1-r)(2-r)\dfrac{1}{\eta^{3-r}}- \dfrac{r(1-r)(2-r)}{\eta^{3-r}} =0.
  \end{align*}
  This implies $h''(x)>h''(0)=0$, and, thus $h'(x)>h'(0)= 0$, and $h(x)>h(0)=0$. Thus, $f_{\eta,r}(x) \geq  f_{\eta,r}^{\mathsf{low}}(x)$ for all $x\geq 0$.

For all $- \dfrac{m-1}{m}\eta < x < 0,$ we have
\begin{align*}
  h'''(x) &=  r(r-1)(r-2)(\eta+x)^{r-3}   - \left( \dfrac{m} {\eta}\right)^{3-r} r(r-1)(r-2) \notag\\
  & = r(1-r)(2-r)\dfrac{1}{(\eta+x)^{3-r}} - \left( \dfrac{m} {\eta}\right)^{3-r} r(1-r)(2-r)  \notag\\ 
  & < r(1-r)(2-r)\left( \dfrac{m} {\eta}\right)^{3-r} - \left( \dfrac{m} {\eta}\right)^{3-r}r(1-r)(2-r)=0.
\end{align*}
Thus, $h''(x)>h''(0)=0$, and, thus, $h'(x)<h'(0)= 0,$ and, thus, $h(x)>0$. Thus, $f_{\eta,r}(x) > f_{\eta,r}^{\mathsf{low}}(x)$ for all $- \dfrac{m-1}{m}\eta < x < 0.$ 

  We continue by considering the function
  \begin{align*}
     u(x)& = g_{\rho}(x)- g_{\rho}^{\mathsf{low}}(x) \\
      & = \left\{ \begin{array}{llrll}
     \log(\rho+x) - \log \rho - \dfrac{x}{\rho} + \dfrac{x^2}{2\rho^2} -  \dfrac{x^3}{3\rho^3} +   \dfrac{x^4}{4\rho^4} & \text{if}& x \ge 0,
    \vspace{0.15cm}\\
    \log(\rho+x) - \log \rho - \dfrac{x}{\rho} + \dfrac{x^2}{2\rho^2} -   m^3 \dfrac{x^3}{3\rho^3}   & \text{if}& - \dfrac{m-1}{m}\rho < x < 0.  
    \end{array} \right.
  \end{align*}
  The case $x=0$ is trivial. 
  
We compute the derivatives of $u(x)$ and analyse their signs.
 \begin{align*}
     u'(x)= \left\{ \begin{array}{llrll}
     \dfrac{1}{\rho +x} - \dfrac{1}{\rho} + \dfrac{x}{\rho^2} -  \dfrac{x^2}{\rho^3} +   \dfrac{x^3}{\rho^4} & \text{if}& x  > 0,
    \vspace{0.15cm}\\
    \dfrac{1}{\rho +x} - \dfrac{1}{\rho} + \dfrac{x}{\rho^2} -   m^3 \dfrac{x^2}{\rho^3}   & \text{if}& - \dfrac{m-1}{m}\rho < x < 0,  
    \end{array} \right.
  \end{align*}
and
\begin{align*}
     u''(x)= \left\{ \begin{array}{llrll}
     \dfrac{-1}{(\rho +x)^2}  + \dfrac{1}{\rho^2} -  \dfrac{2x}{\rho^3} +   \dfrac{3x^2}{\rho^4} & \text{if}& x > 0,
    \vspace{0.15cm}\\
    \dfrac{-1}{(\rho +x)^2}   + \dfrac{1}{\rho^2} -   m^3 \dfrac{2x}{\rho^3}   & \text{if}& - \dfrac{m-1}{m}\rho < x < 0,  
    \end{array} \right.
  \end{align*}
and
\begin{align*}
     u'''(x)= \left\{ \begin{array}{llrll}
     \dfrac{2}{(\rho +x)^3}   -  \dfrac{2}{\rho^3} +   \dfrac{6x}{\rho^4} & \text{if}& x > 0,
    \vspace{0.15cm}\\
    \dfrac{2}{(\rho +x)^3}    -   m^3 \dfrac{2}{\rho^3}   & \text{if}& - \dfrac{m-1}{m}\rho < x < 0,  
    \end{array} \right.
  \end{align*}
For all $x>0,$ we have $u^{(4)}(x) = \dfrac{-6}{(\rho +x)^4} +\dfrac{6}{\rho^4}>0$, and, thus $u'''(x) > u'''(0)= 0 $, thus, $u''(x)> u''(0)=0$, thus, $u'(x)> u'(0) = 0$, thus, $u(x)> u(0)=0$. Thus, $g_{\rho}(x)\geq  g_{\rho}^{\mathsf{low}}(x)$ for all $x \geq 0$.

For all $- \dfrac{m-1}{m}\rho < x < 0,$ we get 
\begin{align*}
    u'''(x) =  \dfrac{2}{(\rho +x)^3}    -   m^3 \dfrac{2}{\rho^3}  
 < 2 \left(\dfrac{m}{\rho}\right)^3 -  m^3 \dfrac{2}{\rho^3} =0.  
\end{align*}
Thus, $u''(x)>u''(x) = 0,$ thus, $u'(x)<u'(0)= 0,$ and, thus, $u(x)>0$. Thus, $g_{\rho}(x)> g_{\rho}^{\mathsf{low}}(x)$ for all $- \dfrac{m-1}{m}\rho < x < 0.$ This finishes the proof.
\end{proof}

\subsection{Proof of Lemma \ref{Lem:ComputeYkj}.}
\label{Apd.Lem:ComputeYkj}
\begin{proof}
Using Lemma \ref{Lem:BasicProp_0},  we compute directly   
    \begin{align*}
    \mathbb{E} ({Y}_k^j) 
    &= \mathbb{E} \left[  \sum_{i=0}^j \begin{pmatrix}
    j \\
    i 
    \end{pmatrix} \big[ \alpha \Delta W_k \big]^i \left( \dfrac{\alpha^2}{2}(\Delta W_k)^2\right)^{j-i}   \right] = \mathbb{E} \left[  \sum_{i=0}^j \begin{pmatrix}
    j \\
    i 
    \end{pmatrix} \frac{\alpha^{2j-i}}{2^{j-i}} \Delta W_k^{2j-i}    \right]\\
    &= \mathbb{E} \left[  \sum_{i=0}^{\frac{1}{2}\jeven} \begin{pmatrix}
    j \\
    2i 
    \end{pmatrix}  \frac{\alpha^{2j-2i}}{2^{j-2i}} \Delta W_k^{2j-2i}    \right] =   \sum_{i=0}^{\frac{1}{2}\jeven} \begin{pmatrix}
    j \\
    2i 
    \end{pmatrix}  \frac{\alpha^{2j-2i}}{2^{j-2i}} [(2j-2i-1)!!] h^{j-i}    
\end{align*}
in which we note $j-i\ge j-\frac{1}{2}\jeven \ge 2$ for all $j\ge 3$. Here, we note that the above computations do not depend on $k$. 
Therefore, we imply that 
\begin{align*}
 \mathbb{E} \left({Y}_k -\dfrac{\alpha^2}{2} h \right)^m & =  \mathbb{E} \left[  \sum_{j=0}^m \begin{pmatrix}
    m \\
    j 
    \end{pmatrix} {Y}_k^j \left( -\dfrac{\alpha^2}{2} h \right)^{m-j}   \right]\\
    & = \sum_{j=0}^m \begin{pmatrix}
    m \\
    j 
    \end{pmatrix}   \Bigg[ \sum_{i=0}^{\frac{1}{2}\jeven} \begin{pmatrix}
    j \\
    2i 
    \end{pmatrix}  \frac{\alpha^{2j-2i}}{2^{j-2i}} [(2j-2i-1)!!] h^{j-i}  \Bigg ] \left( -\dfrac{\alpha^2}{2} h \right)^{m-j}\\ 
    & = \sum_{j=0}^m \begin{pmatrix}
    m \\
    j 
    \end{pmatrix}   \sum_{i=0}^{\frac{1}{2}\jeven} \begin{pmatrix}
    j \\
    2i 
    \end{pmatrix} (-1)^{m-j} \frac{\alpha^{2m-2i}}{2^{m-2i}} [(2j-2i-1)!!] h^{m-i} \\
    & =  \left\{
    \begin{array}{llll}
    0     & \text{if} \quad m=1, \\
    \alpha^2h + \dfrac{1}{2}\alpha^4h^2  & \text{if} \quad m=2, \\
    O(h^2)   & \text{if} \quad m \ge 3.
    \end{array}
    \right.
\end{align*}
This consequently shows the expectations of $\mathbb{E} (N_k^l)$ for $l=1,2,3$. 
\end{proof}

\subsection*{Acknowledgement}
The authors sincerely thank Dr. Bao-Ngoc Tran for his valuable and fruitful discussions.
This research was initiated during the visit of Hue Vu to University of Graz, which was supported by the ASEA-UNINET project number ASEA 2023-2024/Uni Graz/6. The hospitality of the university is acknowledged.

\section*{Declarations}


\subsection*{Competing Interests} The authors declare no competing interests.

\subsection*{Ethics approval} This study did not require ethics approval.

\subsection*{Availability of Data and Materials} No datasets were generated or analysed during the current study.

\subsection*{Author Contributions}
All authors contributed equally to this work. All authors have read and approved the final version of the manuscript.

\newcommand{\etalchar}[1]{$^{#1}$}

\end{document}